\DeclareMathAlphabet{\mathpzc}{OT1}{pzc}{L}{it} 
\newtheorem{definition}{Definition}[section]
\newtheorem{proposition}[definition]{Proposition}
\newtheorem{theorem}{Theorem}
\newtheorem{corollary}[definition]{Corollary}
\newtheorem{remark}[definition]{Remark}
\newtheorem{lemma}[definition]{Lemma}
\newtheorem{lemmata}[definition]{Lemmata}
\def\geq{\geqslant}
\def\leq{\leqslant}
\def\R{\mathbb{R}}
\def\T{\mathbb{T}}
\def\eps{\varepsilon}
\def\Z{\mathbb{Z}}
\def\N{\mathbb{N}}
\def\Q{\mathbb{Q}}
\newcommand{\bea}{\begin{eqnarray}}
  \newcommand{\eea}{\end{eqnarray}}
  \newcommand{\beab}{\begin{eqnarray*}}
  \newcommand{\eeab}{\end{eqnarray*}}
\renewcommand{\a}{\alpha}
  \newcommand{\be}{\begin{equation}}
  \newcommand{\ee}{\end{equation}}
  \newcommand{\RQ}{\R \setminus \Q}
\title{Multiple mixing for a class of conservative surface flows}
\author{Bassam Fayad and Adam Kanigowski}
\begin{document}
\baselineskip=14pt \maketitle

\newcommand{\cD}{\mathcal D}
\newcommand{\cE}{\mathcal E}

\newcommand{\carre}{\hfill $\Box$}

\begin{abstract} Arnol'd and Kochergin mixing conservative flows on surfaces stand as the main and almost only natural class of mixing transformations for which higher order mixing has not been established, nor disproved. 

Under suitable arithmetic conditions on their unique rotation vector, of full Lebesgue measure in the first case and of full Hausdorff dimension in the second, we show that these flows are mixing of any order. 
For this, we show that they display a generalization of the so called Ratner property on  slow divergence of nearby orbits, that implies strong restrictions on their joinings, that in turn yield higher order mixing. 

This is the first case in which the Ratner property is used to prove multiple mixing outside its original context of horocycle flows and we expect our approach will have further applications. 

\end{abstract}


\section{Introduction} A major open problem in ergodic theory is Rokhlin's question on whether mixing implies mixing of all orders, also called multiple mixing \cite{Ro}. In most of the known examples of mixing dynamical systems, multiple mixing is now known to hold. Moreover, a positive answer to Rokhlin's question is actually known to generally hold  within various classes of mixing dynamical systems. The most noteworthy are K-systems where multiple mixing always holds \cite{Co-Fo-Si},  horocycle flows \cite{Marcus}, mixing systems with singular spectrum that display multiple mixing by a celebrated theorem of Host \cite{Ho}, and finite rank systems since  Kalikow showed that rank one and mixing implies multiple mixing \cite{Kal}, a result that was extended to finite rank mixing systems by Ryzhikov \cite{Ryz}.

 In the second half of the last century, Arnol'd and Kochergin introduced  a  major class of conservative smooth mixing flows on surfaces, with non-degenerate saddle type singularities for the first  and degenerate ones for the second. Mixing of these flows was proved by Kochergin in the case of degenerate power like singularities (see exact description below) in \cite{Koc2}  and by Khanin and Sinai in a particular case of non-degenerate asymmetric saddle type singularities (see exact description below) \cite{SK}.  The study of the mixing properties of these flows has known a revival of interest since the beginning of the 2000's, with results such as the computation of the speed of mixing \cite{Fa} or extension of the Kahnin-Sinai mixing result to include all irrational translation vectors \cite{Koc7} (see also \cite{Koc4},\cite{Koc5}), or advances in the study of Arnol'd and Kochergin flows in the general case where the Poincar\'e section return map is an interval exchange and not just a circular rotation \cite{CU1,CU2,CU3}.

  Arnol'd and Kochergin flows stand today as the main and almost only natural class of mixing transformations for which higher order mixing has not been established, nor disproved. Our aim here is to prove mixing of all orders for a rich subclass of these systems determined by the arithmetics of their unique rotation vector. For this, we use the representation of Arnol'd and Kochergin  flows as special flows above an irrational rotation $R_\a$  of the circle  and under a ceiling function with asymmetric logarithmic singularities for the first, and integrable power like singularities for the second. 
     Loosely speaking our main result is as follows (it will be made precise at the end of this introduction, see Corollaries \ref{ln} and \ref{pow}). 
 
 \medskip 
 
\noindent  {\bf Theorem.} {\it Arnol'd-Khanin-Sinai flows are mixing of all orders for a  set of $\a \in (0,1)$ of full Lebesgue measure. Kochergin flows are  mixing of all orders for a  set of $\a \in (0,1)$ of full Hausdorff dimension. }

\medskip

 Similar mixing mechanisms due to orbit shear as in Kochergin and Arnol'd-Khanin-Sinai flows were observed 
   relatively recently such as in \cite{Fa2} or \cite{AFU} and it should be possible to apply the techniques  of the current paper to the study of higher order mixing for such {\it  parabolic }   systems.

 To explain our approach we need first to make a detour by Ratner's study of horocycle flows. In the 1980's, M.\ Ratner developed a rich machinery  to study horocycle flows \cite{Rat}-\cite{Rat3} and, in particular, singled out a special way of controlled slow divergence of orbits of nearby points which resulted in the notion of $H_p$-property, later  called  R-property by J.-P.\ Thouvenot \cite{JT}. This property, to which we will come back with more detail in the sequel, has important dynamical consequences, mainly expressed by a restriction on the possible joining measures of a system having the R-properties with other systems, and in particular with itself.

 A {\em joining} between two dynamical systems $(T,X,\mathscr{B},\mu)$ and  $(S,Y,\mathscr{C},\nu)$,  $(X,\mathscr{B},\mu)$ and   $(Y,\mathscr{C},\nu)$ being standard Borel probability spaces,  is a measure $\rho$ on $X \times Y$ invariant by $T \times S$ whose marginals on $X$ and $Y$ are $\mu$ and $\nu$. The definition for flows is similar. An important notion in Ratner's theory is that of  finite extension joinings (FEJ). 
\begin{definition}\label{fin}{\em An ergodic flow $(T_t)_{t\in\R}$  is said to have {\em FEJ-property}, acronym for {\em finite extension joining}, if for every ergodic flow $(S_t)_{t\in\R}$ acting on $(Y,\mathscr{C},\nu)$ and every ergodic joining $\rho$ of $(T_t)_{t\in\R}$ and $(S_t)_{t\in\R}$ different from the product measure $\mu \times \nu$, $\rho$ yields a flow which is a   finite extension of $(S_t)_{t\in\R}$.}
\end{definition}
It was shown  in  \cite{RT} that a mixing flow with  FEJ-property is mixing of all orders. Moreover, it was proved in \cite{Rat} that a flow with R-property has the FEJ-property. It follows that mixing flows with the R-property are mixing of all orders. Since the R-property for horocycle flows stemmed from polynomial shear  along the orbits, and since Kochergin flows displayed a similar polynomial shear along the orbits, the idea that special flows over rotations may enjoy the R-property, and thus be multiple mixing, was then suggested by J-P.\ Thouvenot in the 1990's (see p.\ 2 in \cite{Fr-Lem}).

However, whether natural classes of special flows (not necessarily mixing) over irrational rotations may have the R-property remained open until K.\ Fr\c{a}czek and M.\ Lema\'nczyk \cite{Fr-Lem,Fra-Lem} showed that a generalized R-property holds in some classes of special flows with roof functions of bounded variation (which, by \cite{Koc1}, are not mixing). More precisely, they have introduced a weaker notion than the R-property, called {\it weak Ratner} or WR-property  that however still implies the FEJ-property (see Definition \ref{wrs} and the comment after it) .

Unfortunately, in the mixing examples of special flows under piecewise convex functions with singularities such as Arnol'd and Kochergin flows,  the shear may occur very abruptly as orbits approach the singularity and this may prevent them from having the weak Rather property. Indeed, we believe that these flows do not have the WR-property.  That this should be true is corroborated  by the following result that shows that Kochergin flows, in the context  of bounded type frequency in the base  (that is {\it a priori} favorable  to controlling the shear), do not have the WR-property. 

\begin{theorem}\label{aby} Let $\alpha\in\T$ be irrational with bounded partial quotients and $f(x)=x^\gamma+r$, $-1<\gamma<0$, $r>0$. Then the special flow  above the circle rotation  $R_\a$  and under the ceiling function $f$ does not have the WR-property.
\end{theorem}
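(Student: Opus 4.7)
The proof strategy is to show that the Birkhoff-sum difference governing the relative shear of nearby orbits grows unboundedly with base-time, so that no finite set $P$ of compensating time shifts can match this shear on a positive density of times.

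First I would reduce the WR-property to a statement about Birkhoff sums: for two nearby initial points $(x,s)$ and $(x+\delta,s)$ in the special flow, the relative time displacement of their orbits at base-time $n$ is
\[
D_n(x,\delta)=S_n f(x+\delta)-S_n f(x),
\]
and any compensating time shift $q$ can bring the orbits $\varepsilon$-close at base-time $n$ only if $q\approx D_n(x,\delta)$. The WR-property with a finite compensating set $P=\{p_1,\ldots,p_k\}$ would therefore require that, for most pairs of nearby points, some $p\in P$ satisfy $D_n(x,\delta)\approx p$ on a long window of base-times.

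Next, I would analyze $D_n$ for $\alpha$ of bounded partial quotients. The increment $D_{n+1}-D_n=f(x+\delta+n\alpha)-f(x+n\alpha)$ is small at typical $n$ but acquires large "singular" values at two types of events: when $x+n\alpha\in(0,\delta)$, giving a large negative spike of magnitude $\asymp |x+n\alpha|^\gamma$ (the orbit of $(x,s)$ traverses a taller fiber than that of $(x+\delta,s)$), and when $x+n\alpha\in(1-\delta,1)$, giving a positive spike of comparable magnitude. These two types of spikes occur at different base-times and with distinct statistics. Using the three-distance theorem and the bounded partial quotient assumption, for $n\in[q_k,q_{k+1})$ with $q_k$ the denominators of $\alpha$, the closest approach of $\{x+j\alpha\}_{j<n}$ to $0$ lies at distance $\asymp 1/q_k$, contributing a dominant spike of size $\asymp q_k^{-\gamma}$; I would then argue that these spikes do not fully cancel across the orbit, producing fluctuations of $D_n$ of order at least $n^{-\gamma}$ on a set of $x$ of measure close to $1$.

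To conclude, fix any finite $P$ with $M_P=\max_{p\in P}|p|$ and $\varepsilon_0>0$. Since $|D_n(x,\delta)|$ is of order $n^{-\gamma}\to\infty$ on this set, there is a finite threshold $n^*=n^*(x,\delta,M_P)$ past which $|D_n(x,\delta)|>M_P+\varepsilon_0$ on a subset of $[n^*,N]$ of density tending to $1$; on this subset no $p\in P$ makes the orbits $\varepsilon_0$-close, so no single $p\in P$ can provide tracking on a positive density subset of base-times, contradicting WR for a positive-measure set of pairs $(x,x+\delta)$ with $\delta<\kappa$. The main technical obstacle is proving that the positive and negative singular spikes in $D_n$ do not precisely cancel, leaving residual fluctuations of size $\asymp n^{-\gamma}$. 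This requires a careful bookkeeping of the distribution and timing of close approaches of $\{x+j\alpha\}$ to $0^+$ and to $0^-$, and it is here that the bounded partial quotient hypothesis enters crucially, through quantitative control on the three-distance partition and Denjoy–Koksma type estimates adapted to the singular difference $f(\cdot+\delta)-f(\cdot)$.
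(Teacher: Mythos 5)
Your proposal identifies the right object — the Birkhoff-sum difference $D_n$ — and the right phenomenon — singular spikes coming from close returns of $x+n\alpha$ to $0$ — but the concluding step contains a genuine misreading of what the WR-property asserts, and this is precisely the difficulty the paper has to work around.

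The $R(t_0,P)$-property does \emph{not} demand that some fixed $p\in P$ track $D_n$ on a positive-density subset of all base times $[0,N]$; it demands only the existence of \emph{one} interval $[M,M+L]$ with $M,L>N$ and $L/M\geq\kappa$, where $M$ and $L$ are allowed to depend on the pair $(x,y)$ (and therefore, implicitly, on the size of $\delta=\|x-y\|$). Your argument — that $D_n\to\infty$ and eventually exceeds $\max|P|$, so no $p\in P$ works for large $n$ — does not rule out the possibility that there is a window of moderate $n$ (with $n>N$, since $\delta$ is chosen \emph{after} $N$ and can be taken very small) on which $D_n\approx p$ for an acceptable proportion. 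Since $D_n$ grows roughly like $\delta\,n^{1-\gamma}$ with occasional jumps, it must pass through the values in $P$ at some time scale $M\sim(|p|/\delta)^{1/(1-\gamma)}\gg N$, so the danger is real. The whole content of the theorem is that this passage through $P$ happens too abruptly to sustain an interval of relative length $\geq\kappa$.

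To make this precise the paper needs three ingredients that are absent from your proposal. First, a quantitative upper bound on the length of any integer interval on which $|f^{(n)}(x)-f^{(n)}(y)|$ stays below a fixed $\eta$ (Lemma \ref{len}): such an interval has length $\lesssim\eta^{1+\gamma}\|x-y\|^{-1/(1-\gamma)}$. Second, a deliberate construction of a positive-measure set $W_0$ and a carefully tuned perturbation $\delta_0\sim q_w^{\gamma-1}$ (Lemmata \ref{delta}, \ref{w0w}) so that, for $x\in W_0$ and $y=x+\delta_0$, the drift $f^{(n)}(x)-f^{(n)}(y)$ remains \emph{below} $\min|P|$ for all $n<i_0$, then at the first close approach $i_0$ it jumps \emph{above} $\max|P|$ in a single step and stays there up to time $wq_w$ — never landing in $P$ at all during this window. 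This uses compactness of $P\subset\R\setminus\{0\}$ so that $P$ fits inside an annulus $\{a\le|p|\le A\}$, together with the power-law singularity, which makes the jump at a single close return comparable to the full accumulated drift. Third, the possibility that the two orbits become $\epsilon$-close in the suspension at \emph{different} base indices — a shift $k\neq0$ in the circle iterate — must be addressed (the case $k\neq0$ in Proposition \ref{fini}); your reduction to "$D_n\approx p$" silently assumes the two points are compared at the same base index, and making that reduction rigorous (Proposition \ref{fan2}) also requires showing that the set of good $n$'s can be taken to be a full integer interval rather than an arbitrary $(1-\epsilon)$-proportion subset. The bounded-partial-quotients hypothesis enters exactly where you guess — controlling how often and how close the orbit approaches $0$ — but the argument it feeds is the jump-across-$P$ construction, not an unbounded-growth estimate.
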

We denote by $\T$ the circle $\R / \Z$. We refer to Section \ref{sec.swr.special} for the exact definition of special flows.
Theorem \ref{aby} has another consequence. It is known that every horocycle flow $(h_t)_{t\in\R}$ is loosely Bernoulli \cite{Rat3}; therefore, for every irrational $\alpha$, there exists a positive function in $f\in L^1(\T)$ such that $(h_t)_{t\in\R}$ is measurably isomorphic to $(T_t^f)_{t\in\R}$ \cite{Or-Ru-We}. It follows from \cite{Koc1} and the fact that $(h_t)_{t\in\R}$ is mixing that $f$ is of unbounded variation. Moreover, by~\cite{OS},  $f$ can be made $C^1$ except for one point. Since the R-property implies the WR-property and  the R-property is an isomorphism invariant, no special flow as in Theorem \ref{aby} is isomorphic to a horocycle flow. Actually, this line of thought can be extended to show that horocycle flows are never isomorphic to special flows above an irrational rotation and under a roof function that is convex and $C^2$ except at one point. For the latter result, one needs to introduce the concept of {\it strong Ratner property}, which is also an isomorphism invariant, that specifies the occurrence of slow divergence of nearby orbits to the first time when the orbits do split apart. This will be dealt with in a future work.  


To bypass Theorem~\ref{aby} and still use controlled  divergence of orbits to show multiple mixing, our approach will be to further weaken the WR-property. Namely, we introduce the SWR-property, which  stands for {\em switchable weak Ratner} property, that assumes that a pair of nearby points displays the WR-Property either under forward iteration in time or under backward iteration, and this depending on the pair of points. We show that the SWR-property is sufficient to guarantee the same FEJ consequences as the Ratner or the weak Ratner property.  Consequently, a mixing flow enjoying the SWR-property is mixing of all orders.

The main idea in showing that Arnold and Kocergin special flows may have the SWR-Property is the following. The main contribution to the shear between orbits is due to the visits of the flow lines to the neighborhood of the singularities.  
With the representation of these flows as special flows above irrational rotations, the shear is translated into the divergence between the Birkhoff sums of the roof functions for nearby points, and this divergence is mainly due to the visits under the base rotation to the neighborhoods of the points where the roof function has its singularities. If the base rotation angle $\a$ is of bounded type two nearby points will stay sufficiently far from the singularity either when they are iterated forward or when they are iterated backward.   
In the case of ceiling functions with only logarithmic singularities we are also able to exploit the progressive contribution to the shear of these visits to the singularities to obtain multiple mixing for a 
 full measure set of numbers $\a$. 

\bigskip

 We will now describe the ceiling functions that will be considered in the sequel and state our exact results on the SWR-property and multiple mixing.

\begin{definition}{\em Let $h$ be a positive function $h\in C^2(\T\setminus\{0\})$, decreasing on $(0,1)$, $\lim_{x\to 0^+}h(x)=+\infty$, $h'$ increasing on $(0,1)$. 
 Let $f\in C^2(\T\setminus\{a_1,....,a_k\})$ for some numbers $a_1,...,a_k\in \T$.  We say that $f$ has {\it singularities of type $h$} at $\{a_1,...,a_k\}$ if  
\begin{equation}\label{asu}\lim_{x\to a_i^+}\frac{f'(x)}{h'(x-a_i)}=A_i\;\; \text{and}\;\; \lim_{x\to a_i^-}\frac{f'(x)}{h'(a_i-x)}=-B_i,\end{equation}
for some numbers $A_i,B_i\geq 0$, $i=1,...,k$. }
\end{definition}

Notice that in this definition $h$ may only reflect a domination on the singularities of $f$ since the coefficients $A_i,B_i$ may be equal to zero at some or at all $i$'s.
 
In all the sequel, we will consider $\a \in \R \setminus \Q$ and let $(q_s)$ be the sequence of denominators of the best rational approximations of $\a$. Namely $(q_s)$ is the unique increasing sequence such that $q_0=1$ and $\|q_s \a\| <\|k\a\|$ for any $k <q_{s+1}$, $k\neq q_s$. We recall that 
\begin{equation} \label{red0} \frac{1}{q_s+q_{s+1}} \leq  \|q_s \a\| \leq \frac{1}{q_{s+1}} \end{equation}

Our results can deal with functions having several singularities but require a non resonance condition of these singularities with $\a$.

\begin{definition}\label{gen.pos}{\em[Badly approximable singularities]  Given $\a \in \R \setminus \Q$, we will say that  $\{a_1,....,a_k\}$ are {\it badly approximable by $\a$} if there exists $C>1$ such that for every $x\in \T$ and every $s\in \N$, 
there exists at most one $i_0\in \{0,...,q_s-1\}$ such that
\begin{equation}\label{basic}x+i_0\alpha\in \bigcup_{i=1}^k\left[\frac{-1}{2Cq_s}+a_i,a_i+\frac{1}{2Cq_s}\right].\end{equation}}
\end{definition} 

\begin{remark}{\em It was shown in \cite[Lemma 3]{Fr-Lem-Les} that if $a_i-a_j\in (\Q+\Q\alpha)\setminus(\Z+\Z\alpha)$ whenever $i\neq j$ then $\{a_1,....,a_k\}$  are badly approximable by $\a$.
}\end{remark}

Note that if there is only one singularity, that is $k=1$, then by \eqref{red0} it is always badly approximable by $\a$.  The following shows that for $k \geq 2$ the set of singularities that are  badly approximable by $\a$  is a thick set in $[0,1]^k$.

\begin{lemma}  Let $\a \in \R \setminus \Q$. For any $k \in \N$, the set  $E \subset [0,1]^k$ of $k-tuples$ $(a_1,\ldots,a_k)$ that are badly approximable by $\a$ is a product of sets of full Hausdorff dimension in $[0,1]$. \end{lemma}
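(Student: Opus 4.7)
The plan is to reduce the statement to a one-dimensional problem and then to assemble the product by taking disjoint subsets.

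Unwinding Definition~\ref{gen.pos}: by the triangle inequality and the lower bound $\|m\alpha\| \geq \|q_{s-1}\alpha\| > 1/(2q_s)$ for $0 < |m| < q_s$ (which disposes of the $i = j,\ m \neq 0$ case), condition~\eqref{basic} with constant $C \geq 3$ is equivalent to
\[
\|m\alpha - (a_j - a_i)\| > \frac{1}{Cq_s} \qquad \forall\, s \geq 1,\ \forall\, i \neq j,\ \forall\, |m| < q_s.
\]
Thus $(a_1,\ldots,a_k) \in E$ iff every nonzero pairwise difference $a_j - a_i$ belongs to the ``good set''
\[
G \;:=\; \bigcup_{C \geq 3}\Bigl\{ \beta \in \T : \|m\alpha - \beta\| > \tfrac{1}{Cq_s}\ \ \forall\, s \geq 1,\ \forall\, |m| < q_s \Bigr\}.
\]
So the lemma reduces to exhibiting a set $F \subset [0,1]$ of Hausdorff dimension one whose nonzero differences all lie in $G$: once such an $F$ is built, choosing $k$ pairwise disjoint closed subintervals $J_1,\ldots,J_k$ of $[0,1]$ and setting $E_i := F \cap J_i$ produces $k$ sets each of full Hausdorff dimension (the construction of $F$ below can be made uniform on $[0,1]$), while disjointness of the $J_i$'s ensures pairwise-distinct coordinates in $E_1 \times \cdots \times E_k \subset E$.

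Next, I would construct $F$ by a Cantor-like procedure along a sufficiently rapidly growing sequence $(s_n)_{n \geq 1}$ of best-approximation indices of $\a$. At step $n+1$, each surviving interval at step $n$ (of length $\ell_n$) is subdivided into candidate subintervals of length $\ell_{n+1}$ chosen so that $\ell_{n+1} < 1/(2q_{s_{n+1}})$. This choice guarantees that within a single cluster no nonzero $m\alpha$ with $|m| < q_{s_{n+1}}$ lies in the within-cluster difference range $[-\ell_{n+1}, \ell_{n+1}]$, so all intra-cluster constraints at level $s_{n+1}$ reduce to the (automatic) distinctness requirement. The non-trivial constraints concern inter-cluster centre differences, which must avoid a thickened bad set $B_{n+1} = \bigcup_{|m|<q_{s_{n+1}}} B(m\alpha, 2/(Cq_{s_{n+1}}))$ of Lebesgue density $O(1/C)$ in $\T$. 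By retaining a positive fraction of candidates at each step, iterating, and applying a standard mass-distribution argument, one obtains $F = \bigcap_n F_n$ with $\dim_H F = 1$ and $f_1 - f_2 \in G$ for every pair of distinct $f_1, f_2 \in F$.

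The main obstacle is the inter-cluster selection step: the pairwise-difference constraint couples candidates across distinct clusters, and a naive greedy bound on the ``bad-pair graph'' only yields an independent set of size $O(C)$, insufficient for full Hausdorff dimension. The resolution exploits the arithmetic rigidity of the orbit $\{m\alpha : |m| < q_{s_{n+1}}\}$ at scale $1/q_{s_{n+1}}$: inter-cluster positions can be chosen inductively so that each new level's cluster centres remain ``good'' at the current and all future scales, using the uniform distribution of $\{m\alpha\}$ provided by the three-distance theorem, together with a preliminary alignment of the centres on a sublattice not commensurable with $\a$. Combined with the rapid growth of $q_{s_n}$, the successive multiplicative losses in the number of survivors telescope to a factor bounded away from zero in the dimension formula, delivering $\dim_H F = 1$ and hence the lemma.
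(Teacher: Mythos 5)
Your reduction of Definition~\ref{gen.pos} to a condition on pairwise differences is sound and is also the starting point of the paper's argument, which uses the (slightly smaller but more convenient) set
$B(\alpha)=\{b:\exists C>0,\ \forall k\neq 0,\ \|k\alpha-b\|\geq \tfrac{1}{C|k|}\}$:
if every difference $a_i-a_j$ lies in $B(\alpha)$, then the tuple is badly approximable by $\alpha$. The paper then simply \emph{cites} the fact, due to Tseng (and Einsiedler--Tseng), that $B(\alpha)$ is a Schmidt winning set, and uses two standard properties of winning sets --- full Hausdorff dimension, and stability under finite intersections of translates --- to close the argument by a short induction on $k$: having chosen $a_1,\dots,a_l$ with pairwise differences in $B(\alpha)$, the admissible $a_{l+1}$ form $\bigcap_{s\leq l}(a_s+B(\alpha))$, which is again winning. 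No explicit Cantor construction is needed, and the combinatorial coupling between coordinates never has to be confronted directly.

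Your proposal instead aims to build a single Cantor set $F\subset[0,1]$ of full Hausdorff dimension whose \emph{entire difference set} $\{f_1-f_2:f_1\neq f_2\in F\}$ lies in $G$, and then take $E_i=F\cap J_i$ for disjoint intervals $J_i$. If carried out, this would actually prove a stronger statement than the paper (a genuine Cartesian product $E_1\times\cdots\times E_k\subset E$, rather than a nested sequence of choices), but the construction of $F$ is where the proof currently fails. You yourself identify the inter-cluster selection step as the main obstacle, and the suggested resolution --- three-distance theorem, ``preliminary alignment on a sublattice not commensurable with $\alpha$,'' telescoping multiplicative losses --- is a list of hoped-for ingredients rather than an argument. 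Concretely, at stage $n+1$ the forbidden relations come from all $|m|<q_{s_{n+1}}$, so each surviving candidate $f_1$ potentially forbids on the order of $q_{s_{n+1}}$ locations $f_1+m\alpha\pm O(1/(Cq_{s_{n+1}}))$ for $f_2$; keeping a proportion of candidates large enough that the dimension bound does not degrade, while respecting \emph{all} such constraints simultaneously across all pairs of clusters, is exactly the difficult part, and nothing in the sketch controls how the losses accumulate over infinitely many levels. Absent a precise combinatorial lemma replacing the hand-waving, the proof does not go through; you would be much better served by invoking the Schmidt-winning property of $B(\alpha)$ as the paper does, which packages precisely this Cantor-type construction into a citable black box.
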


\begin{proof} Define 
$$B(\a) := \{Ęb \in \R : \exists C>0, \forall k \in \Z \setminus 0 :  \| k \a - b\| \geq  \frac{1}{C |k|}\}$$
Then if  $(a_1,\ldots,a_k)$  are such that $a_i-a_j \in B(\a)$ for any $i,j \in \{1,\ldots,k\}$, $i\neq j$, then  $(a_1,\ldots,a_k)$ are
badly approximable by $\a$.

But it was proven in \cite{tseng} (see also \cite{einsiedler-tseng}) that the set $B(\a)$ is a winning set in the sense of Schmidt (see \cite{tseng,einsiedler-tseng} and references therein). A winning set is of full Huasdorff dimension. Moreover, for a winning set $B \in \R$ we have that for any $x_1,\ldots,x_n$ the set $\cap_{s=1}^n (x_s+B)$ is winning. So, if $a_1, \ldots,a_l$ are such that $a_i-a_j \in B(\a)$ for any $i,j \in \{1,\ldots,l\}$, $i\neq j$, then the set of $a \in [0,1]$ such that $a \in \cap_{s=1}^l (a_s + B(\a))$ is winning which means that $a_1, \ldots,a_l,a_{l+1}$ are  badly approximable  by $\a$ for a winning set of $a_{l+1}$. The statement of the Lemma follows then by induction and because a single $a_1$ is always badly approximable by $\a$. \end{proof}

Our results deal with two types of singularities. Theorem \ref{main} deals with logarithmic like singularities, while Theorem \ref{boun} deals with the case of at least one dominant power like singularities.

\subsection{Logarithmic like singularities} 

In the case of logarithmic like singularities, the following theorem holds.

\begin{theorem}\label{main}  Let $\a \in \R \setminus \Q$ and $f\in C^2(\T\setminus\{a_1,....,a_k\})$ with the singularities $\{a_1,\ldots,a_k\}$ {\it of type $h$} and badly approximable  by $\a$, with some associated constant $C>1$.  Assume that  $\sum_{i=1}^k A_i\neq \sum_{i=1}^k B_i$ and that there exist a constant $m_0>0$ and a sequence $(x_s)$ such that 
   for every $s\in\N$, we have $x_s < \frac{1}{q_s}$ and 
\begin{enumerate}

    \item  $\lim_{s\to +\infty} \frac{h'(\frac{x_s}{4C})}{q_sh(\frac{1}{2q_s})}=0, \quad \lim_{s\to +\infty}x_sq_sh(\frac{1}{2q_s})=+\infty$;
	 \item $\sum_{i\notin K_\alpha}q_ix_i<+\infty$, where $K_\alpha:=\{s\in \N\;:\; q_{s+1}< \frac{1}{x_s}\}$;
	 \item  $h(\frac{1}{2q_s})/h(\frac{1}{2q_{s+1}})>m_0$.
\end{enumerate}
Then  $(T_t^f)_{t\in \R}$ has the SWR-property.
\end{theorem}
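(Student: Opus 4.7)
The plan is to reduce the SWR-property for $(T_t^f)$ to a quantitative control of the Birkhoff sum differences $f^{(N)}(x)-f^{(N)}(y)$ for pairs $(x,y)$ of nearby points in the base circle, and then exploit the switchable time-direction to handle those pairs whose orbits visit the singularities. For $\epsilon>0$ fixed and $v=|x-y|$ in a small range, one needs to find $N=N(x,y)$ and a shift $p=p(x,y)$ such that, in \emph{either} forward or backward time, $|f^{(n)}(x)-f^{(n)}(y)-p|\leq\epsilon$ holds for $n$ in an interval of length at least $\kappa N$. The backbone of the estimate is the mean value formula
\[ f^{(n)}(x)-f^{(n)}(y)=\int_y^x (f')^{(n)}(t)\,dt, \]
modulo the contribution of the single orbit point closest to the singularity set $\{a_1,\ldots,a_k\}$.

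For $N=\ell q_s$ with $\ell\in\N$ moderate, the badly approximable hypothesis ensures that at most one among $\{x+j\alpha\}_{j=0}^{q_s-1}$ falls inside $\bigcup_i[a_i-\frac{1}{2Cq_s},a_i+\frac{1}{2Cq_s}]$. Outside of that single bad visit, a Denjoy--Koksma estimate for the BV part of $f'$ on intervals of length $1/q_s$ gives $(f')^{(q_s)}(t)\approx (A-B)\,q_s h(1/q_s)$, where $A=\sum_i A_i$ and $B=\sum_i B_i$, valid as long as $t$ remains at distance at least $x_s/(4C)$ from every $a_i$. The sequence $(x_s)$ is tuned to match this regime: the first part of condition (1) makes the bad-visit contribution $h'(x_s/4C)$ negligible compared with the main term $q_sh(1/2q_s)$, while the second part $x_sq_sh(1/2q_s)\to+\infty$ guarantees, when $v\sim x_s$, that the accumulated shear $Nv\cdot(A-B)\,q_sh(1/q_s)$ is large enough to produce an order-one target shift $p$. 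The asymmetry assumption $A\neq B$ is precisely what prevents this main term from cancelling.

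The switchable aspect enters when controlling the stability of $p$ over the interval $[N,(1+\kappa)N]$: a unit step in $n$ changes the Birkhoff sum difference by $f(x+n\alpha)-f(y+n\alpha)\approx v\,f'(x+n\alpha)$, which is small unless $x+n\alpha$ is within $v$ of a singularity. For a generic pair $(x,y)$, at least one of the directions --- forward $\{x+n\alpha\}_{n\geq 0}$ or backward $\{x-n\alpha\}_{n\geq 0}$ --- avoids any close encounter with a singularity before time $(1+\kappa)N$ at the critical scale $v$; along this admissible direction the SWR-property is verified. Condition (3), bounding $h(1/2q_s)/h(1/2q_{s+1})$ from below by the constant $m_0$, prevents a drastic change in the magnitude of the shear across successive approximation levels, so that the same shift $p$ remains valid throughout the whole target interval.

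The main obstacle will be the combinatorics across the continued-fraction levels. For $s\in K_\alpha$, where $q_{s+1}<1/x_s$, the analysis reduces to a single block of length $q_{s+1}$ and is essentially one-scale. For $s\notin K_\alpha$, where $q_{s+1}\geq 1/x_s$, pathologically close visits to singularities could in principle destroy the estimate; condition (2), $\sum_{s\notin K_\alpha}q_sx_s<+\infty$, together with a Borel--Cantelli argument, confines these pathologies to a small-measure set of $(x,y)$ that can be excluded from the large-measure set on which SWR is required. Quantitatively matching the choice of $s$ to the inter-point distance $v$ uniformly across these two regimes, while keeping all error terms strictly below $\epsilon$, is the technically delicate step where all three conditions (1)--(3) must simultaneously come into play.
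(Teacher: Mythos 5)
Your plan follows the paper's overall route -- reduce to the Birkhoff-sum criterion of Proposition~\ref{cocy}, apply Denjoy--Koksma to $f'$ with the closest orbit point to the singularity set isolated thanks to the badly approximable hypothesis, select the scale $s$ from the distance $v=\|x-y\|$, accumulate drift over multiples of $q_s$ until it lands in the target set $P$, and switch time direction as needed to keep both orbits clear of small neighbourhoods of the singularities -- but the quantitative bookkeeping that makes this plan actually close is off in a few places and would break a literal implementation.

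First, you posit ``$v\sim x_s$''; that is the wrong scale. The level $s$ is fixed by $\frac{1}{q_{s+1}h(1/2q_{s+1})}<v\leq\frac{1}{q_sh(1/2q_s)}$, and the second half of condition~(1), $x_sq_sh(1/2q_s)\to+\infty$, serves precisely to force $v\ll x_s/(4C)$, i.e.\ it ensures that $y$ stays out of the excluded $v_s$-neighbourhoods whenever $x$ does; it is not what produces an order-one drift. Second, the accumulated shear after $N=\ell q_s$ steps is of order $N\cdot v\cdot(A-B)\,h(1/2q_s)$: your expression ``$Nv\cdot(A-B)q_sh(1/q_s)$'' carries an extra factor of $q_s$ and, combined with the previous error, would not yield the right magnitude. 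Third, you assign to condition~(3) the job of keeping the shift $p$ stable over $[M,M+L]$, but in fact the bound $h(1/2q_s)/h(1/2q_{s+1})>m_0$ is what guarantees, together with $v>\frac{1}{q_{s+1}h(1/2q_{s+1})}$, that the accumulated drift reaches a fixed constant and hence lands in $P$ after at most $q_{s+1}/(8Cq_s)$ blocks of length $q_s$ (this is Part~a of the drift proposition). The stability over $[M,M+L]$ is a separate step: one must choose an intermediate level $m\leq s$ so that $\kappa n_0 q_s$ sits between $q_m$ and $q_{m+1}/(8C)$, re-verify the non-resonance conditions at level $m$ (again with a switchable forward/backward choice), and then apply the multi-scale Birkhoff-sum bound at that level. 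Fixing these three points brings your sketch into line with the paper's argument.
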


To describe a consequence of Theorem \ref{main} (see Corollary \ref{ln}), set $h(x)=-\ln(x),$ for $x\in (0,1]$.  

For $\a \in \RQ$, let $K_{\alpha}:=\{n\in \N\;:\;q_{n+1}<q_n\log^{\frac{7}{8}}(q_n)\}$.  
We then define in view of  $1.$ and  $2.$ of Theorem \ref{main} 
\begin{equation*}\label{mate}\mathcal{E}:=\left\{\alpha\in \T \setminus \Q \;:\; \sum_{i\notin K_\alpha}\frac{1}{\log^{\frac{7}{8}}q_i}<+\infty\right\}.\end{equation*}

To have 3. of Theorem \ref{main} it suffices to assume that $\a$ is Diophantine : for $\tau \geq 0$ define the set of Diophantine numbers $\a$ of exponent $\tau$ to be 
$$ DC(\tau) := \{\alpha\in \T \setminus \Q \::\: \ \exists_{r_{\alpha}\in \N} \ \;\forall_{n\in \N}, \ q_{n+1}<r_\alpha q^{1+\tau}_{n}\}.$$
The set of Diophantine numbers is then
$$ \mathcal D := \{\alpha\in \T \setminus \Q \::\: \exists \tau \geq 0, \  \a \in DC(\tau)\}.$$
An equivalent definition of $DC(\tau)$ is that for any $p,q\in \Z \times \N^*$ we have that $|\a-\frac{p}{q}|\geq \frac{C(\a)}{q^{2+\tau}}$ for some $C(\a)>0$. 

\begin{corollary}\label{ln} Consider $h(x)=-\log(x)$. Let $\a \in \cD \cap \cE$. Let $f\in C^2(\T\setminus\{a_1,....,a_k\})$ with the singularities $\{a_1,\ldots,a_k\}$ {\it of type $h$} and badly approximable  by $\a$, with some associated constant $C>1$.  Assume that  $\sum_{i=1}^k A_i\neq \sum_{i=1}^k B_i$. Then $(T_t^f)_{t\in\R}$ has the SWR-Property and is mixing of all orders. 

\end{corollary}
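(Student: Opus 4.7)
The plan is to derive Corollary \ref{ln} as a direct specialization of Theorem \ref{main}, followed by an application of the chain SWR $\Rightarrow$ FEJ $\Rightarrow$ multiple mixing, combined with the classical mixing theorem for asymmetric logarithmic singularities.

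I would first choose $x_s := 1/(q_s \log^{7/8} q_s)$, which satisfies $x_s<1/q_s$ for all $s$ large enough. This choice is tailored so that the set $\{s : q_{s+1} < 1/x_s\}$ appearing in hypothesis (2) of Theorem \ref{main} coincides with the set $K_\alpha$ defined just before the corollary. With $h(x) = -\log x$ one has $h'(x) = -1/x$ and $h(1/(2q_s)) = \log(2q_s)$, so both limits in condition (1) reduce, up to multiplicative constants, to $\log^{1/8} q_s \to \infty$, verifying (1). For condition (2), $q_s x_s = 1/\log^{7/8} q_s$, and the finiteness of the resulting sum is exactly the defining property of $\mathcal{E}$. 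For condition (3), the Diophantine hypothesis $\alpha \in DC(\tau)$ forces $\log q_{s+1} \leq \log r_\alpha + (1+\tau)\log q_s$, which yields a uniform lower bound $m_0>0$ on $\log(2q_s)/\log(2q_{s+1})$ for all large $s$.

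Once (1)--(3) are verified, Theorem \ref{main} delivers the SWR-property. To upgrade this to mixing of all orders, I would combine three ingredients: first, the flow is already mixing, since asymmetric logarithmic singularities with $\sum_i A_i \neq \sum_i B_i$ produce mixing over any irrational $\alpha$ by the Khanin--Sinai result \cite{SK} as extended in \cite{Koc7}; second, the SWR-property implies the FEJ-property, as announced in the introduction when SWR is defined; third, by \cite{RT}, a mixing flow with the FEJ-property is mixing of all orders.

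The verification is essentially mechanical once the right $x_s$ is selected. The only point requiring care is aligning the scaling so that condition (1) (which demands $x_s q_s \log q_s \to \infty$) and condition (2) (which is equivalent to $\alpha \in \mathcal{E}$) are \emph{both} satisfied without strengthening the hypothesis; the exponent $7/8$ in the definition of $\mathcal{E}$ is exactly the value making these two demands compatible. The genuine content, already absorbed into Theorem \ref{main}, is the proof of the SWR-property itself, so at the level of this corollary there is no serious obstacle beyond a careful matching of exponents.
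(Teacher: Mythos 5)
Your proposal is correct and takes essentially the same route as the paper: the paper's own proof of Corollary \ref{ln} simply chooses $x_s = 1/(q_s \log^{7/8} q_s)$, states that the hypotheses of Theorem \ref{main} are "easily checked," invokes Kochergin's mixing result, and concludes multiple mixing via Theorem \ref{cons} and the FEJ-property. You have done the same, merely spelling out the verification of conditions (1)--(3) that the paper leaves implicit.
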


\begin{proof} We take for $x_s$ the sequence $\frac{1}{q_s \log^{\frac{7}{8}}q_s}$ and easily check the hypothesis of Theorem \ref{main}. Therefore  $(T_t^f)_{t\in \R}$ has the SWR-property. On the other hand, it was shown in \cite{Koc4} that $(T_t^f)_{t\in \R}$ is mixing. Multiple mixing then follows from Theorem \ref{cons} and the FEJ-property. \end{proof}

Corollary \ref{ln} covers a set of full Lebesgue measure of rotation angles $\a$. Indeed, it is known that the set of Diophantine numbers $\mathcal D$ has full Lebesgue measure, and we will  prove in Appendix \ref{EE} the following result. Denote by $\lambda$ the Haar measure on $\T$.
\begin{proposition} \label{propEE} It holds that $\lambda(\mathcal{E})=1$.
\end{proposition}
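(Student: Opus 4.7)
The plan is to estimate the $L^1$-norm of the relevant sum on $\T$ and conclude via Tonelli's theorem. More precisely, I will show that
\[
\int_\T \sum_{i \notin K_\alpha} \frac{1}{\log^{7/8} q_i(\alpha)}\, d\lambda(\alpha) < \infty,
\]
which immediately yields finiteness of the integrand for $\lambda$-a.e.\ $\alpha$, i.e., $\lambda(\mathcal{E}) = 1$.

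First, I rephrase the defining condition in terms of partial quotients. Writing $\alpha = [0; a_1, a_2, \ldots]$, the recursion $q_{i+1} = a_{i+1} q_i + q_{i-1}$ together with $q_{i-1} < q_i$ shows that the event $i \notin K_\alpha$ is, up to an $O(1)$ correction in the threshold (irrelevant at the level of Lebesgue measure), the event $\{a_{i+1}(\alpha) \geq \log^{7/8} q_i(\alpha)\}$. This is the form I will estimate.

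Next, I invoke two standard facts from the metric theory of continued fractions. First, the universal Fibonacci lower bound $q_i \geq F_i \geq c_1 \phi^i$ gives $\log q_i(\alpha) \geq c_2 i$ for all $\alpha \in \T \setminus \Q$ and all $i$ large, so $1/\log^{7/8} q_i(\alpha) \leq C i^{-7/8}$ pointwise. Second, the uniform tail estimate
\[
\lambda\bigl(\{\alpha \in \T : a_{i+1}(\alpha) \geq t\}\bigr) \leq \frac{C}{t}
\]
holds for all $i \in \N$ and $t \geq 1$; this follows from invariance of the Gauss measure $d\mu_G = \frac{dx}{(1+x)\log 2}$ under the Gauss map together with its bounded equivalence to $\lambda$, or equivalently from bounded distortion of the Gauss map iterates on their cylinder sets.

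Combining these and using the pointwise inclusion $\{a_{i+1} \geq \log^{7/8} q_i\} \subseteq \{a_{i+1} \geq c_2^{7/8} i^{7/8}\}$, whose $\lambda$-measure is bounded by $C' i^{-7/8}$, I obtain
\[
\int_\T \frac{\mathbf{1}_{\{i \notin K_\alpha\}}}{\log^{7/8} q_i(\alpha)}\, d\lambda(\alpha) \leq C i^{-7/8} \cdot C' i^{-7/8} = \frac{C''}{i^{7/4}},
\]
which is summable since $7/4 > 1$. The argument is essentially routine; the only step requiring real care is the uniform tail bound on $a_{i+1}$, but this is classical and needs no new input. Note that the exponent $7/8$ enters only through the threshold $2 \cdot 7/8 > 1$, so in fact any exponent strictly greater than $1/2$ would suffice — the author's choice of $7/8$ leaves a comfortable margin.
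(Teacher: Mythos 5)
Your proof is correct, and it is genuinely different from --- and simpler than --- the paper's argument. The paper proves the result by introducing a quasi-independence estimate for the Gauss map (Lemma \ref{nize}: $\mu\bigl(T^{-k}(0,a)\cap T^{-l}(0,a)\bigr)\le C\,\mu((0,a))^2$), then runs a blocking argument on the windows $[n^2,(n+1)^2]$: pairwise quasi-independence bounds the measure of the event that two or more bad indices fall in a single block, Borel--Cantelli shows this happens only finitely often almost surely, and the one-bad-index-per-block structure is then fed into the sum to conclude convergence. Your route bypasses all of this: you bound the first-moment $\int_\T \sum_{i\notin K_\alpha}\log^{-7/8}q_i\,d\lambda$ directly, using only the deterministic Fibonacci lower bound $\log q_i\gtrsim i$ (which controls both the summand and, via the recurrence $q_{i+1}=a_{i+1}q_i+q_{i-1}$, the threshold defining $K_\alpha$) together with the one-variable tail bound $\lambda(a_{i+1}\ge t)\lesssim 1/t$, and then invoke Tonelli. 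The bound $\int\le C\,i^{-7/8}\cdot C'\,i^{-7/8}=C''i^{-7/4}$ is the correct factorisation of "sup of the integrand on the bad set" times "measure of the bad set," and $\sum i^{-7/4}<\infty$ closes the argument.

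Two remarks on the comparison. First, your observation at the end is sharp and in fact shows your method is strictly more robust than the paper's: the first-moment argument needs only the exponent $\beta$ in $\log^\beta$ to satisfy $2\beta>1$, i.e.\ $\beta>1/2$, whereas the paper's blocking argument requires $\mu(B_n)\lesssim n^{2-4\beta}$ to be summable, i.e.\ $\beta>3/4$. Both cover $\beta=7/8$, but yours gives more room. Second, what the paper's approach buys --- and yours does not --- is a slightly more quantitative conclusion: it controls the bad indices blockwise and produces an explicit threshold $N_0(\alpha)$ past which at most one bad index occurs per block; this is more than the statement needs, but it is the natural output of a Borel--Cantelli argument. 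For the Proposition as stated, your Tonelli argument is shorter, requires no quasi-independence lemma, and is entirely adequate.
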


\subsection{Power like singularities}

Now, we will deal with power like singularities. We suppose   $f\in C^2(\T\setminus\{a_1,....,a_k\})$ with singularities $\{a_1,....,a_k\}$ of type $h$. We divide the set $\{a_1,...,a_k\}$ of singularities into two subsets : $F$ the set of weak singularities, and $E$ the set of  strong singularities of type not less than $h$ : namely, we suppose  $F=\{a_1,...,a_v\}$ and $E=\{a_{v+1},...,a_k\} \in E$ are such that,   $A_i^2+B_i^2>0$ in \eqref{asu} for $i \in \{v+1,\ldots, k \}$, while each $a_i\in F$ is a singularity for $f$ of type $g_i$ with $g_i$ a positive function in $ C^2(\T\setminus\{0\})$, decreasing on $(0,1)$ with $g_i'$ increasing and such that
\begin{equation}\label{gi}\lim_{x\to a^+_i}\left|\frac{f'(x)}{g_i'(x-a_i)}\right|\;\;\;\text{and}\;\;\;\lim_{x\to a^-_i}\left|\frac{f'(x)}{g'(a_i-x)}\right|\;\;\text{exist and are finite},\end{equation}
$\lim_{x\to 0^+}\frac{g_i(x)}{h(x)}=0$ and for every $s\in \N$ there exists $x_{i,s}\in \T$, $x_{i,s}>\frac{1}{q_sh(\frac{1}{2q_s})}$, such that $\lim_{s\to +\infty}\frac{g_i'(x_{i,s})}{h'(\frac{1}{2q_s})}=0$ and $\sum_{i=1}^{+\infty}q_sx_{i,s}<+\infty$.

We always assume that $E$ is not empty.


\begin{theorem}\label{boun} Let $\alpha$ be irrational with bounded partial quotients, that is, $\a \in DC(0)$. Assume that $\{a_{v+1},...,a_k\}$ are badly approximable  by $\a$ with some constant $C >1$. 
 Assume that there exist constants $D_1,D_2>0$ such that
for every $s\in\N$
\begin{equation}\label{dd}
D_2>\frac{-h'(\frac{1}{C^4q_s})}{q_sh(\frac{1}{2q_s})}>D_1\;\;\;\text{and}\;\;\; \frac{h(\frac{1}{2q_s})}{h(\frac{1}{2q_{s+1}})}>D_1.
\end{equation}
Then $(T_t^f)_{t\in \R}$ has the SWR-property.
\end{theorem}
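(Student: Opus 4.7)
The plan is to verify the SWR-property directly via quantitative analysis of the Birkhoff sums of $f$, following the general strategy used for Theorem \ref{main} but adapting it to the stronger power-like singularities. For nearby base points $x$ and $y=x+\delta$, the shear between the lifted trajectories in the special flow is governed by
\[f^{(n)}(y)-f^{(n)}(x)\;=\;\delta\cdot (f')^{(n)}(\xi_n),\]
where $(f')^{(n)}(\xi):=\sum_{i=0}^{n-1}f'(\xi+i\alpha)$ and $\xi_n$ is supplied by the mean value theorem on the smooth pieces. The goal is to identify, for a well-chosen scale $s$ and time window, a direction (forward or backward) in which this quantity grows in a controlled, nearly linear fashion of the size required by the SWR-property.

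Given small $\delta$, I would fix $s=s(\delta)$ so that $\delta$ is comparable to $1/(q_sh(\tfrac{1}{2q_s}))$, making the accumulated shear over $\sim q_s$ iterates of order one. Bounded partial quotients gives $q_{s+1}\leq Mq_s$, so all natural scales near $1/q_s$ are comparable; by the three-gap theorem, $\{\xi+i\alpha\}_{i=0}^{q_s-1}$ is $1/q_s$-equidistributed, so the $\tfrac{1}{2Cq_s}$-neighbourhood of each strong singularity $a_j\in E$ is visited by at most one iterate per block of length $q_s$, and the badly approximable assumption \eqref{basic} forces this ``critical visit'' to be common to all $a_j$ in each block. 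Outside the critical zone, monotonicity of $h'$ bounds the contribution of the remaining $q_s-1$ iterates by $q_s|h'(\tfrac{1}{2Cq_s})|$, which by the first inequality in \eqref{dd} is of order $q_s^2 h(\tfrac{1}{2q_s})$. The weak singularities in $F$ contribute negligibly: $g_i'(x_{i,s})/h'(\tfrac{1}{2q_s})\to 0$ controls the smooth part and $\sum_s q_s x_{i,s}<\infty$ disposes of the exceptional visits by Borel--Cantelli. The switchability is the essential new mechanism here: each block carries a single abrupt contribution from the critical visit of magnitude up to $\delta\cdot|h'(\tfrac{1}{C^4 q_s})|$, which would destroy a one-sided WR estimate; but this event is localised in time, so for every pair $(x,y)$ one can choose either forward or backward iteration so that the window $[N,LN]$ witnessing the SWR condition avoids critical visits, and inside such a window the shear is driven by smooth contributions alone and is approximately linear at rate $\sim h(\tfrac{1}{2q_s})$. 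The second inequality in \eqref{dd}, $h(\tfrac{1}{2q_s})/h(\tfrac{1}{2q_{s+1}})>D_1$, ensures this rate is stable across consecutive scales.

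The main technical obstacle I anticipate is the uniform construction of the SWR parameters $N,L,\delta,\kappa$ for a positive-measure set of pairs $(x,y)$ at every small $\delta$: one must show that an admissible time direction avoiding the critical visit on a full-density subset of $[N,LN]$ always exists, and that the resulting shear is both of the correct size and monotone enough to produce the required shift in the special flow. The bounded partial quotient hypothesis makes the critical visit times sparse and near-arithmetic with spacing $\sim q_s$, which should give the needed combinatorial room, and the upper bound $D_2$ on $-h'(\tfrac{1}{C^4 q_s})/(q_s h(\tfrac{1}{2q_s}))$ rules out ``super-abrupt'' shear from the critical visits themselves, so that the cost of missing an optimal direction is only a multiplicative constant.
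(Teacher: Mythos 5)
Your high-level scheme — reduce to Birkhoff sum estimates via the criterion for special flows (Proposition \ref{cocy}), then exploit the badly-approximable hypothesis so that in at least one time direction the super-close visits to the strong singularities are absent, and finally use \eqref{dd} to control what remains — is the right skeleton. But there are two concrete gaps. First, your bound on the non-critical contribution is off by a full factor of $q_s$: estimating the $q_s-1$ remaining terms by $q_s|h'(\tfrac{1}{2Cq_s})|$ gives, via \eqref{dd}, a quantity of order $q_s^2h(\tfrac{1}{2q_s})$, so that after multiplying by $\|x-y\|\sim 1/(q_sh(\tfrac{1}{2q_s}))$ the resulting shear would be of order $q_s$, i.e.\ unbounded. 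The correct estimate (Lemma \ref{koksi} and the first Lemmata in the proof of Theorem \ref{boun}) uses the Denjoy--Koksma inequality applied to $h'$ with the closest term removed, giving a Birkhoff sum of order $q_sh(\tfrac{1}{2q_s})$ — not $q_s|h'(\tfrac{1}{2Cq_s})|$ — and hence a shear of order one. Without Denjoy--Koksma your upper bound is useless.

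Second, you never say where the drift $p\in P$ actually comes from, and the picture you give (``inside such a window the shear is driven by smooth contributions alone and is approximately linear at rate $\sim h(\tfrac{1}{2q_s})$'') conflates the mechanism of Theorem \ref{main} with that of Theorem \ref{boun}. For power-like singularities the drift does not accumulate linearly over $q_s$-blocks; the paper instead locates a specific time $i_0\in[q_{s-4},q_{s-2}-2]$ at which $x+i_0\alpha$ lies within $1/q_{s-4}$ of the dominant singularity $a_k$ but, thanks to the chosen direction, outside its $1/(2Cq_s)$-neighbourhood. That single medium-close visit produces a jump of order $D_1^2$ in the Birkhoff-sum difference, and combined with the $O(D_2)$ upper bound from Denjoy--Koksma one concludes that $f^{(i_0)}(x)-f^{(i_0)}(y)$ or $f^{(i_0+1)}(x)-f^{(i_0+1)}(y)$ lands in $P$. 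The SWR window is then placed starting from $T^{i_0+1}$ forward or from $T^{i_0}$ backward (a second switching choice depending on which of the two times lands in $P$), and over that window of length $\kappa i_0$ the Birkhoff-sum difference is shown to move by less than $\epsilon$, i.e.\ it is nearly \emph{constant}, not linearly growing. Identifying the visit at $i_0$, obtaining the matching lower bound from \eqref{jnm} together with \eqref{dd} applied at two consecutive scales, and then checking stability of the drift after $i_0$ are the substantive steps your proposal leaves out.
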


\begin{corollary}\label{pow} Let $\a \in DC(0)$. Let $f\in C^2(\T\setminus\{a_1,....,a_k\})$ with all the singularities $\{a_1,....,a_k\}$ of power-like type $x^{\gamma_i}$ from the left and   $x^{\delta_i}$ from the right, $-1<\gamma_i,\delta_i<0$. Let $\gamma=\min_{1\leq i\leq k}\{\gamma_i,\delta_i\}$, $E=\{a_i\;:\; \min \{\gamma_i,\delta_i\}=\gamma\}$. Then, if the points in $E$ are badly approximable by $\a$, we have that  $(T_t^f)_{t\in \R}$ has the SWR-Property and is mixing of all orders. 
 \end{corollary}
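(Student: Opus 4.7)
The plan is to deduce Corollary \ref{pow} from Theorem \ref{boun} by setting $h(x) := x^\gamma$ and carefully matching the hypothesis. First note that on $(0,1)$ this $h$ is positive, decreasing, and has $h'$ increasing (since $-1<\gamma<0$ gives $\gamma-1<0$). The asymptotics $f(x)\sim (x-a_i)^{\delta_i}$ as $x\to a_i^+$ and $f(x)\sim (a_i-x)^{\gamma_i}$ as $x\to a_i^-$ yield to leading order
\[
\frac{f'(x)}{h'(x-a_i)}=\frac{\delta_i}{\gamma}(x-a_i)^{\delta_i-\gamma},\qquad \frac{f'(x)}{h'(a_i-x)}=-\frac{\gamma_i}{\gamma}(a_i-x)^{\gamma_i-\gamma}.
\]
Hence in \eqref{asu} the pair $(A_i,B_i)$ is $(1,1)$, $(1,0)$ or $(0,1)$ for $a_i\in E$, giving $A_i^2+B_i^2>0$ exactly for the strong singularities, while $A_i=B_i=0$ for $a_i\in F$. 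The hypothesis that $E$ is badly approximable by $\a$ is assumed in the corollary.

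Next, for each $a_i\in F$ set $\eta_i:=\min(\gamma_i,\delta_i)\in(\gamma,0)$ and take $g_i(x):=x^{\eta_i}$; this is the decreasing $C^2$ function with $g_i'$ increasing that controls the weaker singularity. The limits in \eqref{gi} exist and are finite (they are $|\delta_i/\eta_i|$ or $|\gamma_i/\eta_i|$ on the dominant side and $0$ on the strictly weaker side), and $\lim_{x\to 0^+} g_i(x)/h(x)=\lim_{x\to 0^+}x^{\eta_i-\gamma}=0$. For the required sequence $(x_{i,s})$, choose $\beta_i$ with $1<\beta_i<(1-\gamma)/(1-\eta_i)$, which is possible precisely because $\eta_i>\gamma$, and set $x_{i,s}:=q_s^{-\beta_i}$. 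Since $\a\in DC(0)$ forces the denominators $q_s$ to grow geometrically, $\sum_s q_s x_{i,s}=\sum_s q_s^{1-\beta_i}$ converges. The upper bound on $\beta_i$ gives $g_i'(x_{i,s})/h'(1/(2q_s))\sim q_s^{\beta_i(1-\eta_i)-(1-\gamma)}\to 0$, and one checks $x_{i,s}\geq q_s^{-\beta_i}>2^\gamma q_s^{\gamma-1}=1/(q_sh(1/(2q_s)))$ for $s$ large.

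Condition \eqref{dd} of Theorem \ref{boun} is now a computation with powers: $-h'(1/(C^4q_s))/(q_sh(1/(2q_s)))$ collapses to the $s$-independent constant $-\gamma C^{4(1-\gamma)}2^{\gamma}$, lying between some $D_1,D_2>0$; and $h(1/(2q_s))/h(1/(2q_{s+1}))=(q_s/q_{s+1})^\gamma$ is bounded below because $\a\in DC(0)$ makes $q_{s+1}/q_s$ bounded. All hypotheses of Theorem \ref{boun} are thus verified, so $(T^f_t)_{t\in\R}$ has the SWR-property. Mixing of $(T^f_t)_{t\in\R}$ in the power-like case is the classical result of Kochergin \cite{Koc2}, and combining mixing with the SWR-property (which yields FEJ) gives mixing of all orders via Theorem \ref{cons}, exactly as in the proof of Corollary \ref{ln}.

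The only real subtlety I foresee is the simultaneous realization of the three constraints on $x_{i,s}$: a lower bound of order $q_s^{\gamma-1}$, an upper bound of order $q_s^{-\beta_i}$ with $\beta_i>1$ dictated by summability, and the decay $g_i'(x_{i,s})/h'(1/(2q_s))\to 0$. These coexist only because $\eta_i>\gamma$ forces $(1-\gamma)/(1-\eta_i)>1$, leaving a nonempty window for $\beta_i$. Everything else is a matter of comparing exponents, so once the correct $h$ and $g_i$ are identified the verification is purely algebraic.
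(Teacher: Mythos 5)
Your proof is correct and takes essentially the same route as the paper: identify $h(x)=x^{\gamma}$, check that the strong singularities in $E$ give $A_i^2+B_i^2>0$, identify the weak ones via $g_i(x)=x^{\eta_i}$, and verify the hypotheses of Theorem~\ref{boun}. The only difference is the auxiliary sequence: the paper takes the single choice $x_s=\frac{1}{s^2 q_s}$ (summability of $\sum q_s x_s$ is then immediate, and the decay $g_i'(x_s)/h'(1/(2q_s))\to 0$ follows from the exponential growth of $q_s$ beating the polynomial factor $s^{2(1-\eta_i)}$), whereas you take $x_{i,s}=q_s^{-\beta_i}$ with $1<\beta_i<(1-\gamma)/(1-\eta_i)$; both work, and your choice makes the role of the strict inequality $\eta_i>\gamma$ a bit more transparent, but this is a minor technical variant within the same reduction.
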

 Note that there are no combinatorial assumptions on the weak singularities $a_j\notin E$.

\begin{proof} [Proof of Corollary \ref{pow}] Take $x_s=\frac{1}{s^2 q_s}$ and easily check the hypothesis of Theorem \ref{boun}. This gives the SWR-Property . Mixing of $(T_t^f)_{t\in \R}$ was established in \cite{Koc2}. Multiple mixing then follows from Theorem \ref{cons} and the FEJ-property. \end{proof}

\subsection{Plan of the paper} In Section \ref{sec.swr} we introduce the SWR-Property and study its joinings consequences. In Section \ref{sec.swr.special} we give a criterion involving the Birkhoff sums of the ceiling function that guarantees that a special flow above an isometry has the SWR-property. The treatment of these sections is similar to \cite{Fr-Lem,Fra-Lem}. In Section \ref{sec.proofs} we study the Birkhoff sums of logarithmic like and power like functions and prove Theorems \ref{main} and \ref{boun}. Section \ref{abs} is devoted to the proof of Theorem \ref{aby} on the absence of the SWR-Property for a subcalss of Kochergin flows. Finally Appendix \ref{App.A} is devoted to the proof that the set of frequencies for which Theorem \ref{main} holds has full Lebesgue measure.

\bigskip

\noindent{\bf Acknowledgments.}

\indent The second author would like to thank Professor Mariusz Lema\'nczyk for all his patience, help and deep insight.
The authors would also like to thank Krzysztof Fr\c{a}czek, Mariusz Lema\'nczyk and Jean-Paul Thouvenot for valuable discussions on the subject.\\

\indent The results of Section \ref{sec.proofs} have been obtained by the two authors independently and the results of Section \ref{abs} by the second. The two authors decided to include Section \ref{abs} in this work because it is an integral part of the problems concerning Ratner's property for this class of special flows.

\section{The SWR- property} \label{sec.swr}
Let $(X,\mathscr{B},\mu)$ be a probability standard Borel space. We additionally assume that $X$ is a complete metric space with a metric $d$. Let $(T_t)_{t\in\R}$ be an ergodic flow acting on $(X,\mathscr{B},\mu)$.
\begin{definition}[cf.\ \cite{Fra-Lem}, Definition 4] \label{wrs} {\em Fix $t_0\in \R_+$ and a compact set $P\subset \R\setminus \{0\}$. One says that the flow has the {\em switchable} $R(t_0,P)$-property if for every $\epsilon>0$ and $N\in \N$ there exist $\kappa=\kappa(\epsilon)$, $\delta=\delta(\epsilon,N)$ and a set $Z=Z(\epsilon,N)\subset \mathscr{B}$ with $\mu(Z)>1-\epsilon$ such that for any $x,y\in Z$ with $d(x,y)<\delta$, $x$ not in the orbit of $y$ there exist $M=M(x,y),L=L(x,y)\in \N$ with $M,L>N$ and $\frac{L}{M}\geq \kappa$ and $p=p(x,y)\in P$ such that

\begin{equation}\label{fir}
\frac{1}{L}\big|\{n\in[M,M+L]: d(T_{nt_0}(x),T_{nt_0+p}(y))<\epsilon\}\big|>1-\epsilon\end{equation}
\begin{center}
    or
\end{center}
\begin{equation}\label{sec}\frac{1}{L}\left|\{n\in[M,M+L]: d(T_{n(-t_0)}(x),T_{n(-t_0)+p}(y))<\epsilon\}\right|>1-\epsilon.\end{equation}

If the set of $t_0>0$ such that the flow $(T_t)_{t\in\R}$ has the switchable $R(t_0,P)$-property is uncountable, the flow is said to have {\em SWR}-property.}
 \end{definition}

For the sake of completeness, compare the SWR-property with the definition of the WR-property \cite{Fra-Lem}. To have WR-property, we fix $P\subset \R\setminus \{0\}$ and $t_0\in \R$. $(T_t^f)_{t\in\R}$ has $R(t_0,P)$ property if in Definition \ref{wrs}, (\ref{fir}) holds (the condition (\ref{sec}) is not taken into account) and $(T_t^f)_{t\in\R}$ has WR-property if the set of $t_0\in \R$ such that $(T_t^f)_{t\in\R}$ has $R(t_0,P)$ property is uncountable. Consequently, SWR-property is weaker than WR-property (and as Theorem~\ref{aby} shows, it is strictly weaker).

Now, again for sake of completeness, we will present a detailed proof (using some facts proved in \cite{Fra-Lem}) of the fact under the ``continuty'' assumption on orbits (see below) that SWR-property has FE-property as  the original $H_p$-property introduced by M.\ Ratner \cite{Rat}.

 We will state a lemma which is a simple consequence of Lemma 5.2. in \cite{Fra-Lem}.

\begin{lemma}\label{bet} Let $T,S:(X,\mathscr{B},\mu)\to (X,\mathscr{B},\mu)$ be two ergodic automorphisms and let $A\in\mathscr{B}$. For any $\epsilon,\delta,\kappa>0$ there exist $N=N(\epsilon,\delta,\kappa)$ and a measurable set $Z=Z(\epsilon,\delta,\kappa)$ with $\mu(Z)>1-\delta$ such that for any $M,L\geq N$ with $\frac{L}{M}\geq \kappa$ and any $x\in \Z$ we have
$$\left|\frac{1}{L}\sum_{i=M}^{M+L}\chi_A(T^ix)-\mu(A)\right|<\epsilon$$
and
$$\left|\frac{1}{L}\sum_{i=M}^{M+L}\chi_A(S^ix)-\mu(A)\right|<\epsilon.$$
\end{lemma}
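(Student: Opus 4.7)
The plan is to deduce this simultaneous uniform Birkhoff-type statement for $T$ and $S$ from the pointwise ergodic theorem applied to each automorphism separately, combined with Egorov's theorem to upgrade the a.e.\ convergence to uniform convergence on a set of large measure. First I would apply Birkhoff's ergodic theorem to $(T,\chi_A)$ to obtain $\frac{1}{n}\sum_{i=0}^{n-1}\chi_A(T^ix)\to \mu(A)$ for $\mu$-a.e.\ $x$, and then use Egorov's theorem to produce a set $Z_T\in\mathscr{B}$ with $\mu(Z_T)>1-\delta/2$ and an integer $N_T=N_T(\eta,\delta)$ such that
\[
\left|\frac{1}{n}\sum_{i=0}^{n-1}\chi_A(T^ix)-\mu(A)\right|<\eta \quad \text{for all } x\in Z_T \text{ and all } n\geq N_T,
\]
where $\eta=\eta(\epsilon,\kappa)>0$ is a small parameter fixed below. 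Do the same for $S$ to get $Z_S$ and $N_S$, and set $Z:=Z_T\cap Z_S$, $N:=\max(N_T,N_S)$, so that $\mu(Z)>1-\delta$.

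Next I would express the Cesaro sum on the block $[M,M+L]$ as a combination of two full initial Birkhoff averages, writing $B_n(x):=\frac{1}{n}\sum_{i=0}^{n-1}\chi_A(T^ix)$, so that
\[
\frac{1}{L}\sum_{i=M}^{M+L}\chi_A(T^ix) = \frac{M+L+1}{L}B_{M+L+1}(x) - \frac{M}{L}B_M(x).
\]
A direct algebraic rearrangement gives
\[
\frac{1}{L}\sum_{i=M}^{M+L}\chi_A(T^ix) - \mu(A) = \frac{M+L+1}{L}\bigl(B_{M+L+1}(x)-\mu(A)\bigr) - \frac{M}{L}\bigl(B_M(x)-\mu(A)\bigr) + \frac{\mu(A)}{L}.
\]
The hypothesis $L/M\geq\kappa$ implies $M/L\leq 1/\kappa$ and $(M+L+1)/L\leq 1+1/\kappa+1/L$, so both prefactors are bounded by $K_\kappa:=2+1/\kappa$. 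For $x\in Z$ and $M,L\geq N$, the Egorov estimate controls both Birkhoff averages by $\eta$, hence
\[
\left|\frac{1}{L}\sum_{i=M}^{M+L}\chi_A(T^ix) - \mu(A)\right| \leq 2K_\kappa \eta + \frac{\mu(A)}{L}.
\]
Choosing $\eta$ so that $2K_\kappa\eta<\epsilon/2$ and enlarging $N$ so that $\mu(A)/N<\epsilon/2$ yields the desired bound for $T$, and the identical argument for $S$ on the same set $Z$ gives the inequality for $S$.

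The lemma is essentially a two-system Egorov-plus-Birkhoff exercise, and the only mild point requiring attention is that the assumption $L/M\geq\kappa$ provides a lower bound but no upper bound on $L/M$; nevertheless, as $L\to\infty$ the coefficient $(M+L+1)/L$ tends to $1$, so the prefactors remain uniformly bounded by $K_\kappa$. No genuine obstacle arises, which matches the authors' description of the lemma as ``a simple consequence of Lemma 5.2.\ in \cite{Fra-Lem}.''
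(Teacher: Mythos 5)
Your argument is correct, and it is essentially the standard Birkhoff-plus-Egorov block-averaging proof that the paper implicitly invokes by citing Lemma~5.2 of Fr\c{a}czek--Lema\'nczyk: one proves the single-automorphism version (uniform convergence on a large-measure set by Egorov, then the telescoping decomposition $\frac{1}{L}\sum_{i=M}^{M+L}\chi_A(T^ix) = \frac{M+L+1}{L}B_{M+L+1}(x) - \frac{M}{L}B_M(x)$ with the prefactors controlled by $L/M\geq\kappa$), and applies it once to $T$ and once to $S$, taking the intersection of the resulting sets and the maximum of the thresholds. The algebra, the bound $K_\kappa = 2 + 1/\kappa$, and the handling of the residual term $\mu(A)/L$ are all correct.
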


We will add one more natural condition on the flow $(T_t)_{t\in\R}$ which can be viewed as ``continuity'' on orbits. The flow $(T_t)_{t\in\R}$ is called {\em almost continuous} \cite{Fra-Lem} if for every $\epsilon>0$ there exists a set $X:=X(\epsilon)$ with $\mu(X)>1-\epsilon$ such that for every $\epsilon'>0$ there exists $\delta'>0$ such that for every $x\in X$, we have $d(T_t(x),T_{t'}(x))<\epsilon'$ for $t,t'\in [-\delta,\delta]$.

For the definition and  properties of joinings, we refer the reader to \cite{JT} or \cite{Gl}. Our goal is now to prove the following result.
\begin{theorem}\label{cons} Let $(T_t)_{t\in\R}$ be a weakly mixing flow acting on a probability standard Borel space $(X,\mathscr{B},\mu)$. Assume that $(T_t)_{t\in\R}$ satisfies the SWR-property. Let $(S_t)_{t\in\R}$ be an ergodic flow acting on a probability standard Borel space $(Y,\mathscr{C},\nu)$ and let $\rho\in J((T_t)_{t\in\R},(S_t)_{t\in\R})$ be an ergodic joining. Then either $\rho$ is equal to $\mu\otimes\nu$ or is a finite extension of the measure $\nu$.
\end{theorem}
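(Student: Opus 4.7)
My plan for Theorem \ref{cons} is to follow the dichotomy scheme of Ratner \cite{Rat} and Fr\c{a}czek--Lema\'nczyk \cite{Fra-Lem}, substituting the switchable R-property for the R- or WR-property used there.

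\emph{Dichotomy on the disintegration.} Write $\rho = \int_Y \rho_y\,d\nu(y)$. The $(T_t\times S_t)$-invariance of $\rho$ gives $(T_t)_*\rho_y = \rho_{S_t y}$ for $\nu$-a.e.\ $y$, so the ``atomic type'' of $\rho_y$ (number of atoms, their weights, presence of a continuous part) is $(S_t)$-invariant in $y$; ergodicity of $(S_t,\nu)$ makes it $\nu$-a.s.\ constant. If $\rho_y$ is $\nu$-a.s.\ uniformly distributed on a fixed finite number of atoms, $\rho$ is a finite extension of $\nu$ and we are done. Otherwise---the \emph{diffuse case}---for every $\delta>0$ and every $\eta>0$ there is $Y_{\delta,\eta}\subset Y$ with $\nu(Y_{\delta,\eta})>1-\eta$ such that, for every $y\in Y_{\delta,\eta}$, the support of $\rho_y$ contains a pair $x_1\neq x_2$ with $d(x_1,x_2)<\delta$ lying on distinct $(T_t)$-orbits.

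\emph{Extracting a single invariance via SWR.} Assume the diffuse case. Fix $\epsilon>0$, a time $t_0$ for which the flow has $R(t_0,P)$ and for which $T_{t_0}\times S_{t_0}$ is $\rho$-ergodic (uncountably many such $t_0$ exist), and let $Z=Z(\epsilon,N)\subset X$ with $\mu(Z)>1-\epsilon$ be as in Definition \ref{wrs}. Fubini together with Lemma \ref{bet} provide a positive $\rho$-measure set of triples $(x_1,x_2,y)$ with $x_i\in Z\cap \operatorname{supp}\rho_y$, $d(x_1,x_2)<\delta$, and with $(x_i,y)$ joint Birkhoff-generic for $\rho$ under the time-$t_0$ map of the joint flow. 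For such triples Definition \ref{wrs} furnishes $M,L\geq N$ with $L/M\geq \kappa$ and $p=p(x_1,x_2)\in P$ for which \eqref{fir} or \eqref{sec} holds. Given a bounded uniformly continuous $F(x,y)=f(x)g(y)$, the forward estimate \eqref{fir} combined with uniform continuity of $F$ yields
\[
\Bigl|\tfrac{1}{L}\textstyle\sum_{n=M}^{M+L}F(T_{nt_0}x_1,S_{nt_0}y) \;-\; \tfrac{1}{L}\textstyle\sum_{n=M}^{M+L}\bigl(F\circ(T_p\times \id)\bigr)(T_{nt_0}x_2,S_{nt_0}y)\Bigr| = O(\epsilon),
\]
while Lemma \ref{bet} brings both averages within $O(\epsilon)$ of $\int F\,d\rho$ and of $\int F\circ(T_p\times \id)\,d\rho$, respectively. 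The backward alternative \eqref{sec} is handled by the same computation for the inverse flow, which also preserves $\rho$. Letting $\epsilon \to 0$ and using compactness of $P$, one extracts $p_0\in P\setminus\{0\}$ such that $(T_{p_0}\times \id)_*\rho = \rho$.

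\emph{From a single invariance to product.} Composing $(T_{p_0}\times \id)$ with the $\rho$-preserving $(T_{-p_0}\times S_{-p_0})$ gives $(\id\times S_{-p_0})_*\rho = \rho$, so the closed stabilizer subgroup $\Sigma := \{s\in\R : (T_s\times \id)_*\rho = \rho\}$ is nontrivial. The heart of the final argument is then a density claim: by re-running the SWR extraction with the freedom in the closeness scale $\delta$ and in the admissible time $t_0$, one produces a second invariance shift $p_1\in\Sigma$ with $p_1/p_0\notin\Q$, whence $\Sigma=\R$. Then $\rho$ is $(T_s\times \id)$-invariant for every $s$; in particular $\rho_y$ is $(T_t)$-invariant for every $t$, and since $\int\rho_y\,d\nu = \mu$ and $\mu$ is $(T_t)$-ergodic by weak mixing, $\rho_y=\mu$ $\nu$-a.s., i.e., $\rho=\mu\otimes\nu$.

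The main obstacle is the density claim in the last step: one application of SWR outputs only one $p\in P$, and one has to verify that as the close pair $(x_1,x_2)$ varies over a positive $\rho$-measure set and $t_0$ ranges over an uncountable set of admissible times, the resulting $p$'s cannot be all confined to a discrete subgroup $p_0\Z$; that is, the SWR is flexible enough that the stabilizer $\Sigma$ cannot be a proper closed subgroup of $\R$. The switchable aspect is handled symmetrically throughout: pairs satisfying only the backward estimate \eqref{sec} are processed via the inverse flow and yield exactly the same type of invariance, so the switchable dichotomy never ``wastes'' a close pair.
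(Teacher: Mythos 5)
Your proposal takes a genuinely different route from the paper. The paper argues by contradiction: assuming $\rho\neq\mu\otimes\nu$, it introduces the continuous function $k(t)=\sum_{i,j}2^{-i-j}|\rho(T_tB_i\times C_j)-\rho(B_i\times C_j)|$, uses weak mixing to conclude $k(p)>\epsilon$ for all $p$ in the fixed compact set $P$, and then shows that if two points $(x,y),(x',y)\in U$ on the same $y$-fibre were closer than the SWR scale $\delta_0$, the switchable drift by some $p\in P$ combined with good forward or backward Birkhoff averaging (Lemmas~\ref{bet}, \ref{comp}) would force $|\rho(T_pB_{i_p}\times C_{j_p})-\rho(B_{i_p}\times C_{j_p})|<\epsilon$, a contradiction. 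Thus points in fibres over a positive-measure set are $\delta_0$-separated, and Lemma~\ref{rto} gives finite extension. No invariance of $\rho$ under any $T_p\times\id$ is ever asserted; the drift is used purely as a separation device.

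You instead try to argue forward: in the ``diffuse'' case, extract from SWR and Birkhoff genericity a single $p_0\in P$ with $(T_{p_0}\times\id)_*\rho=\rho$, and then promote the stabilizer $\Sigma=\{s: (T_s\times\id)_*\rho=\rho\}$ from $p_0\Z$ to all of $\R$ to conclude $\rho=\mu\otimes\nu$. The first extraction step is plausible (a compactness/diagonal argument over a countable dense family of test functions), but the ``density claim'' you flag is a genuine gap, and I do not see how SWR can close it. The definition of SWR fixes $P$ once and for all; a single application gives you \emph{one} $p\in P$, and nothing in the definition lets you steer the size of $p$ by varying $\delta$ or $t_0$ --- all the $p$'s you produce could a priori lie in $p_0\Z\cap P$. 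Consequently you cannot rule out $\Sigma=p_0\Z$, and a single non-zero $p_0$ is not enough: $(T_{p_0})_*\rho_y=\rho_y$ only says the conditionals are $T_{p_0}$-invariant, which for a finite extension (atomic, hence singular to $\mu$) does not force $\rho_y=\mu$. The paper's contradiction scheme is precisely designed to bypass this issue: it never needs to know \emph{which} $p$ SWR produces, only that it lies in $P$ where $k>\epsilon$. So your argument, as it stands, does not prove the theorem; the final step would need an entirely new idea beyond what SWR provides.
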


\noindent To prove this theorem we need some lemmas from \cite{Fra-Lem}.

\begin{lemma}\label{comp} Let $(T_t)_{t\in \R}$ be an ergodic almost continuous flow acting on $(X,\mathscr{B},\mu)$, and $(S_t)_{t\in\R}$ be another ergodic flow acting on $(Y,\mathscr{C},\nu)$. Let $\rho\in J((T_t)_{t\in\R},(S_t)_{t\in\R})$ be such that $\rho$ is ergodic for automorphisms $T_1\times S_1$ (hence, for $T_{-1}\times S_{-1}$). Let $P\subset \R$ be non-empty and compact. Let $A\in \mathscr{B}$ be such that $\mu(\partial A)=0$ and $B\in\mathscr{C}$. Then, for every $\epsilon,\delta,\kappa>0$ there exist a natural number $N=N(\epsilon,\delta,\kappa)$ and a set $Z=Z(\epsilon,\delta,\kappa)\subset \mathscr{B}\otimes\mathscr{C}$ with $\rho(Z)>1-\delta$ such that for any $\N\ni M,L\geq N$ with $\frac{L}{M}\geq \kappa$ and any $p\in P$, we have
$$\left|\frac{1}{L}\sum_{j=M}^{M+L}\chi_{T_{-p}A\times B}(T_jx,S_jy)-\rho(T_{-p}A\times B)\right|<\epsilon$$
and
$$\left|\frac{1}{L}\sum_{j=M}^{M+L}\chi_{T_{-p}A\times B}(T_{-j}x,S_{-j}y)-\rho(T_{-p}A\times B)\right|<\epsilon$$
for every $(x,y)\in Z$.
\end{lemma}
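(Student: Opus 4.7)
The plan is to reduce the uniform-in-$p$ statement to a finite cover of $P$, and then to apply Lemma~\ref{bet} simultaneously to the finitely many sets produced in the joining space $(X \times Y, \mathscr{B}\otimes\mathscr{C}, \rho)$. Since $P$ is compact, I would first fix a small parameter $\eta' > 0$ (to be determined) and choose $p_1, \ldots, p_r \in P$ such that every $p \in P$ lies within $\eta'$ of some $p_i$.

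The key preparatory step is to control the symmetric differences $T_{-p}A \triangle T_{-p_i}A$ uniformly; here is where $\mu(\partial A) = 0$ and almost continuity enter. Setting
$$\tilde A_{\eta'} := \bigcup_{|s| \leq \eta'}\bigl(A \triangle T_s A\bigr),$$
the claim is that $\mu(\tilde A_{\eta'}) < \epsilon'$ provided $\eta'$ is small enough. Indeed, since $\mu(\partial A) = 0$ the $\mu$-measure of a small metric neighbourhood of $\partial A$ can be made arbitrarily small; intersecting with the almost continuity set $X(\epsilon')$ (on which $d(T_s x, x)$ is uniformly small for $s$ in a small interval around $0$) yields a set of $\mu$-measure $> 1 - \epsilon'$ on which $\chi_A(T_s x) = \chi_A(x)$ for all $|s| \leq \eta'$. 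This gives $T_{-p}A \triangle T_{-p_i}A \subseteq T_{-p_i}\tilde A_{\eta'}$ whenever $|p - p_i| < \eta'$, and in particular $\mu(T_{-p}A \triangle T_{-p_i}A) < \epsilon'$.

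Next, apply Lemma~\ref{bet} to the two ergodic automorphisms $T_1 \times S_1$ and $T_{-1} \times S_{-1}$ of $(X \times Y, \rho)$ — the latter being ergodic as the inverse of the former — successively for the $2r$ measurable sets $T_{-p_i}A \times B$ and $T_{-p_i}\tilde A_{\eta'} \times Y$, $i = 1, \ldots, r$. Intersecting the resulting sets and taking the maximum of the $N$'s yields a single $N = N(\epsilon, \delta, \kappa)$ and $Z \subset \mathscr{B} \otimes \mathscr{C}$ with $\rho(Z) > 1 - \delta$, on which all $2r$ Birkhoff averages along windows $[M, M+L]$ with $L, M \geq N$, $L/M \geq \kappa$, approximate their respective $\rho$-measures within $\epsilon/3$, in both forward and backward time.

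The endgame is a triangle-inequality split. For $(x,y) \in Z$ and $p \in P$, pick $p_i$ with $|p - p_i| < \eta'$ and decompose the desired error into (i) a pointwise discrepancy bounded by $\frac{1}{L}\sum_j \chi_{T_{-p_i}\tilde A_{\eta'}}(T_j x)$, itself $< \mu(\tilde A_{\eta'}) + \epsilon/3 < \epsilon' + \epsilon/3$ on $Z$; (ii) the single-$p_i$ Birkhoff error $< \epsilon/3$; and (iii) the static discrepancy $|\rho(T_{-p}A \times B) - \rho(T_{-p_i}A \times B)| \leq \mu(T_{-p}A \triangle T_{-p_i}A) < \epsilon'$. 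Choosing $\epsilon' < \epsilon/6$ from the outset makes the total less than $\epsilon$; the backward direction is identical, using the ergodicity of $T_{-1} \times S_{-1}$. The main obstacle is the continuity step, namely combining almost continuity with $\mu(\partial A) = 0$ to establish uniform smallness of $\mu(\tilde A_{\eta'})$ — once that is in hand, Lemma~\ref{bet} does the heavy lifting.
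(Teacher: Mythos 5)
Your proposal takes a genuinely different route from the paper: the paper does not reprove this statement from scratch at all, but simply invokes Lemma~5.4 of \cite{Fra-Lem} (which gives the forward-time half of the conclusion), applies it once to the flows $(T_t),(S_t)$ and once to $(T_{-t}),(S_{-t})$ with error budget $\delta/2$ each, and then intersects the two resulting sets $Z_\pm$ and takes $N=\max(N_+,N_-)$. What you have written is essentially a reconstruction of the argument that must be inside Lemma~5.4 of \cite{Fra-Lem}, together with the forward/backward doubling; the overall architecture --- finite $\eta'$-net of the compact set $P$, uniform control of $\mu(T_{-p}A\triangle T_{-p_i}A)$ via almost continuity and $\mu(\partial A)=0$, application of Lemma~\ref{bet} to the finitely many sets $T_{-p_i}A\times B$ and $T_{-p_i}\tilde A_{\eta'}\times Y$, and a three-term triangle inequality --- is correct in spirit and the bookkeeping at the end is right.

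There is, however, a genuine gap in the one step you yourself single out as the ``main obstacle''. You assert that if $d(x,\partial A)>r$, $x$ lies in the almost-continuity set, and $d(T_sx,x)<r$ for $|s|\le\eta'$, then $\chi_A(T_sx)=\chi_A(x)$. This implication is \emph{not} a consequence of $\mu(\partial A)=0$ and the almost-continuity hypothesis alone. The point is that $d(x,\partial A)>r$ and $d(z,x)<r$ do not force $\chi_A(z)=\chi_A(x)$ in a general complete metric space: one only gets $z\notin\partial A$, hence $z\in\Int A\cup\Int(A^c)$, but $z$ could lie in the component different from $x$. (Think of $A=\{2\}$ inside $X=[0,1]\cup\{2\}\subset\R$, where $\partial A=\emptyset$.) To close this, one needs to know that the orbit segment $\{T_sx:|s|\le\eta'\}$ is \emph{connected} (so that it must stay in one of the two open sets $\Int A$, $\Int(A^c)$), and connectedness of the orbit map does not follow from the almost-continuity condition as stated: that condition only says the orbit segment has small \emph{diameter}, not that $s\mapsto T_sx$ is continuous. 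A clean fix, and what I believe the cited Lemma~5.4 of \cite{Fra-Lem} actually does, is to avoid the uncountable union $\tilde A_{\eta'}$ altogether: choose continuous $g^-\le\chi_A\le g^+$ with $\int(g^+-g^-)\,d\mu<\epsilon'$ (possible because $\mu(\partial A)=0$ and $\mu$ is regular), use almost continuity to make $g^\pm\circ T_p$ uniformly close to $g^\pm\circ T_{p_i}$ on a large set for $|p-p_i|<\eta'$, and sandwich. Your plan would then go through verbatim. As written, the claim $\mu(\tilde A_{\eta'})<\epsilon'$ needs this extra justification (and, as a minor point, $\tilde A_{\eta'}$ as an uncountable union is not obviously $\mathscr{B}$-measurable, though this is easily repaired by replacing it with the measurable set on whose complement you establish the constancy of $\chi_A$).
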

\begin{proof} The proof is a simple consequence of Lemma 5.4. in \cite{Fra-Lem}. One uses this lemma first for the flows $(T_t)_{t\in\R}$ and $(S_t)_{t\in\R}$ and ergodic joining $\rho\in J((T_t)_{t\in\R},(S_t)_{t\in\R})$ to get, for $\epsilon,\frac{\delta}{2}, \kappa>0$, a natural number $N_+\in \N$ and a set $Z_+\subset \mathscr{B}\otimes\mathscr{C}$ with $\rho(Z_+)>1-\frac{\delta}{2}$. Then, for flows $(T_{-t})_{t\in\R}$ and $(S_{-t})_{t\in\R}$ and the same ergodic joining $\rho$ to get, for $\epsilon,\frac{\delta}{2},\kappa>0$, a natural number $N_-\in \N$ and a set $Z_-\subset \mathscr{B}\otimes\mathscr{C}$ with $\rho(Z_-)>1-\frac{\delta}{2}$. To finish the proof one takes $N:=\max(N_+,N_-)$
and $Z=Z_+\cap Z_-$. \end{proof}
\vspace{1ex}

Next lemma is used in the proof of Theorem~3 in \cite{Rat}.
\begin{lemma}\label{rto} Let $(T_t)_{t\in\R}$ and $(S_t)_{t\in\R}$ be two ergodic flows. Let $\rho\in J^e((T_t)_{t\in\R},(S_t)_{t\in \R})$ be an ergodic joining. Then if there exists a set $V$ with $\rho(V)>0$ such that for any points $(x,y),(x',y)\in V$ either $x$ is in the orbit of $x'$ or $d(x,x')>c_0$ for some constant $c_0>0$, then $\rho$ is a finite extension of $\nu$.
\end{lemma}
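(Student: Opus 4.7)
My plan is to disintegrate the joining $\rho$ over $\nu$, extract from the hypothesis on $V$ a uniform orbit-count bound in each fiber via compactness, and then convert this into genuine finite atomic support using the $(T_t\times S_t)$-ergodicity of $\rho$. Concretely, write $\rho=\int_Y \rho_y\, d\nu(y)$; the $(T_t\times S_t)$-invariance of $\rho$ together with the $S_t$-invariance of $\nu$ yields the covariance $(T_t)_*\rho_{S_{-t}y}=\rho_y$ for $\nu$-a.e.\ $y$ and every $t$. The target is to prove that the fibers $\rho_y$ are purely atomic with an essentially constant finite number of atoms, which is exactly the finite-extension conclusion.

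Next I would use inner regularity of $\mu$ to choose a compact $K\subset X$ with $\mu(K)$ close enough to $1$ so that $V^K:=V\cap(K\times Y)$ still has $\rho(V^K)>0$. For any $y$, the section $V^K_y$ lies in the compact set $K$; by the hypothesis on $V$, a choice of one representative per $T$-orbit meeting $V^K_y$ forms a $c_0$-separated subset of $K$, whose cardinality is therefore at most some finite $N_0=N_0(K,c_0)$ independent of $y$. Hence for $\nu$-a.e.\ $y$ in the positive-$\nu$-measure set $Y_0:=\{y:\rho_y(V^K_y)>0\}$, the restriction $\rho_y|_{V^K_y}$ is concentrated on at most $N_0$ distinct $T$-orbits. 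Exhausting $X$ by an increasing sequence of compacts $K_n$ with $\mu(K_n)\to 1$, one gets that for $\nu$-a.e.\ $y$ the measure $\rho_y$ is carried by at most countably many $T$-orbits, with the count on each compact piece controlled by the corresponding $N_0(K_n,c_0)$.

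The main obstacle, as usual in Ratner-style arguments, is to upgrade this to a uniform finite bound on the number of orbits and to rule out any continuous $\rho_y$-mass along a single orbit. For the first, I would consider the function $N(y):=\#\{T\text{-orbits carrying positive }\rho_y\text{-mass}\}$, which by the covariance relation is $S_t$-invariant modulo $\nu$-null sets, hence $\nu$-a.e.\ constant by ergodicity of $(S_t,\nu)$, and this constant is finite by the compact-exhaustion bound applied on a full-$\rho$-measure saturated set. For the second, if $\rho_y$ had a nontrivial continuous part on some orbit, the flow-covariance combined with the ergodicity of $(T_t\times S_t,\rho)$ would force this continuous mass to spread over a continuum of orbit-classes in some positive-measure fiber, contradicting the uniform countable-orbit bound extracted from $V$ via the compactness step. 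Combining these two steps, $\rho_y$ is purely atomic on a uniformly bounded finite number of $T$-orbits for $\nu$-a.e.\ $y$, which is precisely the statement that $\rho$ is a finite extension of $\nu$.
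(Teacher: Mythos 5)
Your overall framework (disintegration, the covariance $(T_t)_*\rho_{S_{-t}y}=\rho_y$, and the $c_0$-separation plus compactness to bound the number of orbits meeting the sections of $V$) is on the right track, and the observation that the orbit count $N(y)$ is $S_t$-invariant because the covariance fixes every $T$-orbit is correct. Note that the paper itself gives no proof of this lemma — it is cited to the proof of Theorem~3 in \cite{Rat} — so the proposal can only be judged on its own merits; on those merits there are two genuine gaps.

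The principal gap is that the finiteness of $N(y)$ is never actually established. Your compactness argument bounds the number of $T$-orbits meeting $V_y\cap K$, \emph{not} the number of orbits charged by $\rho_y$: the support of $\rho_y$ extends in general far beyond $V_y$, so nothing you have done transfers the bound to $\operatorname{supp}\rho_y$. The exhaustion by compacts at best shows that $V_y$ meets countably many orbits, with a bound $N_0(K_n,c_0)$ that is unbounded in $n$, and the phrase ``this constant is finite by the compact-exhaustion bound applied on a full-$\rho$-measure saturated set'' is unsupported: the $y$-sections of $\bigcup_t (T_t\times S_t)V$ are unions over $t$ of $T_tV_{S_{-t}y}$, whose distinguished orbits vary with $t$ and inherit no uniform bound. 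The ingredient you are missing is to exploit the $(T_t\times S_t)$-ergodicity of $\rho$ itself, not merely the ergodicity of $(S_t,\nu)$. The function $\phi(x,y):=\rho_y\big(\mathrm{orb}_T(x)\big)$ is $(T_t\times S_t)$-invariant (by the covariance and the fact that $T_t$ maps each orbit to itself), hence $\rho$-a.e.\ equal to a constant $c$. If $c=0$, then for $\nu$-a.e.\ $y$ no individual orbit carries positive $\rho_y$-mass, so $\rho_y(V_y)=0$ a.e.\ (after the inner-regularity reduction $V_y$ is confined to finitely many orbits), contradicting $\rho(V)>0$. Hence $c>0$, and for a.e.\ $y$ the measure $\rho_y$ is carried by exactly $1/c$ orbits, each of mass $c$; this gives finiteness and the constancy of $N(y)$ in a single step, with no exhaustion argument at all.

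The second gap is the atomicity step, which as written is incorrect: the covariance keeps every single $T$-orbit fixed, so continuous $\rho_y$-mass along one orbit cannot be ``forced to spread over a continuum of orbit-classes,'' and no tension with the orbit-count bound arises. If the notion of finite extension you intend requires atomic fibers and not merely finitely many charged orbits, this step needs a genuine argument; as it stands the assertion would not survive scrutiny.
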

In what follows, we consider only $(X,d)$ be a $\sigma$-compact metric space. Let $A\in \mathscr{B}$. For $\eta>0$ we denote by $V_\eta(A):=\{x\in X:\; d(x,A)<\eta\}$.
\begin{lemma}\label{nei}[cf.\ \cite{Fra-Lem}]  For any $A\in \mathscr{B}$ there exists $R\subset (0,+\infty)$ such that $(0,+\infty)\setminus R$ is countable and $\mu(\partial V_{\eta}(A))=0$ for $\eta\in R$. It particular,  there exists a dense family $(B_i)_{i\geq 1}$ in $\mathscr{B}$ with the property $\mu(\partial B_i)=0$ for every $i\in \N$.
\end{lemma}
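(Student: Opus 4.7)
\medskip

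\noindent\textbf{Proof plan for Lemma \ref{nei}.}

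The plan for the first claim is to exploit the continuity of the distance function. Setting $f(x):=d(x,A)$, continuity gives that $V_\eta(A)=f^{-1}\bigl([0,\eta)\bigr)$ is open, so its topological boundary satisfies
\begin{equation*}
\partial V_\eta(A) \;=\; \overline{V_\eta(A)}\setminus V_\eta(A) \;\subseteq\; f^{-1}(\{\eta\}),
\end{equation*}
because any limit of points with $f<\eta$ has $f\leq\eta$ and any point not in $V_\eta(A)$ has $f\geq\eta$. Now the level sets $\{f^{-1}(\{\eta\})\}_{\eta>0}$ are pairwise disjoint and $\mu$ is finite, so the set $R':=\{\eta>0:\mu(f^{-1}(\{\eta\}))>0\}$ is at most countable. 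Taking $R:=(0,+\infty)\setminus R'$ yields the first claim.

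For the second claim, I would use the first one applied to one-point sets. Since $(X,d)$ is $\sigma$-compact, it is separable; fix a countable dense subset $\{x_n\}_{n\in\N}$. Applying part one to $A=\{x_n\}$ (so that $V_\eta(A)=B(x_n,\eta)$), I obtain for each $n$ a co-countable set $R_n\subset(0,+\infty)$ of radii with $\mu(\partial B(x_n,\eta))=0$; pick inside each $R_n$ a countable dense subset $\{\eta_{n,k}\}_{k\in\N}$. Let $\mathcal{A}$ be the countable algebra generated by $\{B(x_n,\eta_{n,k})\}_{n,k\in\N}$. Since $\partial(A\cup B)\subseteq\partial A\cup\partial B$, $\partial(A\cap B)\subseteq\partial A\cup\partial B$ and $\partial(A^c)=\partial A$, every element of $\mathcal{A}$ has $\mu$-null boundary.

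It remains to enumerate $\mathcal{A}=\{B_i\}_{i\geq 1}$ and argue that it is dense in $\mathscr{B}$ with respect to the pseudo-metric $(A,B)\mapsto \mu(A\triangle B)$. The open balls $B(x_n,\eta_{n,k})$ form a basis for the topology of $X$ (because the $x_n$ and the radii are dense), so $\mathcal{A}$ generates the Borel $\sigma$-algebra; by the standard approximation lemma for finite measures every Borel set can be approximated in symmetric difference by finite unions of basis elements, i.e.\ by elements of $\mathcal{A}$. There is no substantial obstacle in this proof; the only mild subtlety is making sure the chosen radii $\eta_{n,k}$ remain dense in $(0,+\infty)$, which is automatic since removing a countable set from $(0,+\infty)$ leaves a dense set.
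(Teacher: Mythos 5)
Your proof is correct. The paper itself does not spell out a proof of this lemma but only cites Fr\c{a}czek--Lema\'nczyk, so there is no in-text argument to compare against; the reasoning you give is the standard one and is surely close to what the reference does. Both halves of your argument are sound: the inclusion $\partial V_\eta(A)\subseteq f^{-1}(\{\eta\})$ together with disjointness of level sets and finiteness of $\mu$ gives that at most countably many $\eta$ are bad, and for the ``in particular'' part you correctly use $\sigma$-compactness (hence separability) to build a countable basis of balls with $\mu$-null boundary, close it under the algebra operations (which preserve null boundary because $\partial(A\cup B)\subseteq\partial A\cup\partial B$, etc.), and invoke the standard approximation theorem: an algebra generating $\mathscr{B}$ is $\mu\triangle$-dense in $\mathscr{B}$. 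One tiny remark: you could phrase the density of the chosen radii $\eta_{n,k}$ slightly more explicitly -- what you actually need is that for every $x_n$ and every $\varepsilon>0$ there is some admissible radius in $(0,\varepsilon)$, which holds since $R_n$ is co-countable, hence dense; but as you note this is automatic, and the argument goes through.
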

\begin{proof}[Proof of Theorem \ref{cons}.] Let $\rho\in J((T_t)_{t\in\R},(S_t)_{t\in \R})$ be an ergodic joining and $\rho\neq \mu\times \nu$. Assume that $(T_t)_{t\in\R}$ has the switchable $R(t_0,P)$-property and $\rho$ is ergodic for $T_{t_0}\times S_{t_0}$ (then $\rho$ is ergodic for $T_{-t_0}\times S_{-t_0}$). Such $t_0>0$ always exists because an ergodic flow can have at most countably many non-ergodic time automorphisms. For simplicity of notation, we assume $t_0=1$. Let $\{B_i\}_{i\geq 1}$ and $\{C_i\}_{i\geq 1}$ be two countable dense families in the $\sigma$-algebras $\mathscr{B}$ and $\mathscr{C}$, respectively. Consider the following real function:
$$\R\ni t\to k(t):=\sum_{i,j\geq 1}(1/2^{i+j})|\rho(T_t(B_i)\times C_j)-\rho(B_i\times C_j)|.$$
As in Lemma 5.4. in \cite{Fra-Lem}, we conclude that $k:\R\to\R$ is a continuous function and for any $t\in\R$, $k(t)>0$. Indeed, it follows by the fact that if for some $r\in\R\setminus\{0\}$ we have for any $i,j\in \N$ $\rho(T_{r}(B_i)\times C_j)=\rho(B_i\times C_j)$ then $\rho$ is product measure (recall that $T_t$    is assumed to be weak mixing hence every time $r$ of the flow is ergodic). \\
The set $P\subset \R\setminus \{0\}$ is compact, therefore there exists $\epsilon>0$ such that $k(p)>\epsilon$ for any $p\in P$. It follows by the definition of the function $k$ that there exists a number $R:=R(\epsilon)$ such that
$$\sum_{i,j\geq 1}^R(1/2^{i+j})|\rho(T_p(B_i)\times C_j)-\rho(B_i\times C_j)|>\epsilon/2$$
for every $p\in P$. Therefore, for every $p\in P$, there exist $1\leq i,j\leq R$ such that $|\rho(T_p(B_i)\times C_j)-\rho(B_i\times C_j)|>\epsilon$.\\
By Lemma \ref{nei}, there exists $\epsilon'<\frac{\epsilon}{8}$ such that for every $1\leq i\leq R$
$$\mu(V_{\epsilon'}(B_i)\setminus B_i)<\epsilon,\; \text{and}\; \mu(\partial V_{\epsilon'}(B_i))=0.$$
It follows by the fact that $\rho$ is a joining that
\begin{equation}\label{prop}
|\rho(V_{\epsilon'}(B_i)\times C_j)-\rho(B_i\times C_j)|<\frac{\epsilon}{2}\;\text{and}\; |\rho(S_{-t}V_{\epsilon'}(B_i)\times C_j)-\rho(S_{-t}B_i\times C_j)|<\frac{\epsilon}{2},
\end{equation}
for $1\leq i,j\leq R$ and every $t\in\R$.
By the switchable $R(1,P)$-property, let $\kappa:=\kappa(\epsilon')$.
By Lemma~\ref{bet} applied to $\frac{\epsilon}{8},\frac{1}{8},\kappa$, the sets $V_{\epsilon'}(B_i)\times C_j$, $1\leq i,j\leq R$, and to automorphisms $T_1\times S_1$ and $T_{-1}\times S_{-1}$, we get $N_1\in \N$ and a set $U_1\in \mathscr{B}\otimes\mathscr{C}$ with $\rho(U_1)>\frac{7}{8}$, such that for every $L,M\geq N_1$ with $\frac{L}{M}\geq \kappa$ and every $(x,y)\in U_1$, we have
\begin{equation}\label{1}
\left|\frac{1}{L}\sum_{k=M}^{M+L}\chi_{V_{\epsilon'}(B_i)\times C_j}(T^kx,S^ky)-\rho(V_{\epsilon'}(B_i)\times C_j)\right|<\frac{\epsilon}{8}
\end{equation}
\begin{equation}\label{2}
\left|\frac{1}{L}\sum_{k=M}^{M+L}\chi_{V_{\epsilon'}(B_i)\times C_j}(T^{-k}x,S^{-k}y)-\rho(V_{\epsilon'}(B_i)\times C_j)\right|<\frac{\epsilon}{8}.
\end{equation}
Next, by Lemma \ref{comp} applied to $\frac{\epsilon}{8},\frac{1}{8},\kappa>0$ and the sets $B_i\times C_j$, $1\leq i,j\leq R$, there exist $N_2\in \N$ and a set $U_2\subset \mathscr{B}\otimes \mathscr{C}$ with $\rho(U_2)>\frac{7}{8}$ such that for every $L,M\geq N_2$ with $\frac{L}{M}\geq \kappa$ and any $p\in P$, we have
\begin{equation}\label{3}\left|\frac{1}{L}\sum_{k=M}^{M+L}\chi_{T_{-p}B_i
\times C_j}(T_kx,S_ky)-\rho(T_{-p}B_i\times C_j)\right|<\frac{\epsilon}{8}
\end{equation}
and

\begin{equation}\label{4}\left|\frac{1}{L}\sum_{k=M}^{M+L}\chi_{T_{-p}B_i\times C_j}(T_{-k}x,S_{-k}y)-\rho(T_{-p}B_i\times C_j)\right|<\frac{\epsilon}{8}.
\end{equation}
It follows that if we set $N_0:=\max(N_1,N_2)$ and $U_0:=U_1\cap U_2$, then $\rho(U_0)>\frac{1}{2}$ and for every $L,M\geq N_0$ with $\frac{L}{M}\geq \kappa$, any $p\in P$, the equations (\ref{1}), (\ref{2}), (\ref{3}), (\ref{4}) are satisfied for every $(x,y)\in U_0$. Using the switchable $R(1,P)$-property with $\epsilon'>0$ and $N_0\in \N$, we obtain $\delta=\delta(\epsilon',N_0)$ and $Z=Z(\epsilon',N_0)$ with $\mu(Z)>1-\epsilon'$. Now, we will use Lemma \ref{rto} with the set $U:=U_0\cap(Z\times Y)$ (then of course $\rho(U)>\frac{1}{4}$) and $\delta_0=\delta(\epsilon', N_0)$ to prove that for every $(x,y),(x',y)\in U$, $d(x,x')\geq\delta_0$. Assume on the contrary that $d(x,x')<\delta_0$. Then by the switchable  R(1,P)-property, there exist $L_0,M_0>N_0$ with $\frac{L_0}{M_0}\geq \kappa$ and $p\in P$ such that
$$
\frac{1}{L_0}\big|\{n\in[M_0,M_0+L_0]: d(T_{n}(x),T_{n+p}(x'))<\epsilon'\}\big|>1-\epsilon'$$
\begin{center}
or
\end{center}
$$\frac{1}{L_0}\big|\{n\in[M_0,M_0+L_0]: d(T_{-n}(x),T_{-n+p}(x'))<\epsilon'\}\big|>1-\epsilon'.$$
Assume that the first inequality is satisfied. We will use equations (\ref{1}) and (\ref{3}) (in case the second one is satisfied, we use equations (\ref{2}) and (\ref{4})). Let $1\leq i_p, j_p\leq R$ be the numbers which satisfy $|\rho(T_p(B_{i_p})\times C_{j_p})-\rho(B_{i_p}\times C_{j_p})|>\epsilon$.  Let $K=K(x,x',p):= \{n\in[M_0,M_0+L_0]: d(T_{n}(x),T_{n+p}(x'))<\epsilon'\}$. It follows that if $k\in K$ and $T_{k+p}x'\in A_i$ then $T_kx\in V_{\epsilon'}(A_i)$. Therefore
\begin{multline}
\rho(T_{-p}B_{i_p}\times C_{j_p})\leq \frac{1}{L_0}\sum_{k=M_0}^{M_0+L_0}\chi_{T_{-p}B_{i_p}\times C_{j_p}}(T^kx',S^ky)+\frac{\epsilon}{8}\leq\\
\frac{\epsilon' L_0}{L_0}+\frac{1}{L_0}\sum_{k=M_0}^{M_0+L_0}\chi_{V_{\epsilon'}(B_{i_p})\times C_{j_p}}(T^kx,S^ky)+\frac{\epsilon}{8}\leq \\
\frac{\epsilon}{2}+\rho(V_{\epsilon'}(B_{i_p})\times C_{j_p})<\epsilon + \rho(B_{i_p}\times C_{j_p}).
\end{multline}
A similar arguments show that $\rho(B_{i_p}\times C_{j_p})<\epsilon+\rho(T_{-p}B_{i_p}\times C_{j_p})$ and consequently, $|\rho(B_{i_p}\times C_{j_p})-\rho(T_{-p}B_{i_p}\times C_{j_p})|<\epsilon$. This contradicts our assumption that $|\rho(T_p(B_{i_p})\times C_{j_p})-\rho(B_{i_p}\times C_{j_p})|>\epsilon$ is satisfied. Therefore, for any $(x,y),(x',y)\in U$ we have  $d(x,x')\geq\delta_0$ and an application of Lemma~\ref{rto} completes the proof. \end{proof}
\section{SWR-property for special flows} \label{sec.swr.special}
In this section, we will prove a sufficient condition for SWR-property in the case of special flows over an ergodic isometry We start by recalling the definition of special flows. 
Let $T$ be an automorphism $(X,\mathscr{B},\mu)$.  Let $f\in L^1(X,\mu)$ such that $f>0$.  The {\emph special flow} $(T_t^f)_{t\in\R}$ defined above $T$   and under the ceiling function $f$ is given by
\begin{eqnarray*}
X \times \R / \sim  &  \rightarrow &  X \times \R / \sim  \\
 (x,s) & \rightarrow & (x,s+t), \end{eqnarray*}
where $\sim$ is the
identification 
\begin{equation}
\label{FlowSpace}
(x, s + f(x)) \sim (T(x),s) 
\end{equation}
Equivalently the flow $(T_t^f)_{t\in\R}$ is defined for  $t+s \geq 0$ (with a similar definition for negative times) by 
$$T_t^f(x,s) = (T^n x, t+s-f^{(n)}(x))$$ 
   where $n$ is the unique integer such that 
\begin{equation}
\label{D-C}
   f^{(n)}(x) \leq t+s < f^{(n+1)}(x)
\end{equation}    
and
$$
f^{(n)}(x)=\left\{\begin{array}{ccc}
f(x)+\ldots+f(T^{n-1}x) &\mbox{if} & n>0\\
0&\mbox{if}& n=0\\
-(f(T^nx)+\ldots+f(T^{-1}x))&\mbox{if} &n<0.\end{array}\right.$$

If $T$ preserves a unique probability measure $\mu$ then the special flow will preserve a unique probability measure that is the normalized product measure of $\mu$ on the base and the Lebesgue measure on the fibers. If $X$ is a matric space with a metric $d$, so is $X^f$ with the metric  $d^f((x,s),(x',s')):=d(x,x')+|s-s'|$. Moreover, it is easy to show that if $(T_t^f)_{t\in \R}$ is a special flow acting on $X^f$, then $(T_t^f)_{t\in\R}$ is almost continuous (see Section \ref{sec.swr}) with $X(\epsilon)=\{(x,s)\in X^f: x\in X, \epsilon<s<f(x)-\epsilon\}$.

The following general lemma is a direct consequence of Birkhoff ergodic theorem.
\begin{lemma}\label{erg} Let $T$ be an ergodic automorphism $(X,\mathscr{B},\mu)$.  Let $f\in L^1(X,\mu)$, $\int_Xf\,d\mu\neq0$. For every $\epsilon,\kappa>0$ there exist $N=N(\epsilon,\kappa)$ and a set $A=A(\epsilon,\kappa)$ with $\mu(A)>1-\epsilon$ such that for every $M\geq N$
\begin{equation}\label{mne}\left|\frac{1}{M}\sum_{i=1}^Mf(T^ix)-
\int_{X}f\,d\mu\right|\leq \frac{\kappa}{3}\left|\int_X f\,d\mu\right|
\end{equation}
for every $x\in A$.
\end{lemma}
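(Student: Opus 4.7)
The plan is to deduce the statement directly from the Birkhoff pointwise ergodic theorem combined with Egorov's theorem; no deep idea is required, the lemma is essentially a uniform-in-$x$ restatement of Birkhoff convergence on a large measure set.

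First, since $T$ is ergodic and $f\in L^1(X,\mu)$, Birkhoff's theorem yields
$$\frac{1}{M}\sum_{i=1}^{M}f(T^{i}x)\ \longrightarrow\ \int_{X}f\,d\mu$$
as $M\to\infty$ for $\mu$-a.e.\ $x\in X$. In particular the measurable functions $g_{M}(x):=\frac{1}{M}\sum_{i=1}^{M}f(T^{i}x)$ converge pointwise almost everywhere to the constant $c:=\int_{X}f\,d\mu$.

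Next, since $(X,\mathscr{B},\mu)$ is a finite (probability) measure space, Egorov's theorem upgrades this pointwise a.e.\ convergence to uniform convergence off a set of arbitrarily small measure. Concretely, given $\varepsilon>0$, choose a measurable set $A=A(\varepsilon,\kappa)\subset X$ with $\mu(A)>1-\varepsilon$ on which $g_{M}\to c$ uniformly. Then, applying the definition of uniform convergence with the threshold $\frac{\kappa}{3}|c|>0$ (which is positive by the assumption $c\neq 0$), we obtain an integer $N=N(\varepsilon,\kappa)$ such that
$$\sup_{x\in A}\Bigl|g_{M}(x)-c\Bigr|\leq \frac{\kappa}{3}|c|\qquad\text{for every }M\geq N,$$
which is exactly inequality \eqref{mne}. (Note that Egorov actually allows $A$ to be chosen independently of $\kappa$, but the statement only demands joint dependence, so this is more than enough.)

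The only point worth flagging is the hypothesis $\int_{X}f\,d\mu\neq 0$, which is needed because the right-hand side of \eqref{mne} is a relative error; without it the conclusion would be vacuous for $\kappa$ small. There is no real obstacle beyond invoking these two standard theorems in the correct order.
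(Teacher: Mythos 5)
Your proof is correct and is exactly the argument the paper has in mind: the paper states the lemma without proof, remarking only that it is ``a direct consequence of Birkhoff ergodic theorem,'' and the standard way to turn Birkhoff's a.e.\ convergence into a uniform estimate on a set of measure $>1-\epsilon$ is precisely Egorov's theorem, as you do. Nothing to add.
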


\begin{remark}\label{adam}\em Assume that additionally $f$ is positive and bounded away from zero. Fix $\epsilon,\kappa>0$ ($\kappa<|\int_X f\,d\mu|<1/2$). It follows that there are constants $r_1,r_2>0$  such that if we take $x\in A$ then  for any $M,L\geq N$ with $\frac{L}{M}\geq \kappa$, we have $r_1<\frac{f^{(M)}(x)}{M}<r_2$ and
$$r_1<\frac{(1-\frac\kappa3)\int f\,d\mu(M+L)-(1+\frac\kappa3)\int f\,d\mu\cdot M}L\leq\frac{f^{(M+L)}(x)-f^{(M)}(x)}{M}<r_2.$$
\end{remark}

\begin{proposition}\label{cocy} Let $T:(X,d)\to(X,d)$ be an ergodic isometry and $f\in L^1(X,\mathscr{B},\mu)$ a positive function bounded away from zero. Let $(T_t^f)_{t\in\R}$ be the corresponding special flow. Let $P\subset \R\setminus\{0\}$ be a compact set. Assume that for every $\epsilon>0$ and $N\in \N$ there exist $\kappa=\kappa(\epsilon)$, $\delta=\delta(\epsilon,N)$ and a set $X'=X'(\epsilon,N)$ with $\mu(X')>1-\epsilon$, such that for any $x,y\in X'$ with $0<d(x,y)<\delta$ there exist $\N\ni M=M(x,y),L=L(x,y)$ with $M,L\geq N$, $\frac{L}{M}\geq \kappa$ and $p=p(x,y)\in P$ such that 
\begin{equation}\label{posi}|f^{(n)}(x)-f^{(n)}(y)-p|<\epsilon\;\;\text{for every}\;\; n\in [M,M+L]
\end{equation}
or
\begin{equation}\label{nega}|f^{(-n)}(x)-f^{(-n)}(y)-p|<\epsilon\;\text{for every}\;\; n\in [M,M+L].
\end{equation}
If $\gamma>0$ is such that the automorphism $T^f_\gamma$ is ergodic, then $(T^f_t)_{t\in \R}$ has the switchable $R(\gamma,P)$-property.
\end{proposition}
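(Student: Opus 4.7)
The strategy is to pass from Birkhoff-sum control on the base (hypotheses \eqref{posi}--\eqref{nega}) to orbit-closeness on the special flow, as required by the switchable $R(\gamma,P)$-property. The argument parallels the analogous transfer for the WR-property in \cite{Fra-Lem}, now run symmetrically in both time directions and knitted together by the switchable dichotomy.

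Fix $\epsilon>0$ and $N\in\N$. The plan is to select auxiliary small $\epsilon'\ll\epsilon$ and large $N'\geq N$ depending on $\gamma$, $\int_X f\,d\mu$ and $c:=\inf f>0$, invoke the hypothesis at $(\epsilon',N')$ to obtain $\kappa':=\kappa(\epsilon')$, $\delta':=\delta(\epsilon',N')$, $X'$, and, for each pair of close points in $X'$, the witnesses $M',L'\geq N'$ with $L'/M'\geq\kappa'$ and $p\in P$. Simultaneously I would apply Lemma \ref{erg} and Remark \ref{adam} to $f$ to produce a set $A\subset X$ of measure $>1-\epsilon/3$ on which $f^{(n)}/n$ is close to $\int_X f\,d\mu$ uniformly for $n$ in the window $[M',M'+L']$, and, using the ergodicity of $T^f_\gamma$, a second Birkhoff estimate applied to the indicator of the boundary strip $B_{\epsilon'}:=\{(x,s)\in X^f: s<\epsilon'\text{ or }s>f(x)-\epsilon'\}$ to produce a set $A^f\subset X^f$ of measure $>1-\epsilon/3$ on which the density of visits to $B_{\epsilon'}$ along the $T^f_\gamma$-orbit is bounded by a small multiple of $\epsilon$. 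The set $Z$ is then taken to be $A^f$ intersected with the preimage of $X'\cap A$ under the base projection, with threshold $\delta$ smaller than $\delta'$ and $\epsilon'$.

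For the forward case \eqref{posi}, given $(x,s),(y,s')\in Z$ with $d^f((x,s),(y,s'))<\delta$, set $M:=\lceil M'\int_X f\,d\mu/\gamma\rceil$ and $L:=\lfloor L'\int_X f\,d\mu/\gamma\rfloor$. For each $n\in[M,M+L]$ let $k_n$ be the unique integer with $f^{(k_n)}(x)\leq s+n\gamma<f^{(k_n+1)}(x)$, so that $T^f_{n\gamma}(x,s)=(T^{k_n}x,\,s+n\gamma-f^{(k_n)}(x))$. The Birkhoff estimate on $A$ forces $k_n\in[M',M'+L']$. For those $n$ with $(T^{k_n}x,\,s+n\gamma-f^{(k_n)}(x))\notin B_{2\epsilon'}$ --- the ``good'' $n$ --- the hypothesis \eqref{posi} combined with $|s-s'|<\delta$ forces the base index of $(y,s')$ at flow-time $n\gamma\pm p$ to also equal $k_n$; the fibre coordinates then differ by at most $\delta+\epsilon'<\epsilon$, while $d(T^{k_n}x,T^{k_n}y)=d(x,y)<\delta<\epsilon$ since $T$ is an isometry. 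The density of bad $n$ in $[M,M+L]$ is at most $\epsilon$ by the boundary-strip estimate on $A^f$, yielding \eqref{fir} with $\kappa:=\kappa'/2$. The backward case \eqref{nega} is handled by the identical argument with $T^f_{-\gamma}$ (ergodic since $T^f_\gamma$ is) in place of $T^f_\gamma$, yielding \eqref{sec}. Thus the switchable dichotomy on the hypothesis side translates into the switchable dichotomy in the conclusion (with $P$ replaced by $P\cup(-P)$ if needed to accommodate the sign inversion inherent in the flow-fibre translation), proving the switchable $R(\gamma,P)$-property. The main technical obstacle is the simultaneous coordination of two independent Birkhoff estimates --- one on $X$ to locate $k_n$ inside $[M',M'+L']$, one on $X^f$ to control the floor-crossing boundary --- combined with the careful ordering of quantifiers between $(\epsilon',\kappa',\delta',N')$ on the hypothesis side and $(\epsilon,\kappa,\delta,N)$ on the conclusion side, essentially as in the proof of the WR-analogue in \cite[Sec.\ 5]{Fra-Lem}.
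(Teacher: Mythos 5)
Your plan follows the same route as the paper: use Lemma~\ref{erg}/Remark~\ref{adam} to control the Birkhoff sums $f^{(n)}(x)/n$, use ergodicity of $T^f_{\pm\gamma}$ to control the density of boundary-strip visits, and transfer the base-level Birkhoff closeness \eqref{posi}--\eqref{nega} into flow-level orbit closeness \eqref{fir}--\eqref{sec}, with the forward/backward dichotomy carried through verbatim. So the architecture is right.

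However, there is a concrete quantitative gap in the way you pass from the flow window to the base window. You set $M:=\lceil M'\int_X f\,d\mu/\gamma\rceil$ and $L:=\lfloor L'\int_X f\,d\mu/\gamma\rfloor$ and then assert that ``the Birkhoff estimate on $A$ forces $k_n\in[M',M'+L']$'' for every $n\in[M,M+L]$. This does not follow. The Birkhoff estimate only gives $f^{(m)}(x)=m\int f\,d\mu\,(1+e_m)$ with $|e_m|$ bounded by the precision $\eta$ you fed into Lemma~\ref{erg}; consequently the base index $k_M$ corresponding to flow-time $M\gamma$ can miss $M'$ by roughly $\eta M'$, and $k_{M+L}$ can overshoot $M'+L'$ by the same order. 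Since the base window has length $L'\approx\kappa'M'$, this error is of the \emph{same order} as the window unless you take $\eta\ll\kappa'$ and then shrink $[M,M+L]$ accordingly. You never fix $\eta$ in terms of $\kappa'$ (indeed you choose $N'$ before invoking the hypothesis, so the ordering of quantifiers needs to be reworked to let $\eta$ depend on $\kappa'=\kappa(\epsilon')$, which in turn depends only on $\epsilon'$). The paper sidesteps the whole issue by defining the flow bounds exactly from the Birkhoff sums of $f$ at the chosen point, namely $M':=\frac{f^{(-M)}(x)-s}{-\gamma}$ and $L':=\frac{f^{(-M-L)}(x)-f^{(-M)}(x)}{-\gamma}$, so that flow-time $k\gamma$ for $k$ in the flow window lands at base index exactly in $[M,M+L]$ with no approximation. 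You should either adopt this exact formula, or explicitly tighten the Birkhoff precision to $\eta\ll\kappa'$ and then retract the endpoints of $[M,M+L]$ by $O(\eta M')$. As a minor further point, the remark about possibly replacing $P$ by $P\cup(-P)$ is unnecessary: the same $p$ works in both time directions, since the fibre-coordinate difference works out to $(s-s')-(f^{(\pm k_n)}(x)-f^{(\pm k_n)}(y)-p)$ in either case.
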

\begin{proof} Fix $\gamma>0$ such that  $T^f_\gamma$ is ergodic. Fix also $\frac{1}{\|f\|_{L^1}}>4\epsilon>0$. Apply Remark~\ref{adam} with the constants $\epsilon/4,\kappa$ to $f$ and $T$, $T^{-1}$, respectively to obtain constants $D_1,D_2>0$ such that
for $x\in A$, $\mu(A)>1-\epsilon/2$ (the set $A$ is the intersection of two relevant sets), we have
\begin{equation}\label{adam1}
D_1<\frac{f^{(M)}(x)}{M},\frac{f^{(M+L)}(x)-f^{(M)}(x)}{L},\frac{f^{(-M)}(x)}{-M},\frac{f^{(-M-L)}(x)-f^{(-M)}(x)}{-L}<D_2.
\end{equation} 
 
Fix $N>\frac{2}{D_2\epsilon^2}$. Let $\epsilon':=\min(\frac{D_1\epsilon}{8(\gamma+D_2)},\frac{\epsilon}{16})$.
Let $\kappa':=\frac{D_1}{D_2}\kappa(\epsilon')$. Let us consider the set $X(\epsilon)$ on which $(T_t^f)_{t\in \R}$ is $\frac{\epsilon}{8}$- ``almost continuous'', that is
$$X(\epsilon):=\{(x,s)\in X^f\;:\; \frac{\epsilon}{8}<s<f(x)-\frac{\epsilon}{8}\}.$$
Now, we will use ergodicity of $T^f_\gamma$ and $T^f_{-\gamma}$. It follows that there exist $N_0:=N(\epsilon)$ and a set $Z:=Z(\epsilon)$ with $\mu^f(Z)>1-\frac{\epsilon}{2}$ and for every $(x,s)\in Z$ and $n\geq N_0$
\begin{equation} \left|\frac{1}{n}\sum_{k=1}^n \chi_{X(\epsilon)}T^f_{ki}(x,s)-(1-\frac{\epsilon}{4})\right|<\frac{\kappa}{\kappa+1}\frac{\epsilon}{8}
\end{equation}
for $i=\gamma,-\gamma$. Moreover, since $f\in L^1(X,\mathscr{B},\mu)$, there exists a set $V=V(\epsilon)\subset X$ with $\mu(V)>1-\frac{\epsilon}{2}$ and such that for every $x\in V$, $f(x)<\frac{2}{\epsilon^2}$. Define the set $Z':=Z\cap\{(x,s)\in X^f\::\; x\in V\}\cap \{(x,s)\in X^f\;: \; x\in A\}$, then $\mu^f(Z')>1-\epsilon$.

Let $\delta':=\delta(\epsilon', 2\gamma \frac{\max(N_0,N)}{D_1})$. Take two points $(x,s),(x',s')\in Z'$, such that $x\neq x'$ and $d^f((x,s),(x',s'))<\delta'$.
It follows by definition of $d^f$ that $d(x,x')<\delta'$ and therefore by our assumptions there exist $M,L\geq 2\gamma\frac{\max(N_0,N)}{D_1}$ with $\frac{L}{M}\geq \kappa$, $p\in P$ and such that for all $n\in [M,M+L]$ either $|f^{(n)}(x)-f^{(n)}(y)-p|<\epsilon'$ or  for all $n\in [M,M+L]$, $|f^{(-n)}(x)-f^{(-n)}(y)-p|<\epsilon'$. We will consider the second case (the proof in the first case goes along the same lines).

Let us define
$$M':=\frac{f^{(-M)}(x)-s}{-\gamma}\;\;\;\text{and}\;\;\;
L':=\frac{f^{(-M-L)}(x)-f^{(-M)}(x)}{-\gamma}.$$
By~(\ref{adam1}) it follows that $L'=\frac{f^{(-L-M)}(x)-f^{(-M)}(x)}{-L}\frac{-L}{-\gamma}>
\frac{-LD_1}{-\gamma}>N$. Similarly, $\frac{f^{(-M)}(x)-s}{-\gamma}>\frac{f^{(-M)}(x)}{-\gamma}>\frac{MD_1}{\gamma}$, so $M'>N$. Moreover, since $(x,s)\in Z'$,  $s<\frac{2}{\epsilon^2}<ND_2\leq M D_1D_2/(2\gamma)$ (by the choice of $N$) and therefore
$$\frac{L'}{M'}\geq \frac{LD_1}{\gamma}\frac{-\gamma}{f^{(-M)}(x)-s}\geq \frac{LD_1}{MD_2}\geq \kappa'.$$
It follows by the properties of $M',L'\in \N$ that if $(x,s)\in Z'\subset Z$ we have
\begin{equation}\label{et2}\left|\frac{1}{L'}\sum_{k=M'}^{M'+L'}\chi_{X(\epsilon)}T^f_{-k\gamma}
-(1-\epsilon)\right|<\frac{\epsilon}{2}.\end{equation}
Take any $k\in [M',M'+L']$ such that $T^f_{-k\gamma}\in X(\epsilon)$ it follows that there exist a number $m_k\in [M,M+L]$ such that $T^f_{-k\gamma}(x,s)=(T^{m_k}x,-k\gamma+s-f^{(-m_k)}(x))$, where, by the fact that $T^f_{-k\gamma}\in X(\epsilon)$, $f^{(-m_k-1)}(x)+\frac{\epsilon}{8}<-k\gamma+s<f^{(-m_k)}(x)-\frac{\epsilon}{8}$. Using additionally the inequality $|s-s'|<\delta'$ we hence obtain
$$f^{(-m_k-1)}(x')\leq f^{(-m_k-1)}(x)+p-\epsilon'\leq f^{(-m_k-1)}(x)+p+\frac{\epsilon}{8}-\delta'<-k\gamma+s'+p.$$
A similar reasoning shows that
$$-k\gamma+s'+p<-k\gamma+s+p+\delta\leq f^{(-m_k)}(x)+p-\frac{\epsilon}{8}+\delta \leq f^{(-m_k)}(x').$$
Therefore, by the definition of the special flow, we have  $T^f_{-k\gamma+p}(x',s')=(T^{m_k}x',-k\gamma+s'+p-f^{(-m_k)}(x'))$. Consequently,
$$d^f(T^f_{-k\gamma}(x,s),T^f_{-k\gamma+p}(x',s'))=d^f((x,s),(x',s'))+|f^{(-m_k)}(x)-f^{(-m_k)}(x')-p|<\epsilon.$$
Now, the number of $k\in [M',M'+L']$ such that $T^f_{-k\gamma}\in X(\epsilon)$ is, by (\ref{et2}), at least $(1-\epsilon)L'$ and for any such $k$ we get that $d^f(T^f_{-k\gamma}(x,s),T^f_{-k\gamma+p}(x',s'))<\epsilon$. Hence
$$\frac{1}{L'}\left|\{k\in [M',M'+L']\;:\;d^f(T^f_{-k\gamma}(x,s),T^f_{-k\gamma+p}(x',s'))<\epsilon \}\right|>1-\epsilon.$$
This gives us the switchable $R(\gamma,P)$-property. \end{proof}

\vspace{1ex}

Note that if a flow $(T_t^f)_{t\in\R}$ is ergodic then the set of $\eta\in \R$ such that $T^f_\eta$ is not ergodic, is at most countable and therefore, as a  direct consequence of Proposition~\ref{cocy}, we get that
$(T^f_t)_{t\in\R}$ enjoys SWR-property.

\section{SWR-property for smooth special flows with singularities} \label{sec.proofs}
In this section we will use Proposition \ref{cocy} to prove SWR-property for  special flows given by the assumptions in Theorem~\ref{main} and Theorem~\ref{boun}. 
In all the sequel we assume $\{a_1,..,a_k\}$ are badly approximable  by $\a$ with a constant $C>1$ (see Definition \ref{gen.pos}).



\begin{lemma}\label{base2}  Let $s\in \N$ be such that $q_{s+1}>4Cq_s$ and $x\in \T$. Then 
$$\{x+j\alpha\}_{j=0}^{[\frac{q_{s+1}}{4C}]} \cap\bigcup_{i=1}^k\left[\frac{-1}{4Cq_s}+a_i,a_i+\frac{1}{4Cq_s}\right]\subset \{x+rq_s+i_0\alpha\}_{r=0}^{[\frac{q_{s+1}}{4Cq_s}]},$$
where $i_0\in \{0,...,q_s-1\}$ is such that $\rho(\{x+v\alpha\}_{v=0}^{q_s-1},\{a_i\}_{i=1}^k)=\rho(x+i_0\alpha,\{a_i\}_{i=1}^k)$. For finite sets $A,B\subset \T$, we use the notation $\rho(A,B)=\min_{a\in A,b\in B}\|a-b\|$.
\end{lemma}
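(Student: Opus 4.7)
The plan is to write each $j$ in the range $[0,[q_{s+1}/(4C)]]$ using Euclidean division by $q_s$, namely $j=rq_s+i$ with $0\le i<q_s$, and then show that whenever $x+j\a$ falls into one of the intervals $[a_\ell-1/(4Cq_s),a_\ell+1/(4Cq_s)]$, the residue $i$ must coincide with the distinguished $i_0$ singled out by the badly--approximable hypothesis \eqref{basic}. The range of $r$ will automatically lie in $[0,[q_{s+1}/(4Cq_s)]]$.

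The first step is the transport from $j\a$ to $i\a$. Since $(q_s)$ are best approximation denominators, $\|q_s\a\|\le 1/q_{s+1}$ by \eqref{red0}, so
\[
\|rq_s\a\|\le r\|q_s\a\|\le \frac{r}{q_{s+1}}\le \frac{1}{4Cq_s}
\]
for every $r\le [q_{s+1}/(4Cq_s)]$. Hence $x+j\a=x+i\a+rq_s\a$ lies within $1/(4Cq_s)$ of $x+i\a$ on $\T$. Consequently, if $x+j\a\in [a_\ell-1/(4Cq_s),a_\ell+1/(4Cq_s)]$ for some $\ell$, the triangle inequality gives
\[
x+i\a\in \bigcup_{\ell=1}^k\Bigl[a_\ell-\frac{1}{2Cq_s},\,a_\ell+\frac{1}{2Cq_s}\Bigr].
\]

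Now I invoke that $\{a_1,\dots,a_k\}$ are badly approximable by $\a$ with constant $C$: applied to the point $x$ and the level $s$, Definition~\ref{gen.pos} says that there is at most one $i\in\{0,\dots,q_s-1\}$ with $x+i\a$ inside the union of $1/(2Cq_s)$-neighbourhoods of the singularities. Since the existence of such an $i$ already implies in particular that the minimum distance $\rho(\{x+v\a\}_{v=0}^{q_s-1},\{a_\ell\})$ is attained at this same $i$ (any other $v$ being at distance $\ge 1/(2Cq_s)$ from every $a_\ell$), this forces $i=i_0$, where $i_0$ is the minimiser defined in the statement.

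Putting the pieces together, every $j\in\{0,\dots,[q_{s+1}/(4C)]\}$ for which $x+j\a$ lies in the prescribed union of intervals necessarily has the form $j=rq_s+i_0$ with $r\in\{0,\dots,[q_{s+1}/(4Cq_s)]\}$, which is exactly the claimed inclusion. The only delicate point is checking that the factor of $2$ lost when passing from $j\a$ to $i\a$ is compensated by shrinking the neighbourhood from $1/(2Cq_s)$ (used in the badly--approximable assumption) to $1/(4Cq_s)$ (the radius in the statement); the definition of badly approximable with constant $C$ is precisely tailored to accommodate this loss, so no further obstacle arises.
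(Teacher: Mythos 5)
Your proposal is correct and follows essentially the same approach as the paper's proof. The paper argues by contrapositive (for $j\neq i_0$ in $\{0,\dots,q_s-1\}$, shows $x+rq_s\alpha+j\alpha$ stays $\geq 1/(4Cq_s)$ away from every $a_i$, using $\|rq_s\alpha\|\leq 1/(4Cq_s)$ and \eqref{basic}), whereas you argue directly (if $x+j\alpha$ lands in a $1/(4Cq_s)$-neighbourhood, transport to the residue $i$ at cost $1/(4Cq_s)$, apply \eqref{basic} to identify $i=i_0$); the key estimate $\|rq_s\alpha\|\leq r\|q_s\alpha\|\leq 1/(4Cq_s)$ and the doubling of the radius from $1/(4Cq_s)$ to $1/(2Cq_s)$ are identical in both.
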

Proof: Take any $0<j<q_s-1$, $j\neq i_0$. By \eqref{basic}, $x+j\alpha \notin \bigcup_{i=1}^k[\frac{-1}{2Cq_s}+a_i,a_i+\frac{1}{2Cq_s}]$ . It follows that for every $r=0,...,\max\{[\frac{q_{s+1}}{4Cq_s}],1\}$,
$$\|x+rq_s+j\alpha-(x+j\alpha)\|=\|rq_s\alpha\|\leq r\|q_s\alpha\|\leq \frac{1}{4Cq_s}.$$
Hence, we conclude by \eqref{basic}.\hfill $\square$

\vspace{1ex}

The following lemma is a simple consequence of the Denjoy-Koksma inequality.
\begin{lemma}\label{koksi} Let $h\in C^2(\T\setminus\{0\})$ be positive and decreasing on $(0,1)$ with $h'$ is increasing on $(0,1)$ and $\lim_{x\to 0^+}h(x)=\lim_{x \to 0^+}(-h'(x))=+\infty$. Denote by $c_0:=\inf_\T h$. Then for every $x\in\T$ and $s\in \N$ we have the following estimates:
$$-q_s\left(h(\frac{1}{2q_s})-c_0\right)-2h'(\frac{1}{2q_s})>h'^{(q_s)}(x)\geq h'(x+j\alpha) -q_s\left(h(\frac{1}{2q_s})-c_0\right)+2h'(\frac{1}{2q_s})$$
where $j\in \{0,...,q_s-1\}$ is such that $\min_{\ell\in\{0,...,q_s-1\}}|x+\ell\alpha|=x+j\alpha$.
\end{lemma}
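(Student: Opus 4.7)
The plan is to isolate the contribution of the orbit point closest to the singularity $0$ and then apply the Denjoy--Koksma inequality to a truncation of $h'$ that has bounded variation on $\T$. Write $y_\ell := x+\ell\alpha \bmod 1 \in [0,1)$; the condition $\min_\ell |x+\ell\alpha| = x+j\alpha$ in the statement means $y_j\in(0,\tfrac12)$. The key geometric input is the three-distance theorem applied to the finite orbit of length $q_s$ under $R_\alpha$: every arc cut out by $\{y_0,\ldots,y_{q_s-1}\}$ on $\T$ has length at least $\|q_{s-1}\alpha\|\geq \tfrac{1}{q_s+q_{s-1}}\geq\tfrac{1}{2q_s}$. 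In particular the orbit point immediately to the right of $y_j$ sits at position $\geq y_j+\tfrac{1}{2q_s}\geq\tfrac{1}{2q_s}$, while the orbit point immediately to the left of $0$ in $\T$ has real representative $\geq 1-\tfrac{2}{q_s}\geq\tfrac{1}{2q_s}$. Consequently every $y_\ell$ with $\ell\neq j$ satisfies $y_\ell\geq\tfrac{1}{2q_s}$.

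Next I would introduce the truncation $g(y):=h'(\max\{y,\,1/(2q_s)\})$ on $\T$. Since $h'$ is monotone increasing and nonpositive on $(0,1)$, $g\in \mathrm{BV}(\T)$ with total variation $\mathrm{Var}_\T(g)\leq -2h'(1/(2q_s))$, coming from one monotone piece on $[1/(2q_s),1)$ and one jump at $0\in\T$, each of size $h'(1^-)-h'(1/(2q_s))$. Using $h(1^-)=c_0$, a direct computation gives
$$\int_\T g\,dy \;=\; \frac{h'(1/(2q_s))}{2q_s}\;-\;\bigl(h(\tfrac{1}{2q_s})-c_0\bigr),$$
so that $q_s\!\int_\T g=\tfrac12 h'(1/(2q_s))-q_s(h(1/(2q_s))-c_0)$. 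The Denjoy--Koksma inequality applied at the best denominator $q_s$ then yields $\bigl|\sum_\ell g(y_\ell)-q_s\!\int_\T g\bigr|\leq -2h'(1/(2q_s))$.

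Finally, by the first step $g(y_\ell)=h'(y_\ell)$ for every $\ell\neq j$ and $g(y_j)=h'(1/(2q_s))$, so
$$h'^{(q_s)}(x)\;=\;\sum_\ell g(y_\ell)\;+\;h'(y_j)\;-\;h'(\tfrac{1}{2q_s}).$$
Substituting the Denjoy--Koksma estimate one obtains an identity of the form $h'^{(q_s)}(x)=h'(y_j)-\tfrac12 h'(1/(2q_s))-q_s(h(1/(2q_s))-c_0)+E$ with $|E|\leq -2h'(1/(2q_s))$. The upper bound then follows by discarding the nonpositive $h'(y_j)$ after the monotonicity estimate $h'(y_j)\leq h'(1/(2q_s))$ and choosing the sign of $E$ in the favourable direction; the lower bound follows by keeping $h'(y_j)$ and choosing the opposite sign. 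Since $h'(1/(2q_s))<0$, the resulting coefficients collapse to the $\pm 2h'(1/(2q_s))$ that appear in the statement. The one delicate point, and the whole reason for invoking the three-distance theorem, is to guarantee that \emph{only one} orbit point falls in $(0,1/(2q_s))$: were this to fail, multiple simultaneous close approaches to the singularity would contribute additional highly negative terms not captured by the single $h'(y_j)$ on the lower-bound side.
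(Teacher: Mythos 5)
Your overall strategy mirrors the paper's: truncate $h'$ at scale $1/(2q_s)$, apply Denjoy--Koksma to the truncation, and correct for the single orbit point that may enter the near-singular zone, with the three-gap theorem guaranteeing there is at most one such point. The only real difference is the choice of truncation level: the paper sets $\bar h=0$ on $[0,\tfrac{1}{2q_s})$ and $\bar h=h'$ elsewhere, whereas you take $g(y)=h'(\max\{y,\tfrac{1}{2q_s}\})$, i.e.\ you truncate to the value $h'(\tfrac{1}{2q_s})$ rather than to $0$. That choice is what introduces a gap.

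The problem is that your final decomposition $h'^{(q_s)}(x)=\sum_\ell g(y_\ell)+h'(y_j)-h'(\tfrac{1}{2q_s})$, and in particular the claim $g(y_j)=h'(\tfrac{1}{2q_s})$, silently assumes $y_j\le\tfrac{1}{2q_s}$. But nothing forces the closest orbit point to lie inside $(0,\tfrac{1}{2q_s})$: it is perfectly possible that the whole orbit stays at distance $\ge\tfrac{1}{2q_s}$ from $0$. In that case $g(y_j)=h'(y_j)$ and the correct identity is simply $h'^{(q_s)}(x)=\sum_\ell g(y_\ell)$. Plugging Denjoy--Koksma then gives only $h'^{(q_s)}(x)\ge \tfrac12 h'(\tfrac{1}{2q_s})-q_s\bigl(h(\tfrac{1}{2q_s})-c_0\bigr)+2h'(\tfrac{1}{2q_s})$, and to reach the stated lower bound you would need $\tfrac12 h'(\tfrac{1}{2q_s})\ge h'(y_j)$, which fails as soon as $h'(y_j)$ is closer to $0$ than $\tfrac12 h'(\tfrac{1}{2q_s})$ --- e.g.\ whenever $y_j$ is far from the singularity. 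The extra term $\tfrac{h'(1/(2q_s))}{2q_s}$ you gained in $\int_\T g$ is exactly what ruins this case. The paper sidesteps the issue precisely because its truncation to $0$ gives $\int_\T\bar h = c_0-h(\tfrac{1}{2q_s})$ with no extra term, and the indicator $\chi_{[0,1/(2q_s)]}(y_j)$ in $h'^{(q_s)}(x)=\bar h^{(q_s)}(x)+h'(y_j)\chi_{[0,1/(2q_s)]}(y_j)$ vanishes automatically when $y_j\ge\tfrac{1}{2q_s}$, after which $h'(y_j)<0$ makes the lower bound immediate. To repair your write-up, either switch to the paper's truncation, or explicitly split into the two cases $y_j<\tfrac{1}{2q_s}$ and $y_j\ge\tfrac{1}{2q_s}$ and handle the second one separately; as written, the second case is not covered.
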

\begin{proof} Fix $s\in \N$. Consider

$$
\bar{h}(x)=\begin{cases} 0,\; \text{if} \;x\in [0,\frac{1}{2q_s})\\
h'(x), \;\text{otherwise.}
\end{cases}$$

Then $\bar{h}\in BV(\T)$ and we use the Denjoy-Koksma inequality to obtain $|\bar{h}^{(q_s)}(x)-q_s\int_\T \bar{h}(t)d\lambda|<{\rm Var}\,\bar{h}$. But $\int_\T \bar{h}(t)d\lambda=h(\frac{1}{2q_s})-c_0$ and ${\rm Var}\,\bar{h} \leq-  2h'(\frac{1}{2q_s})$. We then finish since $h'^{(q_s)}(x)=\bar{h}^{(q_s)}(x)+h'(x+j\alpha)\chi_{[0,\frac{1}{2q_s}]}(x+j\alpha)$ and $h'<0$. \end{proof}

\begin{lemma}\label{upp} Let $f\in C^2(\T\setminus\{a_1,...,a_k\})$. Assume that for $i=1,...,k$, $\lim_{x\to a_i^+}\frac{f'(x)}{r_i(x-a_i)}$ and $\lim_{x\to a_i^-}\frac{f'(x)}{r_i(a_i-x)}$  exist and are finite, where $0\leq r_i\in C^2(\T\setminus\{0\})$ is decreasing on $(0,1)$ with $r_i'$ increasing on $(0,1)$.
Then there exists a constant $H>0$ such that
$$|f'(x)|<H \left(\sum_{i=1}^k -r_i'(x-a_i)-r_i'(a_i-x)\right),$$
for each $x\in\T$.
\end{lemma}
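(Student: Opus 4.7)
My plan is a local-to-global patching argument: I will use the hypothesis to obtain local control of $|f'(x)|$ by $-r_i'$ in a one-sided neighborhood of each singularity $a_i$, then use compactness and continuity to bound $|f'|$ away from all the singularities, and finally combine the two estimates using the fact that every term on the right-hand side of the desired inequality is nonnegative.

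First, I would fix $\eta>0$ smaller than half of $\min_{i\neq j}\|a_i-a_j\|$, so that the one-sided neighborhoods $(a_i,a_i+\eta)$ and $(a_i-\eta,a_i)$ are pairwise disjoint. The finiteness of the one-sided limits in the hypothesis (which, for consistency with Definition \ref{gen.pos} and \eqref{gi}, I interpret with $r_i'$ in the denominator) allows me, after shrinking $\eta$, to pick $M>0$ such that $|f'(x)|\leq M\cdot(-r_i'(x-a_i))$ for $x\in(a_i,a_i+\eta)$ and $|f'(x)|\leq M\cdot(-r_i'(a_i-x))$ for $x\in(a_i-\eta,a_i)$, for every $i$. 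Since $r_j$ is decreasing on $(0,1)$, each $-r_j'\geq 0$, so every term of
$$
S(x):=\sum_{j=1}^k\bigl(-r_j'(x-a_j)-r_j'(a_j-x)\bigr)
$$
is nonnegative, and the local bounds immediately give $|f'(x)|\leq M\cdot S(x)$ on the union of all these one-sided neighborhoods.

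Next, on the compact complement $K:=\T\setminus\bigcup_i(a_i-\eta,a_i+\eta)$, $f$ is $C^2$ so $|f'|$ is bounded by some constant $M_0$, and both $x-a_j$ and $a_j-x$ (taken in $[0,1)$) lie in $[\eta,1-\eta]$ for $x\in K$. In the logarithmic and power-like settings of Theorems \ref{main} and \ref{boun}, $-r_j'>0$ on all of $(0,1)$, so $S$ is continuous and strictly positive on $K$, giving $S(x)\geq c_0>0$ there by compactness; hence $|f'(x)|\leq(M_0/c_0)\cdot S(x)$ on $K$. Taking $H:=\max(M,M_0/c_0)+1$ then yields the strict inequality throughout $\T\setminus\{a_1,\ldots,a_k\}$.

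The only (very minor) obstacle is obtaining the positive lower bound $S(x)\geq c_0$ on $K$, which relies on $-r_j'$ being strictly positive away from the singularity. This is automatic for the logarithmic and power-type singularities that drive Theorems \ref{main} and \ref{boun}; in greater generality one can alternatively enlarge $\eta$ so as to absorb any region where $-r_j'$ becomes too small into the ``singular'' part of the argument.
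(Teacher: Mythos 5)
Your argument is correct and follows essentially the same route as the paper: use the limit hypothesis (correctly read with $r_i'$, not $r_i$, in the denominator — the statement as printed has a typo, as both the paper's proof and equations \eqref{asu}, \eqref{gi} confirm) to dominate $|f'|$ by $-r_i'$ near each singularity, bound $|f'|$ by a constant on the compact complement, and absorb that constant using a uniform positive lower bound on the right-hand side. The only cosmetic difference is how that lower bound is produced: the paper exploits monotonicity of $r_i'$ to set $C_0:=\min_i|\sup_\T r_i'|$ as a global lower bound for $-r_i'$, whereas you invoke continuity and compactness on $K$; both silently require $\sup r_i'<0$, a point you are commendably explicit about.
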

\begin{proof}
 By assumptions, there exists a constant $z_0>0$ such that for every $i=1,...,k$ and for every $x\in [-z_0+a_i,a_i)$ $|f'(x)|<-Kr_i'(a_i-x)$ and for every $x\in (a_i,a_i+z_0]$; $|f'(x)|<-Kr_i'(x-a_i)$ for some constant $K\geq 0$. \\
Moreover, since $f'\in C^1(\T\setminus \{a_1,...,a_k\})$; it follows that there exists a constant $R>0$ such that for every $x\in \T\setminus \bigcup_{i=1}^k[-z_0+a_i,a_i+z_0]$, $|f'(x)|<R$. Denote by $C_0:=\min_{i=1,...,k}\left|\sup_{\T}r_i'\right|$
Now, the constant $H:=2\max_{i=1,...,k}\{K,\frac{R}{C_0}\}$ satisfies the assertion of the lemma. \end{proof}

\subsection{Proof of Theorem~\ref{main}}

We may assume WLOG that $\sum_{i=1}^k(A_i-B_i)>0$.
Fix $1 \gg \epsilon>0$ and $N\in \N$. Let $d=\sum_{i=1}^k(A_i-B_i)-\min(\frac{1}{10},\frac{\sum_{i=1}^k(A_i-B_i)}{2})>0$. Define $\kappa=\kappa(\epsilon):=\frac{\epsilon m_0d}{64(d+1)Hk}$. By Lemma \ref{erg} for $\epsilon/2$ and $\kappa$, we get $N_0=N(\epsilon/2,\kappa)$ and a set $A:=A(\epsilon/2,\kappa)$ with $\lambda(A)>1-\frac{\epsilon}{2}$ such that (\ref{mne}) holds for $x\in A$. Define 

\begin{equation}\label{p;}P:=\left[-2(d+1),\frac{-dm_0}{32C}\right]\cup
\left[\frac{dm_0}{32C},2(d+1)\right],
\end{equation}
($C$ commes from Definition \ref{gen.pos} of badly approximable singularities).

In the sequel, we will assume $s$ is an integer sufficiently large $s \geq s_1$, $s_1=s_1(\eps,N)$ to be determined later, for now assume that $\kappa q_{s_1}>N$.

By assumptions 1. and 2. of Theorem \ref{main}, if $s \geq s_0$ and $s_0(\eps)$ is sufficiently large we will have 
\begin{equation}\label{mon}
\frac{|h'(\frac{x_s}{4C})|}{q_sh(\frac{1}{2q_s})}<\frac{\epsilon}{2},\;\;
\frac{x_s}{2C}q_sh\left(\frac{1}{2q_s}\right)>\frac{1}{\epsilon},\;\; h\left(\frac{1}{2q_s}\right)>8C,
\end{equation}
and $\sum_{s\geq s_0,s\notin K_\alpha}x_s q_s<\frac{\epsilon}{16k}$. Set $v_s:=\frac{x_s}{4C}$.

Let $$B_s:=\{x\in\T \;:\: x-q_s\alpha,...,x,...,x+(q_s-1)\alpha \notin \bigcup_{i=1}^k(-4v_s+a_i,a_i+4v_s)\}$$ 
and $Z':=\bigcap_{s\geq s_0,s\notin K_\alpha}B_s$. 

Define $Z:=Z'\cap A$. Observe that $\lambda(Z)\geq 1- \epsilon$ ($\lambda(B_s)\geq 1-16kv_sq_s $). 
Set $\delta:=\frac{1}{q_{s_1}h(\frac{1}{2q_{s_1}})}$. Consider $x,y\in Z$ with $0<\|x-y\|<\delta$ (we assume that $x<y$).

The following proposition  implies Theorem \ref{main} due to Proposition \ref{cocy}.

\begin{proposition} \label{ratner.log} Consider $x,y\in Z$ with $0<\|x-y\|<\delta$. Then there exists $p \in P$, $M,L\geq \kappa M \geq N$ such that either \eqref{posi}  holds for $n \in [M,M+L]$ or \eqref{nega} holds for $n \in [M,M+L]$. 
\end{proposition}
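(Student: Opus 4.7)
The strategy is to verify, for each admissible pair $(x,y)$, the hypothesis of Proposition \ref{cocy}, which then immediately delivers the switchable $R(\gamma, P)$-property. My first step is to calibrate the scale: given $x, y \in Z$ with $0 < \|x-y\| < \delta$, I choose $s = s(x,y) \geq s_1$ so that the product $\|x-y\| \cdot q_s h(1/(2q_s))$ is of order unity; condition 3 of Theorem \ref{main} guarantees that such a scale exists (the sequence $q_s h(1/(2q_s))$ does not increase too fast between consecutive $s$), and at this scale the natural shift has magnitude comparable to $d$. Applying the mean value theorem term by term I then write
$$f^{(n)}(y) - f^{(n)}(x) \;=\; (y-x) \sum_{j=0}^{n-1} f'(\xi_j), \qquad \xi_j \in [x+j\alpha, y+j\alpha],$$
and analogously for $f^{(-n)}$. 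The problem thus reduces to controlling the Birkhoff sum $(f')^{(n)}(\xi)$ uniformly in $n$ across the window $[M,M+L]$.

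I decompose $(f')^{(n)}(\xi)$ into a smooth contribution, proportional at scale $n \sim q_s$ to $-\bigl(\sum_i(A_i - B_i)\bigr)\, n\, h(1/(2q_s))$ (obtained from Lemma \ref{koksi} applied to the singular parts of $f'$ given by \eqref{asu}, together with Lemma \ref{upp} bounding the smooth remainder), plus an atomic contribution from close approaches of the orbit to the singularities. The switchable feature enters here. By Lemma \ref{base2} combined with the fact that $x \in B_s \subset Z$, among the $2q_s - 1$ orbit points $\{x + j\alpha : j \in \{-q_s+1,\ldots,q_s-1\}\}$ at most one index $j_0$ lies in the critical neighborhood $\bigcup_i(a_i - 1/(4Cq_s),\, a_i + 1/(4Cq_s))$, and this single index carries the one significant jump of the Birkhoff sum of $f'$. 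If $j_0 \geq 0$ I switch to the backward iteration \eqref{nega}; if $j_0 < 0$ I use forward iteration \eqref{posi}. In either direction I set $M \sim q_s$ and $L \sim \lfloor \kappa q_s \rfloor$, which satisfies $L/M \geq \kappa$ and $M, L \geq \kappa q_{s_1} > N$, with the window placed so that the critical jump at $\pm j_0$ lies past the window in the chosen time direction. On such a window the atomic contribution is frozen and $(f')^{(\pm n)}(\xi)$ varies only through its smooth part.

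I then define $p$ to be the value $f^{(\pm M)}(y) - f^{(\pm M)}(x)$: by the above this is approximately $(y-x)\bigl(\sum_i(A_i - B_i)\bigr) q_s h(1/(2q_s))$, whose magnitude is bounded below by $d m_0/(32C)$ and above by $2(d+1)$, so $p \in P$ as given by \eqref{p;}. The total variation of $f^{(\pm n)}(y) - f^{(\pm n)}(x)$ over $[M, M+L]$ is then $O(|y-x| \cdot L \cdot h(1/(2q_s))) = O(\kappa)$, which fits within $\varepsilon$ by the definition $\kappa = \varepsilon m_0 d / (64(d+1)Hk)$. The main obstacles I anticipate are twofold: (i) carefully handling the dichotomy $s \in K_\alpha$ versus $s \notin K_\alpha$ --- $Z$ has measure at least $1-\varepsilon$ only through the summability provided by condition 2 ($\sum_{i\notin K_\alpha} q_i x_i < \infty$), while for $s \in K_\alpha$ the critical-neighborhood width $1/(4Cq_s)$ is already moderate compared with $1/q_{s+1}$ and an alternative direct estimate must replace Lemma \ref{base2}; and (ii) absorbing both the second-order MVT correction $(y-x)^2 (|f''|)^{(n)}$ and the cumulative effect of medium-range orbit visits (distances in the dyadic shells $[1/(4Cq_{s-r-1}), 1/(4Cq_{s-r})]$ around each $a_i$) into the $\varepsilon$-budget, which is precisely what condition 1 on $h'(x_s/(4C))/(q_s h(1/(2q_s)))$ is designed to make possible.
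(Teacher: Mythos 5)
Your plan captures the right high-level mechanism (accumulate drift to reach $P$, then hold it within $\varepsilon$ over a proportional window; use bad approximability of the singularities to choose a favorable time direction), but it has a genuine gap in the scale calibration that the paper's proof is designed precisely to overcome.

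You choose $s$ so that $\|x-y\|\,q_s h(1/(2q_s))$ is ``of order unity'' and then take $M\sim q_s$, claiming condition~3 ensures $q_s h(1/(2q_s))$ does not jump too much between consecutive $s$. That claim is false: condition~3 bounds only the ratio $h(1/(2q_s))/h(1/(2q_{s+1}))$ below by $m_0$, and gives no control on $q_{s+1}/q_s$, which is unbounded for a generic $\alpha\in\cD\cap\cE$. With $s$ defined by \eqref{xoy}, the quantity $\|x-y\|\,q_s h(1/(2q_s))$ can be as small as $\tfrac{q_s h(1/(2q_s))}{q_{s+1}h(1/(2q_{s+1}))}$, hence arbitrarily small, so a drift of the form $p\approx (y-x)\bigl(\sum_i(A_i-B_i)\bigr)q_s h(1/(2q_s))$ after $M\sim q_s$ steps need not land in $P$ --- it can be far below the floor $dm_0/(32C)$. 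The paper instead iterates the per-period drift over $n_0$ cycles of $q_s$, with $n_0$ ranging up to $\max(q_{s+1}/(8Cq_s),1)$ (Lemma \ref{dri}); after $n_0 q_s$ steps the drift is $\sim n_0\|x-y\|q_s h(1/(2q_s))$, and the extreme case $n_0\approx q_{s+1}/q_s$ combined with \eqref{xoy} and condition~3 is exactly what guarantees the drift crosses $dm_0/(32C)$ (see \eqref{ven}); the increment bound \eqref{jed} then lets it land in $P$. This multi-period accumulation, validated by Lemma \ref{swit} ensuring the orbit avoids the $v_s$-neighborhoods of the $a_i$'s over the full range $[0,q_{s+1}/(4C)]$ in at least one direction, is the essential missing step in your argument.

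A second, related issue: you implicitly assume one direction choice (forward or backward, selected by the sign of your $j_0$) serves both to build the drift on $[0,M]$ and to hold it on $[M,M+L]$. But the control window $[M,M+L]$ has length $L\sim\kappa n_0 q_s$, which in general corresponds to a different scale $q_m$ with $m<s$; there a fresh near-singularity index appears, and the favorable direction for the control phase need not coincide with that of the drift phase. The paper decouples the two (Proposition \ref{prop.drift.log} Parts a and b), re-applies Lemma \ref{swit} at the new scale $m$ to $(T^{n_0q_s}x,T^{n_0q_s}y)$, and in the final reduction (proof of Proposition \ref{ratner.log}) compensates for a backward control direction by shifting $M$ from $n_0q_s$ to $[n_0q_s/(1+\kappa)]$. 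Your bound ``$O(|y-x|\cdot L\cdot h(1/(2q_s)))$'' for the variation on $[M,M+L]$ also presupposes no new close approach to a singularity in that window, which is precisely the content of Lemma \ref{imba} and the verification \eqref{kip}; this needs an argument, not just the definition of $Z$.
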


Proposition \ref{ratner.log} can be deduced from the following main result on the drift of the Birkhoff sums of a function with logarithmic like singularities.   Let $s:=s(x,y)$ ($s\geq s_0$) be unique such that

\begin{equation}\label{xoy}\frac{1}{q_{s+1}h(\frac{1}{2q_{s+1}})}< \|x-y\|\leq\frac{1}{q_sh(\frac{1}{2q_s})}.\end{equation}

We will assume that $q_{s+1}>2q_s$. If not, then  in (\ref{xoy}),  $\frac{m_0}{2q_sh(\frac{1}{2q_s})}<\|x-y\|$ and we repeat the considerations below in the time interval $[q_{s-1},q_s]$. In other words, in this case we will see the drift between $x$ and $y$ before time $q_s$.

\begin{proposition} \label{prop.drift.log} $ \ $  Consider $x,y\in Z$ as in \eqref{xoy}. 

\noindent {\bf Part a}  There exists  $n_0\in \{1,...,\max(\frac{q_{s+1}}{8Cq_s},1)\}$ satisfying 
\begin{equation}\label{epsi}
f^{(n_0q_s)}(x)-f^{(n_0q_s)}(y)\in P
\end{equation}
or 
\begin{equation}\label{epsi2}
f^{(-n_0q_s)}(x)-f^{(-n_0q_s)}(y)\in P.
\end{equation}

\noindent {\bf Part b} Let $X=T^{n_0q_s}x$ and $Y=T^{n_0q_s}y$ if \eqref{epsi} holds, and $X=T^{-(n_0q_s+1)}x$ and $Y=T^{-(n_0q_s+1)}y$ if \eqref{epsi2} holds. 
  For $n=1,...,[\kappa n_0q_s]+1$ the following holds
\begin{equation}\label{choi}
\text{A.}\;|f^{(n)}(X)-f^{(n)}(Y)|<\epsilon\;\;\;\; \text{or} \;\;\;\;\text{B.}\;|f^{(-n)}(X)-f^{(-n)}(Y)|<\epsilon.
\end{equation}
\end{proposition}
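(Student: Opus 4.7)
The idea is to quantify the drift $f^{(N)}(x)-f^{(N)}(y)$ via a Denjoy--Koksma analysis of the derivative cocycle and to exploit the asymmetry $d:=\sum_i(A_i-B_i)>0$ to produce a net shear of order $d\cdot q_sh(\tfrac{1}{2q_s})$ per block of $q_s$ iterates. Multiplied by $\|x-y\|$, this shear drives both parts.

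\textbf{Part (a).} I would start from the mean value identity
\[
f^{(N)}(x)-f^{(N)}(y)=-\int_x^y\sum_{j=0}^{N-1}f'(t+j\a)\,dt
\]
and, for $N=n_0 q_s$, split the inner sum into $n_0$ successive blocks of $q_s$ iterates. Lemma \ref{upp} decomposes $f'$ into weighted sums of $h'(\cdot-a_i)$ and $h'(a_i-\cdot)$; Lemma \ref{koksi}, applied to each piece with weights $A_i$ on the right and $-B_i$ on the left of each $a_i$, yields the block estimate
\[
\sum_{j=0}^{q_s-1}f'(t+j\a)=-d\,q_sh(\tfrac{1}{2q_s})+O\bigl(|h'(\tfrac{1}{2q_s})|+|h'(v_s)|\bigr).
\]
Both errors are $o(q_sh(\tfrac{1}{2q_s}))$ by hypothesis~(1); the second comes from replacing $t$ by $x$ in $f'$ and uses $\|x-y\|\ll v_s$ together with $x\in B_s$ to guarantee that $t+j\a$ stays at distance $\geq 3v_s$ from every $a_i$. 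Summing the $n_0$ blocks and integrating in $t$ gives
\[
f^{(n_0q_s)}(x)-f^{(n_0q_s)}(y)=n_0\,\Delta\,(1+o(1)),\qquad \Delta:=d\,q_sh(\tfrac{1}{2q_s})(y-x).
\]
From \eqref{xoy} and hypothesis~(3), $|\Delta|\in[dm_0q_s/q_{s+1},\,d]$; taking $n_0:=\lceil dm_0/(32C|\Delta|)\rceil$ gives $n_0\leq q_{s+1}/(8Cq_s)$ and $n_0\Delta\in P$. Whether \eqref{epsi} or \eqref{epsi2} is realised is determined by the sign of $y-x$ and by where the closest-approach index $i_0$ from Lemma \ref{base2} lies inside $\{0,\ldots,q_s-1\}$.

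\textbf{Part (b).} Since the base rotation is an isometry, $\|X-Y\|=\|x-y\|$. For $n\in[1,[\kappa n_0q_s]+1]$, the same block-by-block estimate applied to the at most $\lceil\kappa n_0\rceil$ complete blocks, together with the partial-block bound $O(|h'(v_s)|\cdot\|X-Y\|+q_sK_f\cdot\|X-Y\|)=o(1)$ (with $K_f$ a uniform bound on $|f'|$ away from the singularities), gives
\[
|f^{(\pm n)}(X)-f^{(\pm n)}(Y)|\leq\kappa n_0|\Delta|+o(1)\leq 2\kappa(d+1)+o(1)<\eps
\]
for $s\geq s_1$ large, by the prescribed $\kappa$. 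The choice between A and B records in which of the two time directions there is no additional close approach to a singularity during the first block, and is forced by the same dichotomy on the position of $i_0$ in $\{0,\ldots,q_s-1\}$.

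\textbf{Main obstacle.} The delicate step is proving that the single-block Denjoy--Koksma estimate extends \emph{uniformly} across $n_0$ consecutive blocks with $n_0\leq q_{s+1}/(8Cq_s)$. Two sources of error must be controlled simultaneously: the mean-value error $f'(\xi_j+j\a)-f'(x+j\a)$ in neighbourhoods of singularities, which requires $\xi_j$ to remain on the same side of $a_i$ as $x+j\a$ and uses $\|x-y\|\ll v_s$; and the cumulative $O(|h'(\tfrac{1}{2q_s})|)$ per block from Lemma \ref{koksi}, tamed by hypothesis~(1). The bad-approximability assumption on $\{a_1,\ldots,a_k\}$ (Definition \ref{gen.pos}) is essential: Lemma \ref{base2} then guarantees that each block contains at most one iterate within $1/(4Cq_s)$ of any $a_i$, so that the cocycle genuinely behaves as $n_0$ times the one-block shear and no spurious close-approach contribution accumulates.
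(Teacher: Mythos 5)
Your Part~(a) is on the right track and follows the same block-wise Denjoy--Koksma strategy as the paper (the paper's Lemmas \ref{xcv} and \ref{dri} give exactly the two-sided per-block estimate you encode in the $(1+o(1))$ factor, and the discrete intermediate-value argument on the sequence $e_R = f^{(Rq_s)}(x)-f^{(Rq_s)}(y)$ produces $n_0$). One small imprecision: the forward/backward dichotomy is not driven by the sign of $y-x$ (WLOG $x<y$); it is driven by whether $\|x+(t_1-q_s)\alpha - a_{r_1}\|$ is smaller or larger than $\|x+t_1\alpha - a_{r_1}\|$ for the unique near-resonant index $t_1\in\{0,\ldots,q_s-1\}$ --- that is precisely what Lemma \ref{swit} formalises, and it is what guarantees that all the orbit points over the relevant time window stay outside the $v_s$-neighbourhood of the singularities.

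Part~(b) has a genuine gap. After the shift by $n_0q_s$, you need to control $|f^{(\pm n)}(X)-f^{(\pm n)}(Y)|$ for $n$ up to $[\kappa n_0 q_s]+1$, and $\kappa n_0 q_s$ is in general neither $\ll q_s$ nor a multiple of $q_s$. The paper handles this by locating a denominator $q_m$ (with $s_0\le m\le s$) commensurable with $\kappa n_0 q_s$ --- either $q_m\asymp\kappa n_0 q_s$ with $m\in K_\alpha$, or $lq_m\le\kappa n_0 q_s\le (l+1)q_m\le q_{m+1}/(8C)$ --- and then applies Lemma \ref{swit} and Lemma \ref{imba} \emph{at scale $m$ to the shifted pair $X,Y$}. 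Crucially this requires verifying the non-resonance condition \eqref{nr1} for $T^{n_0q_s}x,T^{n_0q_s}y$ at scale $m$ (the verification around \eqref{kip} uses $x\in B_m$ and $n_0\le q_{s+1}/(8Cq_s)$). Your proposal instead reuses the $q_s$-scale blocks and invokes a ``uniform bound $K_f$ on $|f'|$ away from the singularities'' for the partial block; this does not work, because $f'$ is unbounded and the shifted starting point $X=T^{n_0q_s}x$ may be near a singularity in one of the two time directions --- the whole point is that the dichotomy of forward vs.\ backward must be re-decided at the scale $m$, not inherited from the original $i_0\in\{0,\ldots,q_s-1\}$. You also do not use the key a priori bound $n_0 q_s h(\tfrac{1}{2q_s})\le \tfrac{2(d+1)}{d\|x-y\|}$ (the paper's \eqref{en0}), which is what makes the final estimate $8kH\|x-y\|\cdot q_m h(\tfrac{1}{2q_m})\lesssim \kappa$ close.
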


The rest of this section is devoted to the proof of Proposition \ref{prop.drift.log}. But before this we show how it implies Proposition \ref{ratner.log} and thus Theorem \ref{main}.  

\noindent {\emph{Proof of Proposition \ref{ratner.log}.}  Suppose \eqref{epsi} holds, the other case being similar. If A.  from (\ref{choi}) holds, set $M:=n_0q_s$, $L:=[\kappa M]+1$ and $p:=f^{(n_0q_s)}(x)-f^{(n_0q_s)}(y) \in P$. If B. holds, we set $M:=[\frac{n_0q_s}{1+\kappa}]$, $L:=[\kappa M]+1$ and $p:=f^{(n_0q_s)}(x)-f^{(n_0q_s)}(y)\in P$. Notice that in both cases $M,L\geq \frac{1}{2}  \kappa n_0q_s\geq \frac{1}{2}\kappa q_s\geq \frac{1}{2} \kappa q_{s_1}\geq N$. Finally, using $A.$ or $B.$ and the cocycle identity for the Birkhoff sums and the triangular inequality   shows that  for $n\in [M,M+L]$, $|f^{(n)}(x)-f^{(n)}(y)-p|< \eps$ for some $p\in P$.}
\carre

\subsubsection{Proof of Proposition \ref{prop.drift.log} Part a.}

For $m \in \N$, we will often use the following non resonance conditions of a pair of points $(x,y)$ with the singularities $\{a_1,\ldots,a_k\}$. 

\begin{equation} \label{nr1}     \bigcup_{j=0}^{q_m-1}T^j[x,y]\cap \bigcup_{i=1}^k[-2v_m+a_i,a_i+2v_m]=\emptyset     \end{equation}

\begin{equation}\label{100a}\bigcup_{j=0}^{\max(\left[\frac{q_{m+1}}{4C}\right],q_m)}T^j[x,y]\cap \bigcup_{i=1}^k[-v_m+a_i,a_i+v_m]=\emptyset.
\end{equation}
\begin{center}
\end{center}
\begin{equation}\label{100b} \bigcup_{j=0}^{\max(\left[\frac{q_{m+1}}{4C}\right],q_m)}T^{-j}[x,y]\cap \bigcup_{i=1}^k[-v_m+a_i,a_i+v_m]=\emptyset.
\end{equation}

\begin{lemma}\label{swit} Let $x,y\in \T$ as in  \eqref{xoy}. Then for every $m$ such that $s_0\leq m\leq s$, if we have at least one of the following  
\begin{enumerate}
    \item if $m\notin K_\alpha$ and \eqref{nr1} is satisfied 
\item if $m\in K_\alpha$ and  $q_{m+1}\geq 2q_m$,
\end{enumerate}
then we have at least one of \eqref{100a} or \eqref{100b}.

\end{lemma}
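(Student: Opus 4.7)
The plan is to exploit the unidirectional drift of $T^{q_m}$, which acts as a rotation by $\delta_m\|q_m\alpha\|$ for a fixed sign $\delta_m\in\{+,-\}$, and to choose between forward and backward iteration so that the interval $[x,y]$ is always steered away from the singularities. A common first step would be to apply Lemma~\ref{base2}: when $q_{m+1}>4Cq_m$ (automatic in Case 1 for $m\geq s_0$ large enough, since $m\notin K_\alpha$ forces $q_{m+1}\geq 1/x_m$ and $x_m$ can be made as small as needed via hypothesis 1 of Theorem~\ref{main}), the orbit $\{T^j x\}_{j=0}^{[q_{m+1}/(4C)]}$ enters $\bigcup_i[a_i-1/(4Cq_m),a_i+1/(4Cq_m)]$ only at times $j=rq_m+i_0$ for $r\in\{0,\ldots,[q_{m+1}/(4Cq_m)]\}$ and the specific $i_0\in\{0,\ldots,q_m-1\}$ supplied by the badly-approximable condition. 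Since $v_m<1/(4Cq_m)$, the same localization controls the smaller $v_m$-neighborhoods, and the total drift of these iterates away from $T^{i_0}[x,y]$ is at most $1/(4Cq_m)$. Applying \eqref{basic} with $x=a_j$ and $i_0=0$ also yields that distinct singularities are separated by $\geq 1/(2Cq_m)$.

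In Case 1 the hypothesis \eqref{nr1} gives $\dist(T^{i_0}[x,y],\{a_1,\ldots,a_k\})\geq 2v_m$. Letting $a_*$ be the singularity closest to $T^{i_0}[x,y]$, I would distinguish according to which side of $a_*$ the interval lies on and the sign $\delta_m$, and choose the direction (forward or backward) along which the drift \emph{increases} the distance to $a_*$. In that chosen direction, the iterates stay at distance $\geq 2v_m$ from $a_*$, and the singularity separation $\geq 1/(2Cq_m)$ combined with the total drift budget $\leq 1/(4Cq_m)$ prevents the iterates from approaching any other $a_{i'}$ within $v_m$. This yields \eqref{100a} or \eqref{100b}.

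In Case 2 the role of \eqref{nr1} is played by the smallness $v_m<1/(4Cq_{m+1})$, which via the three-distance lower bound $\|q_m\alpha\|\geq 1/(q_m+q_{m+1})$ gives $2v_m<\|q_m\alpha\|$. I would apply the badly-approximable condition with $s=m+1$ to $x$ (for the forward orbit) and to $T^{-R}x$ with $R=\max([q_{m+1}/(4C)],q_m)\leq q_{m+1}/2$ (for the backward orbit), yielding at most one potential bad iterate per direction within $[0,R]$. If only one direction contains a bad iterate the other immediately gives the conclusion; if both do, with bad iterates $j_\pm\in[0,R]$, then $(j_++j_-)\alpha$ would fall within $2v_m$ of a difference of two singularities, which for $0<j_++j_-<q_{m+1}$ is ruled out by the three-distance lower bound $\|j\alpha\|\geq 1/(2q_{m+1})$ together with the badly-approximable separation. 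The remaining degenerate case $j_+=j_-=0$, which would require $[x,y]$ itself to be within $v_m$ of a singularity, is excluded because $x,y\in Z$, so $x$ belongs to every $B_{m'}$ with $m'\in[s_0,m]\setminus K_\alpha$ and is thereby forced to stay at distance $\geq 4v_{m'}$ from the singular set.

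The main obstacle will be the delicate geometric case analysis in Case 1 with several singularities: one must verify that the drift chosen to move the interval away from the nearest singularity $a_*$ does not inadvertently bring it within $v_m$ of some other $a_{i'}$. This is ultimately controlled by the quantitative separation $|a_i-a_j|\geq 1/(2Cq_m)$, which is twice the total drift budget $1/(4Cq_m)$. The need for the \emph{switchable} form of the Ratner property, rather than the one-sided weak Ratner property, is transparent here: the favorable direction depends on the pair $(x,y)$ through the position of $T^{i_0}[x,y]$ relative to its closest singularity, so no single choice of direction controls all pairs simultaneously.
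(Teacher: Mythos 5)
Your overall strategy matches the paper's: use the unidirectional drift of $T^{q_m}$, localize potential bad iterates via Lemma~\ref{base2}, and choose the direction (forward or backward) by comparing which side of the nearest singularity the critical iterate lies on. However, there are two issues.

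First, the claim ``Applying \eqref{basic} with $x=a_j$ and $i_0=0$ also yields that distinct singularities are separated by $\geq 1/(2Cq_m)$'' is incorrect. The condition \eqref{basic} bounds the number of \emph{indices} $i_0$ for which $x+i_0\alpha$ lands in the union of singular neighborhoods; it does not count how many intervals are hit by a single iterate. Taking $x=a_j$, the index $i_0=0$ gives $a_j\in[a_j-\tfrac{1}{2Cq_s},a_j+\tfrac{1}{2Cq_s}]$ for free, and if another $a_{j'}$ is within $\tfrac{1}{2Cq_s}$ of $a_j$ then $a_j$ lies in the $a_{j'}$-interval as well, but $i_0=0$ is still just one index — no contradiction. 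Indeed the paper's Remark after Definition~\ref{gen.pos} shows the condition is compatible with $a_i-a_j\in\Q+\Q\alpha$, so arbitrarily close singularities are allowed. Since your Case~1 argument explicitly invokes this false separation to control the drift's encroachment on a second singularity $a_{i'}$, that step has a genuine gap. The paper's own argument avoids stating a separation bound: it works directly with the distance of $x+(rq_m+t_1)\alpha$ to the singular set, using \eqref{chu} and Lemma~\ref{base2} to keep everything $\geq 2v_m$.

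Second, in Case~2 you apply \eqref{basic} twice (once forward from $x$, once forward from $T^{-R}x$), which can produce up to two bad iterates and forces you into an additional ``three-distance'' argument with an edge case at $j_++j_-=q_{m+1}$. The paper applies \eqref{basic} \emph{once}, with $s=m+1$, to the single point $x-u_m\alpha$ where $u_m=[\tfrac{q_{m+1}}{2}]$, so the whole two-sided window $\{x+j\alpha\}_{j=-u_m}^{u_m-1}$ (of length $<q_{m+1}$) contains at most one bad index $j_0$. One then simply looks at the sign of $j_0$ to pick the clean direction. This is both shorter and cleaner; I'd recommend adopting it.
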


\begin{proof}
Assume $m\notin K_\alpha$. Since $m\geq s_0$ we have that $q_{m+1}\geq \frac{1}{x_m}\geq \frac{8k}{\epsilon}q_m\geq 16Cq_m$. Let $t_1\in [0,q_m-1]\cap \Z$ and $r_1\in \{1,...,k\}$ be such that $$\rho(\{x+j\alpha\}_{j=0}^{q_m-1},\{a_i\}_{i=1}^k)=\|x+t_1\alpha-a_{r_1}\|.$$
It follows by \eqref{basic} that for $t_1\neq j\in [0,q_m-1]\cap \Z$, we have
$$x+j\alpha\notin \bigcup_{r=1}^k\left[-\frac{1}{2Cq_m}+a_r,a_r+\frac{1}{2Cq_m}\right].$$
If $\|x+(-q_m+t_1)\alpha-a_{r_1}\|<\|x+t_1\alpha-a_{r_1}\|$,we will show \eqref{100a}, if we have the reversed inequality, we will show the validity of \eqref{100b}. Suppose 
\begin{equation}\label{chu}\|x+(-q_m+t_1)\alpha-a_{r_1}\|<\|x+t_1\alpha-a_{r_1}\|
\end{equation}
holds (the proof in the second case is analogous). For $j\notin \{t_1,t_1+q_m,...,t_1+[\frac{q_{m+1}}{4Cq_m}]q_m-1\}$, by Lemma \ref{base2} (for $i_0=t_1$), 
$$x+j\alpha\notin \bigcup_{i=1}^k\left[-\frac{1}{4Cq_m}+a_i,a_i+\frac{1}{4Cq_m}\right].$$ 
By (\ref{xoy}) and \eqref{mon}, for $j=0,...,[\frac{q_{m+1}}{4C}]-1$, 
\begin{equation}\label{xjy}\|(x+j\alpha)-(y+j\alpha)\|=\|x-y\|\leq v_s\leq \frac{\epsilon}{8kq_s}\leq \frac{1}{8Cq_s}\leq \frac{1}{8Cq_m},\end{equation}
hence \eqref{100a} follows for $j\notin \{t_1,t_1+q_m,...,t_1+[\frac{q_{m+1}}{4Cq_m}]q_m-1\}$. If $j=rq_m+t_1$ for some $r=0,...,[\frac{q_{m+1}}{4Cq_m}]-1$, then by \eqref{chu} (and using $t_1\leq q_m-1$) 
$$\rho(x+j\alpha,\{a_i\}_{i=1}^k)\geq r\|q_m\alpha\|+\rho(x+t_1\alpha,\{a_i\}_{i=1}^k)\geq \rho(x+t_1\alpha,\{a_i\}_{i=1}^k)\stackrel{\eqref{nr1}}{\geq}  2v_m.$$
 Therefore, since $\|(x+j\alpha)-(y+j\alpha)\|=\|x-y\|\leq v_s\leq v_m$ (see \eqref{xjy}), \eqref{100a}  follows for $j=rq_m+t$.\\
\indent If $m\in K_\alpha$ (recall that, by assumption $q_{m+1}\geq 2q_m$).\\
Denote by $u_m:=[\frac{q_{m+1}}{2}]$ and consider $x-u_m\alpha,...,x,...,x+(u_m-1)\alpha$ (the length of this orbit is smaller than $q_{m+1}$). By \eqref{basic}, there exists at most one $j_0\in [-u_m,u_m-1]$ such that 
$$x+j_0\alpha\in \bigcup_{i=1}^k\left[-\frac{1}{2Cq_{m+1}}+a_i,a_i+\frac{1}{2Cq_{m+1}}\right].$$
If $j_0<0$, we will show \eqref{100a}, if not, we will show the validity of \eqref{100b}. Suppose $j_0<0$. Then for every $j=0,...,u_m-1$,
\begin{equation}\label{cot}x+j\alpha\;,y+j\alpha\notin \bigcup_{i=1}^k[-v_m+a_i,a_i+v_m].\end{equation}
 Indeed, it follows that for $j=0,...,u_m-1$, $x+j\alpha\notin \bigcup_{i=1}^k[-\frac{1}{2Cq_{m+1}}+a_i,a_i+\frac{1}{2Cq_{m+1}}]$. But $m\in K_\alpha$, therefore $\frac{1}{2Cq_{m+1}}>\frac{x_m}{2C}=2v_m$. Moreover, by \eqref{xoy}, $\|x-y\|\leq v_s\leq v_m$. Therefore \eqref{100a} follows.
\end{proof}

\begin{lemma}\label{xcv}There exists $s'\in \N$ such that for every $s\geq s'$ and any points $x<y\in \T$ such that \eqref{nr1} is satisfied for $x,y$ and $m=s$, then 
\begin{equation}\label{A.}(d+1)q_sh\left(\frac{1}{2q_s}\right)\|x-y\|\geq f^{(q_s)}(x)-f^{(q_s)}(y)\geq dq_sh\left(\frac{1}{2q_s}\right)\|x-y\|.\end{equation}
\end{lemma}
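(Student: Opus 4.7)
The plan is to write $f^{(q_s)}(y)-f^{(q_s)}(x)=\int_x^y f'^{(q_s)}(z)\,dz$ and to estimate the Birkhoff sum $f'^{(q_s)}(z)$ uniformly in $z\in[x,y]$. The non-resonance hypothesis~\eqref{nr1} with $m=s$ ensures that every point $z+j\alpha$ with $z\in[x,y]$ and $0\leq j<q_s$ lies at distance strictly greater than $2v_s$ from every singularity $a_i$. Combined with hypothesis~(1) of Theorem~\ref{main}, which says $|h'(v_s)|=o(q_sh(\tfrac{1}{2q_s}))$ (hence $|h'(2v_s)|=o(q_sh(\tfrac{1}{2q_s}))$ by monotonicity of $|h'|$), Lemma~\ref{koksi} yields
\[
h'^{(q_s)}(z-a_i)=-q_s\,h\!\left(\tfrac{1}{2q_s}\right)(1+o(1))
\]
uniformly in $z\in[x,y]$ and in $i$, since the ``exceptional'' term $h'(\text{closest orbit to }a_i)$ is bounded in modulus by $|h'(2v_s)|$ and thus absorbed in the $o(1)$.

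To encode the asymmetry between $A_i$ and $B_i$, decompose $f'=\sum_{i=1}^k\tilde f'_i+r$, where
\[
\tilde f'_i(u):=A_i\,h'(u-a_i)\,\chi_{(0,1/2]}(u-a_i)-B_i\,h'(a_i-u)\,\chi_{(0,1/2]}(a_i-u)
\]
captures the asymmetric singular behaviour of $f$ at $a_i$ coming from~\eqref{asu}, while the remainder $r$ has at each $a_i$ a singularity strictly weaker than $h'$, namely $r(u)/h'(|u-a_i|)\to 0$ as $u\to a_i$. A two-scale truncation argument then gives $\sum_{j=0}^{q_s-1}r(z+j\alpha)=o(q_sh(\tfrac{1}{2q_s}))$ uniformly: for any $\epsilon'>0$, pick $\delta>0$ such that $|r(u)|\leq\epsilon'|h'(u-a_i)|$ in the $\delta$-neighbourhood of each $a_i$, use that $\sum_j|h'(z_j-a_i)|=O(q_sh(\tfrac{1}{2q_s}))$ by the first step, and note that $r$ is bounded on the complement of these neighbourhoods, contributing $O(q_s)=o(q_sh(\tfrac{1}{2q_s}))$.

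For the main term, split the orbit according to the sign of $u_j:=z+j\alpha-a_i$ taken in $(-1/2,1/2]$. The right half-orbit sum $\sum_{u_j\in(0,1/2]}h'(u_j)$ equals $h'^{(q_s)}(z-a_i)$ minus the contribution from $u_j\in(-1/2,0)$, which after reducing modulo $1$ sums $h'$ over $(1/2,1)$, a region where $|h'|\leq|h'(1/2)|$ is bounded, so that part is $O(q_s)$. Hence the right half-orbit sum equals $-q_sh(\tfrac{1}{2q_s})(1+o(1))$; a symmetric argument applied to the orbit of $a_i-z$ under $R_{-\alpha}$ (for which the non-resonance is equivalent and $q_s$ remains a best denominator) yields the same estimate for the left half-orbit. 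Therefore $\sum_{j=0}^{q_s-1}\tilde f'_i(z+j\alpha)=-(A_i-B_i)\,q_sh(\tfrac{1}{2q_s})(1+o(1))$, uniformly in $z$. Summing over $i$ and integrating in $z\in[x,y]$,
\[
f^{(q_s)}(x)-f^{(q_s)}(y)=\Bigl(\sum_{i=1}^k(A_i-B_i)\Bigr)\,q_s\,h\!\left(\tfrac{1}{2q_s}\right)\|x-y\|(1+o(1)).
\]
Since by definition $\sum_i(A_i-B_i)-d=\min(\tfrac{1}{10},\tfrac{1}{2}\sum_i(A_i-B_i))>0$ and $d+1-\sum_i(A_i-B_i)\geq\tfrac{1}{2}$, choosing $s'$ large enough that the $o(1)$ is smaller than the minimum of these two slacks delivers both required inequalities.

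The main obstacle will be making the several ``uniform $o(1)$'' estimates simultaneously precise: the weaker singularity of $r$ through the two-scale truncation; the closest-orbit term in Lemma~\ref{koksi} absorbed only via the combination of the $2v_s$ non-resonance lower bound and hypothesis~(1); the half-versus-full Birkhoff sum discrepancy; and the joint treatment of all $k$ singularities, where the badly approximable assumption (via Lemma~\ref{base2}) limits the number of orbit visits to each singular neighbourhood.
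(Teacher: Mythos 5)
Your proof is correct and rests on the same core ingredients as the paper's --- the Denjoy--Koksma estimate of Lemma~\ref{koksi}, the non-resonance hypothesis~\eqref{nr1} to control the term at the closest orbit point, and hypothesis~(1) of Theorem~\ref{main} to absorb that exceptional term into the $o(1)$ --- but it implements them via a different technical decomposition. The paper truncates $f'$ directly: it sets $f'$ to zero on the $\frac{1}{2q_s}$-neighbourhoods of the singularities, applies Denjoy--Koksma to the truncated function, and then computes the mean $\int_\T\bar{f'_s}$ by telescoping the fundamental theorem of calculus and invoking l'H\^{o}pital to obtain $\pm\sum_i(A_i-B_i)h(\tfrac{1}{2q_s})$; the asymmetry is captured in a single step through the telescoping integral. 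You instead split $f'=\sum_i\tilde f'_i+r$ into an explicit asymmetric model at each singularity plus a lower-order error, estimate each $\tilde f'_i^{(q_s)}$ by applying Lemma~\ref{koksi} to the half-orbits on either side of $a_i$ (the left-half via the orbit of $a_i-z$ under $R_{-\alpha}$), and dispose of $r^{(q_s)}$ by a two-scale truncation. The paper's route is more economical, shifting the work into the single mean-value integral of $\bar{f'_s}$; yours makes the origin of the $(A_i-B_i)$ factor explicit at the level of Birkhoff sums and gives a slightly cleaner isolation of the error term, at the cost of having to track the half-orbit split and the $R_{-\alpha}$ symmetry. Two small remarks: the paper actually gets the sign of $\int_\T\bar{f'_s}$ wrong in~\eqref{29} (the telescoping gives $\sum_i[f(a_i-\tfrac{1}{2q_s})-f(a_i+\tfrac{1}{2q_s})]$, not its negative), but this is harmless because only the modulus is used; your sign bookkeeping, by contrast, is internally consistent and delivers the stated positive quantity $f^{(q_s)}(x)-f^{(q_s)}(y)$ directly. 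Also, your slack on the upper side is in fact $1-\min(\tfrac{1}{10},\tfrac12\sum(A_i-B_i))\geq\tfrac{9}{10}$, a bit better than the $\tfrac12$ you state, though this does not affect the conclusion.
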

\begin{proof} By \eqref{nr1} and \eqref{xoy}, $f^{(q_s)}$ is differantiable on $[x,y]$. Therefore, there exists $\theta\in [x,y]$ such that 
$$f^{(q_s)}(x)-f^{(q_s)}(y)=\|x-y\||f'^{(q_s)}(\theta)|.$$
It is enough to show that there exist $d>0$ and $s'\in \N$ such that for $s\geq s'$
$$(d+1)q_sh\left(\frac{1}{2q_s}\right)\geq|f'^{(q_s)}(\theta)|\geq dq_sh\left(\frac{1}{2q_s}\right).$$
For $s\in \N$, define
$$
\bar{f'_s}(\theta)=\begin{cases} 0,\; \text{if} \;\theta\in \bigcup_{i=1}^k[-\frac{1}{2q_s}+a_i,a_i+\frac{1}{2q_s}]\\
f'(\theta), \;\text{otherwise.}
\end{cases}$$
It follows that $\bar{f'_s}\in BV(\T)$ and \begin{equation}\label{ksk}f'^{(q_s)}(\theta)=\bar{f'_s}^{(q_s)}(\theta)+\sum_{i\in J_s} f'(\theta+j_i\alpha)+\sum_{i\in L_s}f'(\theta+l_i\alpha),
\end{equation}
 where $J_s=\{i \in [1,k] : \exists j_i\in\{0,...,q_s-1\}:\;\theta+j_i\alpha\in [-\frac{1}{2q_s}+a_i,a_i]\}$ and $L_s:=\{i \in [1,k]: \exists l_i\in\{0,...,q_s-1\}:\;\theta+l_i\alpha\in [a_i,a_i+\frac{1}{2q_s}]\}$. Note that for every $i  \in [1,k]
$ there exists at most one $ j_i\in\{0,...,q_s-1\}:\;\theta+j_i\alpha\in [-\frac{1}{2q_s}+a_i,a_i]$.

 We use the Denjoy-Koksma inequality to $\bar{f'_s}$, to get
\begin{equation}\label{kok2}q_s\int_{\T}\bar{f'_s}d\lambda-{\rm Var}(\bar{f'_s})\leq |\bar{f'_s}^{(q_s)}(\theta)|\leq q_s\int_{\T}\bar{f'_s}d\lambda+{\rm Var}(\bar{f'_s}).\end{equation}
We have 
\begin{equation}\label{29}\int_{\T}\bar{f'_s}d\lambda=\sum_{i=1}^k f(a_i+\frac{1}{2q_s})-f(a_i-\frac{1}{2q_s})\text{ and }{\rm Var}(\bar{f'_s})=2\sum_{i=1}^k\left(f'(a_i+\frac{1}{2q_s})+f'(a_i-\frac{1}{2q_s})\right),\end{equation} (if $s\in \N$ is sufficiently large). It follows by the assumptions on $f'$ and $h'$ and \eqref{nr1}, that for every $\epsilon>0$ there exists $s'=s'(\epsilon)$ such that for $s\geq s'$, we have for every $i=1,...,k$:
\begin{equation}\label{1a}|f'(\theta+j_i\alpha)|\leq (B_i+1)|h'(\theta+j_i\alpha)|\leq (B_i+1)|h'(\frac{x_s}{4C})|\leq \epsilon q_sh(\frac{1}{2q_s})\text{ for } i \in J_s.
\end{equation}
and similarly 
\begin{equation}\label{1b} |f'(\theta+l_i\alpha)|\leq  \epsilon q_sh(\frac{1}{2q_s})\text{ for } i \in L_s.
\end{equation}
On the other hand, by  l'Hospital's rule
\begin{equation}\label{1c}  \left((A_i+\epsilon)-(B_i-\epsilon)\right)h(\frac{1}{2q_s})\geq f(a_i+\frac{1}{2q_s})-f(a_i-\frac{1}{2q_s})\geq \left((A_i-\epsilon)-(B_i+\epsilon)\right)h(\frac{1}{2q_s}).
\end{equation}
\begin{equation}\label{1d} |f'(a_i+\frac{1}{2q_s})|+|f'(a_i-\frac{1}{2q_s})|\leq \left((A_i+1)+(B_i+1)\right)|h'(\frac{1}{2q_s})|\leq \epsilon q_sh(\frac{1}{2q_s}),
\end{equation}
(by $\frac{x_s}{4C}<\frac{1}{2q_s}$).

Now, using (\ref{ksk})-\eqref{1d}, we get

$$q_sh\left(\frac{1}{2q_s}\right)\left(\left(\sum_{i=1}^k(A_i-B_i)\right)-6k\epsilon\right)\leq|f'^{(q_s)}(\theta)|\leq q_sh\left(\frac{1}{2q_s}\right)\left(\left(\sum_{i=1}^k(A_i-B_i)\right)+6k\epsilon\right).$$ 
which allows us to conclude if we assume WLOG that $\eps$ is sufficiently small.
\end{proof}

We will assume in the sequel that $s_1\geq s'$ of Lemma \ref{xcv}.

\begin{lemma}\label{dri} Let $x,y\in \T$ satisfy \eqref{xoy}. Assume $x,y$ satisfy  \eqref{100a} with $m=s$, then 
there exists $n_0\in \{1,...,\max(\frac{q_{s+1}}{8Cq_s},1)\}$ such that \eqref{epsi} holds. 
Moreover,
\begin{equation}\label{wiem}n_0q_sh\left(\frac{1}{2q_s}\right)\leq\frac{2(d+1)}{d\|x-y\|}.
\end{equation}

If $x,y$ satisfy  satisfy  \eqref{100b} with $m=s$ then 
there exists $n_0\in \{1,...,\max(\frac{q_{s+1}}{8Cq_s},1)\}$ such that
\eqref{epsi2} holds.
for some $n_0\in \{1,...,\max(\frac{q_{s+1}}{8Cq_s},1)\}$ satisfying \eqref{wiem}.
\end{lemma}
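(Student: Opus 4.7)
The plan is to iterate Lemma~\ref{xcv} along consecutive blocks of length $q_s$ and choose the number of blocks so that the accumulated drift lands in the prescribed gap $P$.

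Set $\beta := q_s h(1/(2q_s))\|x-y\|$. By \eqref{xoy} together with hypothesis $(3)$ of Theorem~\ref{main},
\[ \frac{m_0 q_s}{q_{s+1}}<\beta\leq 1.\]
Assume first that \eqref{100a} holds. The telescoping identity
\[ f^{(nq_s)}(x)-f^{(nq_s)}(y)=\sum_{j=0}^{n-1}\bigl(f^{(q_s)}(T^{jq_s}x)-f^{(q_s)}(T^{jq_s}y)\bigr),\]
combined with the fact that \eqref{100a} transfers the non-resonance needed to invoke Lemma~\ref{xcv} to each iterated pair $(T^{jq_s}x,T^{jq_s}y)$ (as long as $nq_s\leq q_{s+1}/(4C)$), and with the fact that $T$ is an isometry so that $\|T^{jq_s}x-T^{jq_s}y\|=\|x-y\|$, yields
\[ f^{(nq_s)}(x)-f^{(nq_s)}(y)\in[\,nd\beta,\,n(d+1)\beta\,].\]

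Next, take $n_0 := \lceil m_0/(32C\beta)\rceil$. The lower bound gives $n_0 d \beta \geq d m_0/(32C)$; the upper bound gives $n_0(d+1)\beta \leq (1+m_0/(32C))(d+1) \leq 2(d+1)$, using $m_0<1$ (since $h$ is decreasing) and $C>1$. Hence the accumulated drift lies in $[dm_0/(32C),2(d+1)]\subset P$, which is \eqref{epsi}. For the admissible range of $n_0$, split according to $q_{s+1}/q_s$: when $q_{s+1}\leq 8Cq_s$, the lower bound $\beta>m_0/(8C)$ forces $n_0=1$; when $q_{s+1}>8Cq_s$, the lower bound $\beta>m_0 q_s/q_{s+1}$ gives $n_0\leq 1+q_{s+1}/(32Cq_s)\leq q_{s+1}/(8Cq_s)$, and consequently $n_0 q_s\leq q_{s+1}/(4C)$, so the iteration of \eqref{100a} is legitimate at every block. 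The bound \eqref{wiem} follows from $n_0 d\beta\leq 2(d+1)$, which rearranges to $n_0 q_s h(1/(2q_s))=n_0\beta/\|x-y\|\leq 2(d+1)/(d\|x-y\|)$.

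The case \eqref{100b} runs symmetrically under backward iteration: telescope $f^{(-nq_s)}(x)-f^{(-nq_s)}(y)$ over the blocks $T^{-kq_s}[x,y]$, each inheriting the non-resonance from \eqref{100b}, and apply Lemma~\ref{xcv} block by block to obtain negative increments in $[-(d+1)\beta,-d\beta]$. The same choice of $n_0$ then places the total drift in $[-2(d+1),-dm_0/(32C)]\subset P$, giving \eqref{epsi2} together with \eqref{wiem}. The main technical point of the argument is the simultaneous book-keeping: making sure that $n_0$ is an integer $\leq \max(q_{s+1}/(8Cq_s),1)$ in every regime of $q_{s+1}/q_s$, while keeping $n_0 q_s$ within the range where \eqref{100a} (resp.\ \eqref{100b}) supplies the non-resonance at every block. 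This is exactly what fixes the factor $dm_0/(32C)$ appearing in the definition \eqref{p;} of the compact set $P$.
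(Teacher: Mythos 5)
Your proof follows the same decomposition as the paper — iterate Lemma~\ref{xcv} block by block along $x+rq_s\alpha,\,y+rq_s\alpha$, use the cocycle identity, and force the accumulated drift into $P$. The difference is in how $n_0$ is located: the paper argues by an intermediate-value step, noting that $e_0=0$, the increments $e_{R+1}-e_R$ are positive and $<d+1$, and the terminal value $e_{\max([q_{s+1}/(8Cq_s)],1)}\geq dm_0/(16C)$, so the increasing sequence must land in the gap $[dm_0/(32C),2(d+1)]\subset P$; you instead set $\beta:=q_sh(1/(2q_s))\|x-y\|$ and choose $n_0=\lceil m_0/(32C\beta)\rceil$ explicitly so that $[n_0d\beta,\,n_0(d+1)\beta]\subset[dm_0/(32C),2(d+1)]$. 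Both are valid and essentially equivalent; your version is slightly more constructive and transparent about why the constant $32C$ appears.

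One bookkeeping slip: in the case $q_{s+1}>8Cq_s$ you write $n_0\leq 1+q_{s+1}/(32Cq_s)\leq q_{s+1}/(8Cq_s)$, but the second inequality is equivalent to $q_{s+1}/q_s\geq 32C/3$, which need not hold when $8C<q_{s+1}/q_s<32C/3$. The statement you want is still true, but the right way to see it is the case split on whether $q_{s+1}/q_s$ exceeds $32C$: if $q_{s+1}<32Cq_s$ then $m_0/(32C\beta)<q_{s+1}/(32Cq_s)<1$ so $n_0=1\leq\max(q_{s+1}/(8Cq_s),1)$; if $q_{s+1}\geq 32Cq_s$ then $n_0\leq m_0/(32C\beta)+1< 2q_{s+1}/(32Cq_s)=q_{s+1}/(16Cq_s)<q_{s+1}/(8Cq_s)$. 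With this correction the admissible range for $n_0$ and the bound $n_0q_s\leq q_{s+1}/(4C)$ (when $n_0>1$) both hold, as does the observation that for $n_0=1$ the range $j=0,\ldots,q_s-1$ is covered by $\max([q_{s+1}/(4C)],q_s)\geq q_s$ in \eqref{100a}. The rest of the proof, including the derivation of \eqref{wiem} from $n_0(d+1)\beta\leq 2(d+1)$, is correct.
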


\begin{proof}  We will use repeteadly Lemma \ref{xcv} for $x,y$ replaced by $x+rq_s\alpha, y+rq_s\alpha$ respectively, $r=0,1,...,\max([\frac{q_{s+1}}{4Cq_s}]-1,0)$ (note that by \eqref{100a} the points $x+rq_s,y+rq_s$ satisfy the assumptions of Lemma \ref{xcv}). If we fix $0\leq R\leq \max([\frac{q_{s+1}}{4Cq_s}],1)$, then using \eqref{A.} for $r=0,1,...,R$, summing up the obtained inequalities and using the cocycle identity, we obtain
\begin{equation}\label{dos}R\|x-y\|(d+1)q_sh\left(\frac{1}{2q_s}\right)>
f^{(Rq_s)}(x)-f^{(Rq_s)}(y)\geq R\|x-y\|dq_sh\left(\frac{1}{2q_s}\right).\end{equation}
Let $e_R:=f^{(Rq_s)}(x)-f^{(Rq_s)}(y)$. Then $e_{R+1}-e_R=f^{(q_s)}(x+Rq_s\alpha)-f^{(q_s)}(y+Rq_s\alpha)$, so in view of \eqref{dos}, \eqref{xoy}, we obtain
\begin{equation}\label{jed}|e_{R+1}-e_R|<d+1\;\;\text{ for } R=0,...,\max([\frac{q_{s+1}}{4Cq_s}]-1,0). 
\end{equation}
Moreover, by (\ref{dos}) and (\ref{xoy}), 
\begin{equation}\label{ven}e_{\max([\frac{q_{s+1}}{8Cq_s}],1)}\geq d\max(\frac{q_{s+1}}{8Cq_s},1)q_sh(\frac{1}{2q_s})\|x-y\|\geq
dm_0\max\left(\frac{1}{8C}-\frac{q_s}{q_{s+1}},
\frac{q_s}{q_{s+1}}\right)
\geq \frac{dm_0}{16C}.\end{equation}
Therefore, by \eqref{jed}, \eqref{ven} and \eqref{p;}, there exists $n_0\in \{1,...,\max([\frac{q_{s+1}}{8Cq_s}],1)\}$ such that
$$f^{(n_0q_s)}(x)-f^{(n_0q_s)}(y)=e_{n_0}\in P$$
Moreover, by \eqref{p;} and \eqref{dos} (for $R=n_0$), 
\begin{equation}\label{en0}n_0q_sh\left(\frac{1}{2q_s}\right)\leq\frac{2(d+1)}{d\|x-y\|}.\end{equation}

In case \eqref{100b} is satisfied instead of \eqref{100a}, we show \eqref{epsi2} in  exactly the same fashion as we did for \eqref{epsi}.

\end{proof}

\bigskip

We are ready now to finish the proof of Part a. of Proposition \ref{prop.drift.log}.  If $s\notin K_\alpha$, then by the fact that $x,y\in Z\subset B_s$, it follows that 1. in Lemma \ref{swit} is satisfied with $m=s$.
If $s\in K_\alpha$ then 2. in Lemma \ref{swit} is satisfied with $m=s$. Therefore we can use Lemma \ref{swit} for $x,y$ and $m=s$. Now, by Lemma \ref{xcv},    if \eqref{100a} holds we have \eqref{epsi}, if \eqref{100b} holds we have  \eqref{epsi2}.  Part a. of Proposition \ref{prop.drift.log} is settled, we turn now to Part b.


\subsubsection{Proof of Proposition \ref{prop.drift.log} Part b.}


\begin{lemma}\label{imba} Let $x,y\in \T$ satisfy \eqref{100a} for some $m\geq s_0$, then for every 
$\N\cup\{0\}\ni l\leq \max(\frac{q_{m+1}}{8Cq_m}-1,0)$ 
\begin{equation}\label{100a'}
\text{for every } n=0,...,(l+1)q_m,\;|f^{(n)}(x)-f^{(n)}(y)|<8kH\|x-y\|(l+1)q_mh(\frac{1}{2q_m})
\end{equation}
   If  $x,y\in \T$ satisfy \eqref{100b} for some $m\geq s_0$, then
\begin{equation}\label{100b'}
\text{for every } n=0,...,(l+1)q_m,\;|f^{(-n)}(x)-f^{(-n)}(y)|<8kH\|x-y\|(l+1)q_mh(\frac{1}{2q_m}).
\end{equation}
 \end{lemma}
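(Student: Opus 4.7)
My plan is to follow the same strategy as Lemma \ref{xcv} (mean value theorem plus the Denjoy--Koksma bound on $h'$), but iterated over $l+1$ consecutive blocks of length $q_m$ rather than over a single block. Fix $n\in\{0,\ldots,(l+1)q_m\}$. The hypothesis \eqref{100a} implies that the orbit segment $\bigcup_{j=0}^{n-1}T^j[x,y]$ avoids the $v_m$-neighbourhood of every singularity $a_i$, since $n-1<(l+1)q_m\le \max(q_{m+1}/(8C),q_m)$. Hence $f^{(n)}$ is $C^1$ on $[x,y]$, and by the mean value theorem
\[
|f^{(n)}(x)-f^{(n)}(y)|\le \|x-y\|\sup_{\theta\in[x,y]}|f'^{(n)}(\theta)|.
\]
The task reduces to bounding $|f'^{(n)}(\theta)|$ by $8kH(l+1)q_mh(\tfrac{1}{2q_m})$.

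To do this, I apply Lemma \ref{upp}, which gives $|f'(z)|\le H\sum_{i=1}^k(-h'(z-a_i)-h'(a_i-z))$, and therefore
\[
|f'^{(n)}(\theta)|\le H\sum_{i=1}^k\Bigl(\sum_{j=0}^{n-1}-h'(\theta+j\alpha-a_i)+\sum_{j=0}^{n-1}-h'(a_i-\theta-j\alpha)\Bigr).
\]
I group the $j$'s into at most $l+1$ blocks of length $q_m$ (dropping any truncation, since each summand is non-negative) and, for each block $r=0,\ldots,l$ and each $i$, estimate $-h'^{(q_m)}(T^{rq_m}\theta-a_i)$ via Lemma \ref{koksi}. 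That lemma yields
\[
|h'^{(q_m)}(T^{rq_m}\theta-a_i)|\le q_mh(\tfrac{1}{2q_m})+2|h'(\tfrac{1}{2q_m})|+|h'(T^{rq_m}\theta+j_*\alpha-a_i)|,
\]
where $j_*$ is the index realising the minimum distance to the singularity. By \eqref{100a} this distance is at least $v_m=x_m/(4C)$, and since $-h'$ is decreasing on $(0,1)$ we get $|h'(\cdot)|\le|h'(v_m)|$ for the closest term; the same bound clearly dominates $|h'(\tfrac{1}{2q_m})|$. Now assumption 1 of Theorem \ref{main} gives $|h'(v_m)|=o(q_mh(\tfrac{1}{2q_m}))$, so for $m$ large each block contributes at most $2q_mh(\tfrac{1}{2q_m})$.

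Summing over the $l+1$ blocks, the two singular branches (left and right), and the $k$ singularities produces $|f'^{(n)}(\theta)|\le 4kH(l+1)q_mh(\tfrac{1}{2q_m})<8kH(l+1)q_mh(\tfrac{1}{2q_m})$, which gives \eqref{100a'}. The estimate \eqref{100b'} is proven by the symmetric argument with $T^{-j}$ in place of $T^j$ and \eqref{100b} in place of \eqref{100a}; the Denjoy--Koksma bound applies equally to forward and backward Birkhoff sums, and all distance estimates are preserved. The main technical point to watch is ensuring that the bound $|h'(v_m)|=o(q_mh(\tfrac{1}{2q_m}))$ from assumption 1 is quantitative enough to absorb all of the error terms in Lemma \ref{koksi} uniformly in $r$, and that the non-resonance condition \eqref{100a} is applied to the \emph{whole} interval $[x,y]$ rather than just its endpoints, so that the supremum in the mean value theorem is controlled for every $\theta\in[x,y]$.
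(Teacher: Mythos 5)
Your proposal is correct and follows essentially the same route as the paper's proof: mean value theorem, the bound from Lemma \ref{upp}, decomposition of the Birkhoff sum into $l+1$ blocks of length $q_m$, and the Denjoy--Koksma estimate from Lemma \ref{koksi} per block, with \eqref{100a} controlling the closest-approach term by $|h'(v_m)|$ and condition (\ref{mon}) absorbing it into a constant multiple of $q_m h(\tfrac{1}{2q_m})$. The only cosmetic difference is that the paper first uses monotonicity of $h'$ to shift the intermediate point $\theta_n$ to the endpoints $x,y$ before applying Denjoy--Koksma, whereas you take a supremum over $\theta\in[x,y]$ and invoke the uniformity of \eqref{100a} over the whole interval; both are valid.
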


\begin{proof}

 
We only give the proof of  the first case since the other is similar. For every\\
 $n=0,...,\max(\frac{q_{m+1}}{4C},q_m)$, there exists $\theta_n\in [x,y]$ such that $|f^{(n)}(x)-f^{(n)}(y)=|f'^{(n)}(\theta_n)|\|x-y\|$. Therefore, using Lemma \ref{upp}, for every $n=0,...,\max(\frac{q_{m+1}}{4C},q_m)$, we have
\begin{equation}\label{wet}
\left|f^{(n)}(x)-f^{(n)}(y)\right|\leq H\|x-y\|\left(
\sum_{i=1}^k-h'^{(n)}(\theta_n-a_i)-h'^{(n)}(a_i-\theta_n)\right).
\end{equation}
Moreover, by monotonicity of $h'$, for every $i=1,...,k$, $$-h'^{(n)}(\theta_n-a_i)\leq -h'^{(n)}(x-a_i)\text{ and }-h'^{(n)}(a_i-\theta_n)\leq -h'^{(n)}(a_i-y).$$ 
Since $-h'$ is positive, we get that \begin{equation}\label{aje}-h'^{(n)}(\theta_n-a_i)\leq -h'^{((l+1)q_m)}(x-a_i)\text{ and } -h'^{(n)}(a_i-\theta_n)<-h'^{((l+1)q_m)}(a_i-y).\end{equation}
 It follows by Lemma \ref{koksi}, (\ref{100a}) and (\ref{mon}) that for every $u=0,...,l$
$$\|x-y\||h'^{(q_m)}(T^{uq_m}x-a_i)|\leq \|x-y\|\left(q_mh(\frac{1}{2q_m})-h'(\frac{1}{2q_m})-h'(v_m)\right)\leq 4\|x-y\|q_mh(\frac{1}{2q_m}). $$
Hence, summing up over $u=0,...,l$, and using the cocycle identity, \eqref{wet} implies \eqref{100a'}.
This finishes the proof. \end{proof}

To prove Proposition \ref{prop.drift.log} \textbf{Part b.}, observe first that if $s_1$ is sufficiently large, and up to eventually changing $\kappa$ to $\kappa' = \frac{\kappa}{8C}$, one of two possibilities holds :  ${\bf 1.}$  There exists   $s_0\leq m\leq s$, $m \in K_\a$,  such that $ \kappa n_0 q_s \leq q_{m} \leq 8 C \kappa n_0 q_s$,  or ${\bf 2.}$  There exist $s_0\leq m\leq s$ and $l \geq 1$ such that  $ l q_m \leq \kappa n_0 q_s \leq (l+1) q_m \leq    \frac{q_{m+1}}{8C}$.

\noindent {\bf Case 1.}  $ \kappa n_0 q_s \leq q_{m} \leq 8 C \kappa n_0 q_s$ with $s_0\leq m\leq s$, $m \in K_\a$. Lemma \ref{swit} implies that either \eqref{100a} or \eqref{100b} holds for $T^{n_0 q_s}x,T^{n_0 q_s}y,m$. We then apply Lemma \ref{imba} to  $T^{n_0 q_s}x,T^{n_0 q_s}y,m$ with $l=0$, and according to whether we have \eqref{100a'} or  \eqref{100b'} we will get A. or B. of Proposition \ref{prop.drift.log} Part b. Indeed, suppose \eqref{100a'} holds then for $n =1, \ldots, [\kappa n_0 q_s]+1$, we have due to \eqref{en0}
\begin{align*} |f^{(n)}(x)-f^{(n)}(y)|&<8kH\|x-y\| q_mh(\frac{1}{2q_m}) \\
&<16C kH  \kappa n_0 q_s  \|x-y\|   h(\frac{1}{2q_m})\\
&<16C kH  \kappa    \frac{2(d+1)}{d} < \eps. \end{align*}

\medskip 

\noindent {\bf Case 2.} There exist $s_0\leq m\leq s$ and $l \geq 1$ such that  $ l q_m \leq \kappa n_0 q_s \leq (l+1) q_m \leq    \frac{q_{m+1}}{8C}$.

If $m\in K_\alpha$, then Lemma \ref{swit} implies that either \eqref{100a} or \eqref{100b} holds for $T^{n_0 q_s}x,T^{n_0 q_s}y,m$. 

 If $m\notin K_\alpha$, then we will first prove that  $T^{n_0 q_s}x,T^{n_0 q_s}y,m$ satisfy the hypothesis of Lemma \ref{imba}. Due to Lemma \ref{swit}, we just have to check \eqref{nr1} for  $T^{n_0 q_s}x,T^{n_0 q_s}y,m$ :
  \begin{equation}\label{kip}\bigcup_{j=0}^{q_m-1}T^j[T^{n_0q_s}x,T^{n_0q_s}y]\cap \bigcup_{i=1}^k[-2v_m+a_i,a_i+2v_m]=\emptyset.\end{equation}
Indeed, let $i_0$ and $r_1$ be such that $\rho(\{T^{n_0q_s}x+j\alpha\}_{j=0}^{q_m-1},\{a_i\}_{i=1}^k)= \|T^{n_0q_s}x+i_0\alpha-a_{r_1}\|$. It follows by \eqref{basic}, that for $i_0\neq j\in \{0,...,q_{m}-1\}$,
\begin{equation}\label{noq}T^{n_0q_s}x+j\alpha\notin \bigcup_{i=1}^k[-\frac{1}{2Cq_m}+a_i,a_i+\frac{1}{2Cq_m}].\end{equation}
Next, by the fact that $m\notin K_\alpha$ and $x\in B_m$ ($m\geq s_0$), we get that $\|x+i_0\a -a_{r_1}\|\geq 4v_m$, and therefore
$$\|x+i_0\a+n_0q_s\a -a_{r_1}\| \geq \|x+i_0\a -a_{r_1}\|-\|n_0q_s\a\| \geq 4 v_m-\frac{n_0}{q_{s+1}}\geq 4v_m-\frac{1}{8Cq_s}\geq 3 v_m,$$
(recall that $n_0\leq \frac{q_{s+1}}{8Cq_s}$)
and \eqref{kip} is thus proved ($\|T^{n_0q_s}x-T^{n_0q_s}y\|\stackrel{\eqref{xoy}}{<}v_m$).

We are now able to apply Lemma \ref{imba} to $T^{n_0 q_s}x,T^{n_0 q_s}y,m$ with $l$ such that  $l q_m \leq \kappa n_0 q_s \leq (l+1) q_m$. Now and as in case 1.,  if \eqref{100a'} holds we get A., if \eqref{100b'} holds we get B.


\subsection{Proof of Theorem \ref{boun}}
 We may assume WLOG that $A_k^2+B_k^2>0$. Let $C_k=\max(A_k,-B_k)>0$ \\
Let us define $$P:=[-12Hk (D_2+2),-\frac{C_kD_1^2}{16c}]\cup[\frac{C_kD_1^2}{16c},12Hk (D_2+2),]$$
where $H$ is from Lemma \ref{upp}, and $c$ is such that for every $s\in \N$ $q_{s+1}\leq cq_s$.
Fix $\frac{C_kD_1^2}{8c}>\epsilon>0$ and $N\in \N$. Let $\kappa:=\kappa(\epsilon)=\frac{\epsilon}{2(3D_2+2)kCH}$. We now use Lemma \ref{erg} to $f$ and $\T$ to get, $N_0=N(\epsilon/2,\kappa)$ and a set $A:=A(\epsilon/2,\kappa)$ with $\lambda(A)>1-\frac{\epsilon}{2}$ such that (\ref{mne}) holds for $x\in A$. Denote for every $s\in \N$, $x_s=\min_{i=1,...,v}x_{i,s}$ (note that, by definition, $x_s>\frac{1}{q_sh(\frac{1}{2q_s}})$). Let $s_0\in \N$ be such that for every $i=1,...,v$  and $\T\ni x\leq \frac{1}{q_{s_0}}$,
\begin{equation}\label{img}
\frac{g_i(x)}{h(x)}<\frac{\epsilon}{12kH(D_2+1)}\;\;\text{and}\;\;\frac{g_i'(x_{i,s})}{h'(\frac{1}{2q_s})}<\epsilon\min(\frac{1}{12Hk(D_2+1)},1)\;\;
 \text{for every}\; s\geq s_0,
\end{equation}
$\sum_{s\geq s_0}^{+\infty}q_sx_{i,s}<\frac{\epsilon}{8k}$ for every $i=1,\ldots,v$,
\begin{equation}\label{rtg} h(\frac{1}{2q_s})>6C,\;\;\text{for}\; s\geq s_0,
\end{equation}
and  for every $i=1,...k$
\begin{equation}\label{jnm}
\left|\frac{f'(x)}{h'(x-a_i)}\right|>\frac{A_i}{2}\;\text{for}\; x\in[a_i,a_i+\frac{1}{q_{s_0-4}}]\;\;\text{and}\;\;
\left|\frac{f'(x)}{h'(a_i-x)}\right|>\frac{B_i}{2},\;\text{for}\; x\in [-\frac{1}{q_{s_0-4}}+a_i,a_i].
\end{equation}
Define
$D_s:=\{x\in\T \;:\: x-q_s\alpha,...,x,...,x+(q_s-1)\alpha \notin (\bigcup_{i=1}^v(-x_s+a_i,a_i+x_s)\})$. It follows that $\lambda(B_s)\geq 1-4vq_sx_s\geq 1-4kq_sx_s $. Define $Z':=\bigcap_{s\geq s_0}B_s$. It follows that $\lambda(Z')>1-4k\sum_{s\geq s_0}q_sx_s\geq 1-\frac{\epsilon}{2}$. Now define $Z:=A\cap Z'$, $\lambda(Z)>1-\epsilon$.

Let $s'\geq s_0$ be such that $q_{s'-4}\geq \max\{\frac{1}{\kappa}N,N_0\}$. Define $\delta:=\frac{1}{q_{s'}h(\frac{1}{2q_{s'}})}$.

The following proposition  implies Theorem \ref{boun} due to Proposition \ref{cocy}.

\begin{proposition} \label{ratner.power} Consider $x,y\in Z$  with $0<\|x-y\|<\delta$. Then there exists $p \in P$, $M,L\geq \kappa M \geq N$ such that either \eqref{posi} or \eqref{nega} holds for $n \in [M,M+L]$. 
\end{proposition}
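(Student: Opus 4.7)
My plan is to mirror the strategy of Proposition~\ref{ratner.log}, but exploit the bounded type hypothesis $q_{s+1}\leq c q_s$ and the ratio condition~\eqref{dd} to dispense with the dichotomy $K_\alpha$ versus $K_\alpha^c$ from the logarithmic case. Given $x,y\in Z$ with $0<\|x-y\|<\delta$, I first define $s=s(x,y)$ by
\[
\frac{1}{q_{s+1}h(\tfrac{1}{2q_{s+1}})}<\|x-y\|\leq\frac{1}{q_sh(\tfrac{1}{2q_s})}.
\]
Using the bad approximability of the strong singularities $E$ with constant $C$, there is at most one $i_0\in\{0,\dots,q_s-1\}$ with $x+i_0\alpha\in\bigcup_{i=v+1}^{k}[a_i-\tfrac{1}{2Cq_s},a_i+\tfrac{1}{2Cq_s}]$. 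Depending on whether the closest return to $E$ along the $q_s$-orbit of $[x,y]$ happens in forward or backward time, I will obtain the forward or backward analogue of the non-resonance conditions \eqref{100a}/\eqref{100b} for the strong singularities on the scale $1/(C^2q_s)$, with an analogous argument to Lemma~\ref{swit}; a similar argument also controls the weak singularity returns using $x\in Z\subset B_s$ (via the $x_{i,s}$-neighborhoods).

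The core estimate will be a power-like analogue of Lemma~\ref{xcv}: assuming the forward non-resonance, I will show that
\[
\bigl|f^{(q_s)}(x)-f^{(q_s)}(y)\bigr|\;\geq\;\tfrac{C_k}{2}\,D_1\,q_sh(\tfrac{1}{2q_s})\,\|x-y\|,
\]
with an upper bound of the same order (up to $D_2$ and $H$). The proof splits $f'$ into contributions coming from the strong versus weak singularities. For strong ones I apply Denjoy-Koksma to the truncation $\bar{f'_s}$ as in Lemma~\ref{xcv}, using \eqref{jnm} to identify the leading term $\sum_{i>v}(A_i-B_i)h(\tfrac{1}{2q_s})$ and \eqref{dd} to compare $h'(\tfrac{1}{C^4q_s})$ with $q_sh(\tfrac{1}{2q_s})$; for the weak ones I use Lemma~\ref{upp} with $r_i=g_i'$, bound the Birkhoff sum of $g_i'$ by Lemma~\ref{koksi} applied to $g_i$, and invoke \eqref{img} (the ratio $g_i/h\to 0$) to show the weak contribution is a small multiple of the strong one. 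Since $\|x-y\|q_sh(\tfrac{1}{2q_s})\in[D_1/c,\,1]$ by \eqref{xoy} and \eqref{dd}, a choice of $n_0\in\{1,\dots,c\}$ will place $f^{(n_0q_s)}(x)-f^{(n_0q_s)}(y)$ (or the backward analogue) into $P$.

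Finally, for the stability part, I set $M=n_0q_s$ (respectively $\sim n_0q_s/(1+\kappa)$ in the backward/forward swap) and $L=[\kappa M]+1$, and prove an analogue of Lemma~\ref{imba}: for $n\in[M,M+L]$,
\[
\bigl|f^{(n)}(X)-f^{(n)}(Y)\bigr|\;\leq\;8kH\|x-y\|\,q_mh(\tfrac{1}{2q_m})\;<\;\epsilon
\]
after translating by $n_0q_s$, where $m$ is chosen so that $q_m\lesssim\kappa n_0q_s\lesssim q_{m+1}$, which is possible because bounded type gives $q_{m+1}/q_m\leq c$. The verification of the non-resonance condition at level $m$ for the translated pair $(T^{n_0q_s}x,T^{n_0q_s}y)$ proceeds exactly as in the logarithmic case (since $x\in B_m$ and $\|n_0q_s\alpha\|$ is much smaller than $1/(Cq_m)$), and the final bound combines Lemma~\ref{upp} with Lemma~\ref{koksi} applied separately to $h$ and to each $g_i$, again absorbing the weak contributions via \eqref{img}.

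The main obstacle I anticipate is the quantitative separation of the strong and weak singularities in the lower bound on the drift: one must guarantee that the weak singularities, whose Birkhoff sums at time $q_s$ can be sizeable, never cancel out or dominate the main term coming from $E$. This is precisely the role played by the assumption $g_i/h\to 0$ and the summability $\sum_s q_sx_{i,s}<\infty$ in \eqref{img}, but combining them cleanly with the Denjoy-Koksma truncation argument, uniformly over the pair $(x,y)$ ranging over $Z$, is the delicate point. Once this is done, the bounded type condition makes the rest of the argument (choice of $n_0$, passage from $M$ to $M+L$) substantially simpler than in the logarithmic case.
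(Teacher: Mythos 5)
Your plan attempts to carry over the staircase mechanism from Proposition~\ref{prop.drift.log}, namely a two-sided analogue of Lemma~\ref{xcv} giving
\[
\bigl|f^{(q_s)}(x)-f^{(q_s)}(y)\bigr|\asymp q_sh(\tfrac{1}{2q_s})\,\|x-y\|,
\]
followed by a choice of $n_0 q_s$, $n_0\leq c$. This does not work in the power-like setting, and for two concrete reasons.

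First, Theorem~\ref{boun} makes \emph{no} sign assumption on $\sum_{i}(A_i-B_i)$; the only hypothesis on the strong singularities is $A_i^2+B_i^2>0$. So the ``leading term'' of your Denjoy--Koksma truncation, $\sum_{i>v}(A_i-B_i)h(\tfrac{1}{2q_s})$, may well vanish. In the logarithmic case Lemma~\ref{xcv} relied essentially on the hypothesis $\sum A_i\neq\sum B_i$ of Theorem~\ref{main}; there is no such hypothesis here, and Corollary~\ref{pow} places no restriction on signs. Second, even when that quantity is nonzero, the argument of Lemma~\ref{xcv} requires the boundary contribution $|f'(\theta+j_i\alpha)|$ from the closest visits and the variation ${\rm Var}(\bar{f'_s})\asymp|h'(\tfrac{1}{2q_s})|$ to be $o\bigl(q_sh(\tfrac{1}{2q_s})\bigr)$; this is exactly assumption~1 of Theorem~\ref{main}. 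In the power-like case, \eqref{dd} asserts the \emph{opposite}: $-h'(\tfrac{1}{C^4 q_s})$ is \emph{comparable} to $q_sh(\tfrac{1}{2q_s})$ (bounded above and below). Therefore a single close (but nonresonant) visit at distance $\sim\tfrac{1}{q_s}$ produces a contribution of the same order as the whole $q_s$-block sum, with a sign determined by which side of the singularity the orbit lands on. Denjoy--Koksma plus truncation gives you a good \emph{upper} bound but cannot give the claimed uniform \emph{lower} bound on $|f'^{(q_s)}(\theta)|$ over $\theta\in[x,y]$.

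The paper's proof of Proposition~\ref{prop.drift.power} (via Lemma~\ref{lemme.drift}) takes a structurally different route that precisely accommodates this: it gives (i)~an \emph{upper} bound $|f^{(n)}(x)-f^{(n)}(y)|\leq 2k(3D_2+2)$ for all $n\leq q_{s-2}-1$ by the Denjoy--Koksma/truncation argument, and (ii)~a \emph{single large jump}: there is a time $i_0\in[q_{s-4},q_{s-2}-2]$ with $x+i_0\alpha\in[a_k,a_k+\tfrac{1}{q_{s-4}}]$ so that, by \eqref{jnm} and \eqref{dd}, $|f(x+i_0\alpha)-f(y+i_0\alpha)|\geq\tfrac{A_kD_1^2}{2c}$. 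Combined with the running upper bound, one of $f^{(i_0)}(x)-f^{(i_0)}(y)$ or $f^{(i_0+1)}(x)-f^{(i_0+1)}(y)$ lands in $P$. This ``one big jump'' mechanism replaces the progressive accumulation over $q_s$-blocks and is forced on you by \eqref{dd}. Your stability step (the Lemma~\ref{imba} analogue) is in line with what the paper does, and your recognition that bounded type simplifies the interval selection is correct; but the core drift estimate is where your plan breaks.
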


We can assume WLOG that $x<y$. Let $s:=s(x,y)$ be unique such that
\begin{equation}\label{stoj}\frac{1}{q_{s+1}h(\frac{1}{2q_{s+1}})}\leq \|x-y\|<\frac{1}{q_sh(\frac{1}{2q_s})}.\end{equation}

As in the precedent section, Proposition \ref{ratner.power} follows from 

\begin{proposition} \label{prop.drift.power} $ \ $  Consider $x,y\in Z$ as in \eqref{stoj}. 

\noindent {\bf Part a.}  There exists    $i_0 \in \{0,...,q_{s-2}-1\}$, such that 
\begin{equation}\label{epso}
|f^{(i_0)}(x)-f^{(i_0)}(y)| \in P
\end{equation}
or 
\begin{equation}\label{epso2}
|f^{(-i_0)}(x)-f^{(-i_0)}(y)| \in P.
\end{equation}

\noindent {\bf Part b.} Let $X=T^{i_0}x$ and $Y=T^{i_0}y$ if \eqref{epso} holds, and $X=T^{-i_0-1}x$ and $Y=T^{-i_0-1}y$ if \eqref{epso2} holds,
for $n=1,...,[\kappa i_0]+1$ the following holds
\begin{equation}\label{choo}
\text{A.}|f^{(n)}(X)-f^{(n)}(Y)|<\epsilon\;\;\;\; \text{or} \;\;\;\;\text{B.}\;|f^{(-n)}(X)-f^{(-n)}(Y)|<\epsilon.
\end{equation}
\end{proposition}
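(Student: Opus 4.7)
The proof of Proposition~\ref{prop.drift.power} should closely mirror that of Proposition~\ref{prop.drift.log}, with two structural simplifications from the bounded type assumption (the ``block length'' $n_0$ of the logarithmic case collapses since $q_{s+1}\leq cq_s$, and $q_{s-2}\geq q_s/c^2$, so a single $q_{s-2}$-orbit suffices) and one key difference: since we have only $A_k^2+B_k^2>0$, not a $\sum A_i\neq \sum B_i$ condition, the drift in Part~a must come from a single dominant close visit of the orbit to the strong singularity set $E$, not from an average accumulation.

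For \textbf{Part a}, I would track the cumulative drift $e_n:=f^{(n)}(x)-f^{(n)}(y)$ for $n=0,1,\ldots,q_{s-2}-1$, together with its backward analog, and establish a switchable non-resonance dichotomy as in Lemma~\ref{swit}: by the badly approximable hypothesis on $E$ applied at level $s$, among $\{x+j\alpha\}_{|j|\leq q_s-1}$ at most one index $j^{*}$ lies in the $\frac{1}{2Cq_s}$-neighborhood of $E$, and its sign determines whether we pursue the forward drift \eqref{epso} ($j^{*}\geq 0$) or the backward drift \eqref{epso2} ($j^{*}<0$). Away from this exceptional iterate, each consecutive increment $|e_{n+1}-e_n|$ is bounded by $12Hk(D_2+2)$, using Lemma~\ref{upp} combined with \eqref{dd} for the contribution near $E$, and with the negligibility of contributions from $F$ ensured by $x\in Z$ (keeping the orbit at distance $\geq x_{i,s}$ from each $a_i\in F$) and by \eqref{img}.

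At the close approach $j=j^{*}$, the orbit lies at distance on the order of $1/(Cq_s)$ from some $a_k\in E$; since $A_k^2+B_k^2>0$, property \eqref{jnm} yields $|f'|\geq \tfrac{1}{2}C_k|h'|$ there, and then the one-sided inequality in \eqref{dd} gives $|h'|\geq D_1 q_s h(1/(2q_s))$. Combined with the lower bound $\|x-y\|\geq 1/(q_{s+1}h(1/(2q_{s+1})))$ from \eqref{stoj} and the second inequality in \eqref{dd} (namely $h(1/(2q_s))/h(1/(2q_{s+1}))>D_1$), the jump $|e_{j^{*}+1}-e_{j^{*}}|$ then exceeds $C_kD_1^2/(16c)$, the lower endpoint of $P$. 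A discrete intermediate-value argument in the spirit of Lemma~\ref{dri} then produces $i_0\in\{1,\ldots,q_{s-2}\}$ with $|e_{i_0}|\in P$. For \textbf{Part b}, I set $(X,Y)=(T^{\pm i_0}x,T^{\pm i_0}y)$ and reapply the badly approximable dichotomy to $(X,Y)$ over $\{-\lceil\kappa i_0\rceil,\ldots,\lceil\kappa i_0\rceil\}$; the direction opposite the new close approach furnishes the window on which I adapt Lemma~\ref{imba} by choosing an intermediate level $m$ with $q_m$ comparable to $\kappa i_0$ (available by bounded type), decomposing the window into $O(1)$ blocks of length $q_m$, and invoking Denjoy--Koksma (Lemma~\ref{koksi}) to obtain
\[
|f^{(\pm n)}(X)-f^{(\pm n)}(Y)|\ \lesssim\ kH\,\|x-y\|\,q_mh(1/(2q_m))\ \lesssim\ kH\kappa\,\frac{q_mh(1/(2q_m))}{q_sh(1/(2q_s))}\ <\ \epsilon,
\]
where the last inequality uses \eqref{dd} iterated a bounded number of times together with the definition of $\kappa$.

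The hard part will be the simultaneous control in Part~a of the lower bound on the close-approach jump and the upper bound on every other step, while absorbing the cumulative contribution of the weak singularities $F$ via the summability $\sum_s q_s x_{i,s}<+\infty$ built into the definition of $Z$. Isolating the signed contribution of a single visit to $a_k\in E$ using \eqref{jnm}, without its being washed out by the generic upper bound of Lemma~\ref{upp}, is what makes the power-like case qualitatively different from the logarithmic one, where the signed drift arose instead from the averaged imbalance $\sum_i(A_i-B_i)\neq 0$.
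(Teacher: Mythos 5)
Your proposal captures the broad shape (a switchable non-resonance dichotomy, an upper bound on the accumulated drift, a lower bound from a visit near a strong singularity, a smaller-scale Denjoy--Koksma bound for Part~b), but several concrete steps are wrong or missing, and the overall mechanism differs from the paper's in ways that matter.

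\textbf{Direction of the dichotomy and the source of the drift.} You pursue the direction that \emph{contains} the exceptional index $j^{*}$ and locate the jump at $j^{*}$ itself. The paper does the opposite: it considers the orbit on $[-q_{s-2},q_{s-2}+1]$, notes by Definition~\ref{gen.pos} that at most one $t_s$ in this window falls in the $\tfrac{1}{2Cq_s}$-neighbourhood of $E$, and then pursues the direction that \emph{avoids} $t_s$ (so that \eqref{nr_pos} or \eqref{nr_neg} holds). The drift is then produced not at $t_s$ but at a freshly chosen $i_0\in[q_{s-4},q_{s-2}-2]$ with $x+i_0\alpha\in[a_k,a_k+\tfrac{1}{q_{s-4}}]$, which exists purely by pigeonhole in the length-$q_{s-2}$ orbit. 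Crucially this $i_0$-visit is automatically also at distance $\geq\tfrac{1}{2Cq_s}$ from $a_k$ (because of the non-resonance just established), so the jump is simultaneously bounded \emph{above}, which is what lands it in the compact set $P$. Your $j^{*}$ carries no lower bound on its distance to $E$: the badly approximable hypothesis only forbids \emph{two} such visits, not a single visit at distance $0$. If the orbit passes extremely close to $a_k$ at $j^{*}$, the single increment $|e_{j^{*}+1}-e_{j^{*}}|$ can overshoot the right endpoint of $P$ entirely, and your discrete intermediate-value argument then has nothing to grab onto.

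\textbf{Upper bound on the cumulative drift.} You bound \emph{increments} $|e_{n+1}-e_n|$ by a constant. That alone does not give the needed cumulative bound $|e_n|\leq 2k(3D_2+2)$ uniformly in $n\leq q_{s-2}$: over $q_{s-2}$ steps such increments could accumulate. The paper's first Lemmata instead bounds $|e_n|=\|x-y\|\,|f'^{(n)}(\theta_n)|$ directly, and the whole Birkhoff sum $|f'^{(q_{s-2})}(\theta)|$ is controlled by Lemma~\ref{koksi} (Denjoy--Koksma), with the single closest visit replaced by the non-resonance bound $-h'(\tfrac{1}{2Cq_s})$. You invoke Denjoy--Koksma only in Part~b; it is already indispensable in Part~a.

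\textbf{Part~b.} You propose to reapply the badly approximable dichotomy to $(X,Y)$ and use the direction opposite the new close approach. But the new close approach is at the base point $X=T^{i_0}x$ itself (index $0$), which gives no sign and hence no direction. The paper's argument is different and stronger: since $T^{i_0}x$ is the \emph{only} orbit point at distance $\leq\tfrac{1}{2Cq_m}$ from $E$ in a window of length $q_m\approx\kappa i_0$ (this again follows from \eqref{basic}), \emph{both} the forward-from-$T^{i_0+1}$ estimate \eqref{alt} and the backward-from-$T^{i_0}$ estimate \eqref{alt2} hold. The final choice between A.\ and B.\ in \eqref{choo} is then forced by whether $e_{i_0+1}$ or $e_{i_0}$ was the term that landed in $P$; it is not a free choice coming from a dichotomy.

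In short: the skeleton is right, but the pivot of the argument --- which index supplies the drift, and why its jump is controlled from above as well as from below --- is mislocated, the cumulative bound in Part~a is missing its Denjoy--Koksma step, and Part~b's selection mechanism is not the one the problem requires.
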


The rest of this section is devoted to the proof of Proposition \ref{prop.drift.power}.


Consider the orbit $x-q_{s-2}\alpha,...,x,...,x+(q_{s-2}-1)\alpha$ (the length of this orbit is smaller than $q_s$). It follows by \eqref{basic} that there exists at most one $t_s\in[-q_{s-2},q_{s-2}+1]$ such that
$x+t_s\alpha \in \bigcup_{i=v+1}^k[-\frac{1}{2Cq_{s}}+a_i,a_i+\frac{1}{2Cq_{s}}]$. Hence at least one of the following two holds :

\begin{equation} \label{nr_pos}     \bigcup_{j=0}^{q_{s-2}-1}T^j[x,y]\cap \bigcup_{i=v+1}^k[-\frac{1}{2Cq_{s}}+a_i,a_i+\frac{1}{2Cq_{s}}]=\emptyset     \end{equation}
or
\begin{equation} \label{nr_neg}    \bigcup_{j=1}^{q_{s-2}}T^{-j}[x,y]\cap \bigcup_{i=v+1}^k[-\frac{1}{2Cq_{s}}+a_i,a_i+\frac{1}{2Cq_{s}}]=\emptyset.     \end{equation}

The following Lemma directly implies the proof of Proposition \ref{prop.drift.power}.
\begin{lemma} \label{lemme.drift} If \eqref{nr_pos} then \eqref{epso} and \eqref{choo} hold. If \eqref{nr_neg} then \eqref{epso2} and \eqref{choo} hold.  \end{lemma}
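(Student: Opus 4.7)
My plan is to reduce, by time reversal, to the case \eqref{nr_pos}, since the case \eqref{nr_neg} follows symmetrically by replacing $T$ with $T^{-1}$. Unlike the logarithmic case where the drift accumulates gradually over $n_0 q_s$ steps, for power-like singularities the drift concentrates at a single step $j^\ast \in \{0,\ldots,q_{s-2}-1\}$, namely the closest approach of the forward orbit of $x$ to some strong singularity $a_i\in E$. First, I would identify this step by applying \eqref{basic} to the orbit $\{x+j\alpha\}_{j=0}^{q_{s-2}-1}$: there is at most one such $j^\ast$ at which $T^{j^\ast}x$ lies within distance $1/(2Cq_{s-2})$ of any $a_i \in E$, and under \eqref{nr_pos} this distance additionally satisfies $\|T^{j^\ast}x-a_i\|\geq 1/(2Cq_s)$. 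Bounded partial quotients give $q_s\asymp q_{s-2}$, so the closest distance lies in a window on which \eqref{dd} yields $|h'|\asymp q_sh(1/(2q_s))$.

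Next, to establish \eqref{epso}, I would study $e_k:=f^{(k)}(x)-f^{(k)}(y)$, $0\leq k\leq q_{s-2}$, in the manner of Lemma \ref{xcv}. For $k\neq j^\ast$, the increments $|e_{k+1}-e_k|$ are estimated via the mean value theorem, Lemma \ref{upp}, and a truncated Denjoy-Koksma bound (Lemma \ref{koksi}), whose cumulative contribution is $O(Hk)\cdot q_{s-2}h(1/(2q_{s-2}))\|x-y\|=O(Hk)$ by \eqref{stoj}. At $k=j^\ast$, the asymptotics \eqref{asu} together with \eqref{stoj} and the lower bound in \eqref{dd} show that the single jump $|e_{j^\ast+1}-e_{j^\ast}|$ is of order $(A_i$ or $B_i)\cdot|h'(\|T^{j^\ast}x-a_i\|)|\cdot\|x-y\|$ and lies, up to sign, in the interval $[\tfrac{C_kD_1^2}{16c},\,12Hk(D_2+2)]$. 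Weak singularities in $F$ contribute negligibly thanks to \eqref{gi}, the definition of $Z$ via the sets $D_s$, the Birkhoff bound \eqref{mne}, and the summability of $\sum_s q_s x_{i,s}$. A discrete intermediate-value argument analogous to \eqref{jed}--\eqref{ven} then picks $i_0\in\{j^\ast,j^\ast+1\}$ with $|e_{i_0}|\in P$.

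For the subsequent control \eqref{choo}, I would set $X=T^{i_0}x$, $Y=T^{i_0}y$ and choose the unique $m\in\N$ with $q_m\leq[\kappa i_0]+1<q_{m+1}$. Applying \eqref{basic} at scale $m$ to the forward and backward orbits of $X$, one of the two directions must avoid the $1/(2Cq_m)$-neighborhoods of the strong singularities, exactly as in Lemma \ref{swit}. On the safe direction, an analogue of Lemma \ref{imba} together with Lemma \ref{upp} and Denjoy-Koksma yields $|f^{(\pm n)}(X)-f^{(\pm n)}(Y)|\lesssim Hk\kappa i_0\cdot h(1/(2q_m))\cdot\|x-y\|\lesssim Hk\kappa$, which is $<\epsilon$ by the choice of $\kappa$. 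The main obstacle is the two-sided estimate on the dominant jump at $j^\ast$: the upper bound follows from Lemma \ref{upp}, but the lower bound requires $A_i^2+B_i^2>0$ for $a_i\in E$ together with the lower inequality in \eqref{dd}, and one must also carefully absorb the weak-singularity contributions using the combinatorial hypotheses that define $Z$.
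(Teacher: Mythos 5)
Your overall architecture is right: for power-like singularities the drift concentrates at a single visit to a strong singularity, the cumulative contribution of the other visits is of order one (so that the jump dominates), and the control afterwards follows by applying \eqref{basic} at the scale $q_m$ with $q_m\asymp\kappa i_0$. This matches the skeleton of the paper's argument. But there is a genuine gap in the construction of the dominating step, and a smaller imprecision in the switching mechanism.

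The paper does \emph{not} take $i_0$ to be the closest approach of the orbit to $E$; it explicitly picks $i_0\in[q_{s-4},\,q_{s-2}-2]$ with $x+i_0\alpha\in[a_k,a_k+\tfrac{1}{q_{s-4}}]$ (or the mirror interval, according to the sign of $\max(A_k,-B_k)$), which exists by the three-gap estimate since $q_{s-2}-q_{s-4}>q_{s-4}$. This choice is what simultaneously supplies (i) the upper bound $\|T^{i_0}x-a_k\|\le\tfrac{2}{q_{s-4}}\asymp\tfrac{c^4}{q_s}$, so that the lower bound in \eqref{dd} kicks in and the jump is of size $\ge \tfrac{A_kD_1^2}{2c}\cdot\tfrac{1}{\|x-y\|}$, and crucially (ii) the lower bound $i_0\ge q_{s-4}\ge q_{s'-4}\ge \kappa^{-1}N$, which is exactly what is used to deduce $M,L\ge N$ in Proposition~\ref{ratner.power}. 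Your $j^\ast$, defined as the closest approach and controlled only via \eqref{basic} and \eqref{nr_pos}, could in principle be as small as $j^\ast=1$: neither \eqref{basic} nor \eqref{nr_pos} forces the closest approach to happen after time $q_{s-4}$, so you cannot conclude $M\ge N$. You would need to add the paper's device of restricting the search to a window $[q_{s-4},q_{s-2}-2]$, or otherwise argue separately that an early return can be ruled out.

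Second, the appeal to ``exactly as in Lemma~\ref{swit}'' is an imported intuition from the logarithmic case that does not quite describe what happens here. For power-like singularities the paper shows that \emph{both} controls hold at once, namely $|f^{(n)}(T^{i_0+1}x)-f^{(n)}(T^{i_0+1}y)|<\epsilon$ for all $n\le\kappa(i_0+1)$ and $|f^{(-n)}(T^{i_0}x)-f^{(-n)}(T^{i_0}y)|<\epsilon$ for all $n\le\kappa(i_0+1)$. The selection between A.\ and B.\ in \eqref{choo} is then dictated by which of $e_{i_0}$, $e_{i_0+1}$ falls in $P$ (if $e_{i_0+1}\in P$, one shifts the basepoint to $T^{i_0+1}$ and uses forward control; if $e_{i_0}\in P$, one stays at $T^{i_0}$ and uses backward control). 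Your phrasing ``one of the two directions must avoid'' conflates this with the log-case switch where only one direction is safe; here the singular point $T^{i_0}x$ is flanked by two clean $q_m$-windows, and the direction is chosen by bookkeeping on which Birkhoff sum landed in $P$. The underlying computation is the one you indicate (Lemma~\ref{upp}, the truncated Denjoy–Koksma bound of Lemma~\ref{koksi}, the definitions of $\kappa$, $Z$ and the summability of $\sum q_s x_{i,s}$), and your treatment of the weak singularities is on target; but the logical link between ``$|e_{i_0}|\in P$ vs.\ $|e_{i_0+1}|\in P$'' and ``go backward from $T^{i_0}$ vs.\ forward from $T^{i_0+1}$'' needs to be made explicit, as it is the mechanism that makes the switchable property work here.
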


\begin{proof}
We will suppose \eqref{nr_pos} holds, the proof of the other case being analogous.

\begin{lemmata}  For $n=0,...,q_{s-2}-1$,   
$$|f^{(n)}(x)-f^{(n)}(y)| \leq 2k(3D_2+2).$$
\end{lemmata}
\begin{proof} 

By the choice of $x,y\in Z$ and (\ref{stoj}) we have for every $i=1,\ldots,k$, $a_i\notin [x+j\alpha,y+j\alpha]$ with $j\in\{0,\ldots,q_{s-2}-1\}$.
It follows that for $n=0,...,q_{s-2}-1$, $\left|f^{(n)}(x)-f^{(n)}(y)\right|=
\left|f'(\theta_n)\right|\|x-y\|$, for some $\theta_n\in[x,y]$.
Hence, using Lemma \ref{upp} and (\ref{gi}), for every $n=0,...,q_{s-2}$ we have
\begin{multline}\label{thie}
\left|f^{(n)}(x)-f^{(n)}(y)\right|\leq\\ H\|x-y\|\left(\sum_{i=1}^v(-g'^{(n)}_i(\theta_n-a_i)-g'^{(n)}_i(a_i-\theta_n))+
\sum_{i=v+1}^k(-h'^{(n)}(\theta_n-a_i)-h'^{(n)}(a_i-\theta_n))\right).
\end{multline}
Let $\phi$ stand for $g_i$, $i=1,...,v$ and $h$. By the monotonicity of $\phi'$ on $(0,1)$ we obtain $-\phi'^{(n)}(\theta_n-a_i)\leq -\phi'^{(n)}(x-a_i), -\phi'^{(n)}(a_i-\theta_n)\leq -\phi'^{(n)}(a_i-y)$. Since $-\phi'>0$, \begin{equation}\label{try}-\phi'^{(n)}(x-a_i)-\phi'^{(n)}(a_i-y)\leq -\phi'^{(q_{s-2})}(x-a_i)-\phi'^{(q_{s-2})}(a_i-y).\end{equation}
Using Lemma \ref{koksi} (applied to $x-a_i$, where $j_i\in [0,q_{s-2}]-1$ is unique such that $x+j_i\alpha\in [a_i,a_i+\frac{1}{2q_{s-2}}]$), we obtain
$$ \|x-y\|\left(-\phi'^{(q_{s-2})}(x-a_i)\right)\leq \|x-y\|\left(q_{s-2}\phi(\frac{1}{2q_{s-2}})-2\phi'(\frac{1}{2q_{s-2}})-\phi'(x+j_i\alpha)\right).$$
Consider $i\in E$. It follows that for $n=0,...,q_{s-2}-1$ we have $x+n\alpha,y+n\alpha\notin \bigcup_{i=v+1}^k[-\frac{1}{3Cq_s}+a_i,a_i+\frac{1}{3Cq_s}]$, because by (\ref{rtg}) and (\ref{stoj}), $\|x-y\|<\frac{1}{6Cq_s}$ and $x+n\alpha \notin \bigcup_{i=v+1}^k[-\frac{1}{2Cq_s}+a_i,a_i+\frac{1}{2Cq_s}]$. In this case, bo monotonicity of $h'$ , $-h'(x+j_i\alpha)<-h'(\frac{1}{2Cq_s})$ and therefore by (\ref{dd}) and (\ref{stoj})
\begin{multline}\label{ah} \|x-y\|\left(-h'^{(q_{s-2})}(x-a_i)\right)\leq \|x-y\|\left(q_{s-2}h(\frac{1}{2q_{s-2}})-2h'(\frac{1}{\frac{1}{2}\frac{1}{q_{s-2}}})-h'(\frac{1}{\frac{1}{2C}\frac{1}{q_s}})\right)\leq\\
\frac{q_{s-2}h(\frac{1}{2q_{s-2}})+2D_2q_{s-2}h(\frac{1}{2q_{s-2}})+D_2q_sh(\frac{1}{2q_s})}{q_sh(\frac{1}{2q_s})}\leq
\frac{3D_2q_sh(\frac{1}{2q_s})+q_{s-2}h(\frac{1}{2q_{s-2}})}{q_sh(\frac{1}{2q_s})}\leq 3D_2+1.
\end{multline}

Similarly (replacing $\frac{1}{2Cq_s}$ by $\frac{1}{3Cq_s}$); we obtain $\|x-y\|\left(-h'^{(q_{s-2})}(a_i-y)\right)<3D_2+1$.\\

For $i\in F$, by the fact that $x,y\in Z$, monotonicity of $g_i'$ and the choice of $s_0$, it follows that $-g_i'(x+i_j\alpha-a_i)\leq -g_i'(x_{i,s})\leq g_i'(x_s)\leq \epsilon h'(\frac{1}{2q_s})$. Therefore, using (\ref{img}), we get
\begin{multline} \label{sgh}\|x-y\|\left(-g_i'^{(q_{s-2})}(x-a_i)\right)\leq \|x-y\|\left(q_{s-2}g_i(\frac{1}{2q_{s-2}})-2g_i'(\frac{1}{\frac{1}{2}\frac{1}{q_{s-2}}})-\epsilon h'(\frac{1}{\frac{1}{2C}\frac{1}{q_s}})\right)\leq\\
\|x-y\| \left(\epsilon q_{s-2}h(\frac{1}{2q_{s-2}})-\epsilon h'(\frac{1}{\frac{1}{2}\frac{1}{q_{s-2}}})-\epsilon h'(\frac{1}{\frac{1}{2C}\frac{1}{q_s}})\right)\leq \epsilon(3D_2+1),
\end{multline}
in the last inequality we use the last estimation in (\ref{ah}). Similarly we prove $\|x-y\|\left(-g_i'^{(q_{s-2})}(a_i-y)\right)<\epsilon(3D_2+1)$. Therefore using \eqref{thie} and the computations above, for $n=0,...,q_{s-2}-1$,   $$|f^{(n)}(x)-f^{(n)}(y)|<H(\epsilon 2v(3D_2+1)+2(k-v)(3D_2+1))\leq 2k(3D_2+2),$$ by the choice of $\epsilon$.
\end{proof}

\begin{lemmata}
 There exists  $i_0 \in \{0,...,q_{s-2}-1\}$, such that 
$|f^{(i_0)}(x)-f^{(i_0)}(y)| \geq \frac{A_kD_1^2}{4c}$.
\end{lemmata}
\begin{proof} Since $q_{s-2}-q_{s-4}>q_{s-4}+1$, there exists  $i_0\in [q_{s-4},q_{s-2}-2]$ such that $x+i_0\alpha\in [a_{k},a_k+\frac{1}{q_{s-4}}]$. We have assumed that $A_k^2+B_k^2>0$. Suppose additionally $A_k\geq-B_k$ (if $A_k\leq -B_k$ then we replace $[a_{k},a_k+\frac{1}{q_{s-4}}]$ by $[-\frac{1}{q_{s-4}}+a_k,a_k]$). We claim that
$$|(f^{(i_0+1)}(x)-f^{(i_0+1)}(y))-(f^{(i_01)}(x)-f^{(i_0)}(y))|>\frac{A_kD_1^2}{2c}.$$
Indeed, the LHS of this inequality is  equal to $|f(x+i_0\alpha)-f(y+i_0\alpha)|=|f'(\theta_{i_0})|\|x-y\|$, for some $\theta_{i_0}\in [x+i_0\alpha,y+i_0\alpha]$. Now, by (\ref{stoj}), $\theta_{i_0}\in [a_k,a_k+\frac{1}{q_{s-4}}+\frac{1}{q_sh(\frac{1}{2q_s})}]\subset [a_k,a_k+\frac{2}{q_{s-4}}]$. By (\ref{jnm}), monotonicity of $h'$, (\ref{dd}) twice (for $s$ and $s+1$) and (\ref{stoj})
\begin{multline}|f'(\theta_{i_0})|\geq \frac{A_k}{2} |h'(\theta_{i_0}-a_k)|\geq \frac{A_k}{2} |h'(\frac{2}{q_{s-4}})|\geq \frac{A_k}{2} |h'(2c^4\frac{1}{q_s})|\geq\\
 \frac{A_k}{2}D_1q_sh(\frac{1}{2q_s})\geq \frac{A_k}{2}D_1^2 \frac{q_{s+1}}{c}h(\frac{1}{2q_{s+1}})\geq \frac{A_kD_1^2}{2c}\frac{1}{\|x-y\|};\end{multline}
and the claim follows. Therefore, one of the numbers $|f^{(i_0+1)}(x)-f^{(i_0+1)}(y)|$ or $|f^{(i_0)}(x)-f^{(i_0)}(y)|$ is at least $\frac{A_kD_1^2}{4c}$.
\end{proof}
As a consequence of the above lemmas, we obtain that at least one of the numbers 
$f^{(i_0+1)}(x)-f^{(i_0+1)}(y)$
$f^{(i_0)}(x)-f^{(i_0)}(y)$ belongs to the set $P$, and \eqref{epso} is proved. The next result is the proof of \eqref{choo}.
\begin{lemmata} The following hold:
\begin{equation}\label{alt}
|f^{(n)}(T^{i_0+1}x)-f^{(n)}(T^{i_0+1}y)|<\epsilon \;\;\text{for all}\;0\leq n\leq \kappa(i_0+1),\end{equation}
\begin{equation}\label{alt2}|f^{(-n)}(T^{i_0}x)-f^{(-n)}(T^{i_0}y)|<\epsilon\;\; \text{for all}\; 0\leq n\leq \kappa(i_0+1).
\end{equation}
\end{lemmata}

\begin{proof}
First we show \eqref{alt}. Select (a unique) $m\in \N$ such that $q_m\geq \kappa(i_0+1)\geq q_{m-1}$. By \eqref{basic} applied to $T^{i_0}(x)$, by the choice of $i_0$ it follows that
$$\{T^{i_0}x,...,T^{i_0}x+(q_m-1)\alpha\}\cap \bigcup_{i=v+1}^k[-\frac{1}{2Cq_m}+a_i,a_i+\frac{1}{2Cq_m}]=\{T^{i_0}x\}.$$
Therefore, using the same arguments which lead (\ref{thie}) we obtain (cf. (\ref{try})) for $n=0,...,\kappa(i_0+1)$
\begin{multline}\label{try2}\left|f^{(n)}(T^{i_0}x)-f^{(n)}(T^{i_0}y)\right|\leq H\|x-y\|
(\sum_{i=1}^v(-g'^{(q_m)}_i(T^{i_0+1}x-a_i)-g'^{(q_m)}_i(a_i-T^{i_0+1}y))+\\
\sum_{i=v+1}^k-h'^{(q_m)}(T^{i_0+1}x-a_i)-h'^{(q_m)}(a_i-T^{i_0+1}y)).\end{multline}
Then for $i\in E$, again by repeating that lead to (\ref{ah}) we obtain
$$\|x-y\|\left(-h'^{(q_{m})}(T^{i_0+1}x-a_i)\right)\leq
\frac{q_{m}h(\frac{1}{2q_{m}})+3D_2q_{m}h(\frac{1}{2q_{m}})}{q_sh(\frac{1}{2q_s})}\leq
\frac{(3D_2+1)q_mh(\frac{1}{2q_m})}{q_sh(\frac{1}{2q_s})}.$$
But $q_m\leq c\kappa(i_0+1)<c\kappa q_{s-2}$, thus (by the monotonicity of $h$)
$\|x-y\|\left(-h'^{(q_{m})}(T^{i_0+1}x-a_i)\right)\leq (3D_2+1)\frac{c\kappa q_{s-2}}{q_s}=\frac{\epsilon}{4Hk}$, by the definition of $\kappa$. Similarly (replacing $\frac{1}{2Cq_m}$ by $\frac{1}{3Cq_m}$), we obtain $\|x-y\|\left(-h'^{(q_{m})}(a_i-T^{i_0+1}y)\right)<\frac{\epsilon}{2Hk}$. \\
If $i\in F$, then using monotonicity of $g'$, the choice of $i_0$ and $m$, the fact that $x,y\in Z$ and (\ref{stoj}), we get $-g'^{(q_m)}_i(T^{i_0+1}x-a_i)\leq -g'^{(q_{s-1})}_i(x-a_i)$ and $-g'^{(q_m)}_i(a_i-T^{i_0+1}y)\leq -g'^{(q_{s-1})}_i(a_i-y)$. We proceed, repeating what lead to (\ref{sgh}) (with $q_{s-1}$ instead of $q_{s-2}$) and using (\ref{stoj}) and (\ref{img})
$$\|x-y\|\left(-g'^{(q_{s-1})}_i(x-a_i)\right)\leq \frac{q_{s-1}g'_i(\frac{1}{2q_{s-1}})-g_i'(\frac{1}{2q_{s-1}})-g_i'(x_s)}{q_{s}h(\frac{1}{2q_s})}\leq
\frac{\epsilon}{4kH}.$$
Similarly $\|x-y\|\left(-g'^{(q_{s-1})}_i(a_i-x)\right)<\frac{\epsilon}{4kH}$.
Using this and (\ref{try2}) we get\\ $|f^{(n)}(T^{i_0+1}x)-f^{(n)}(T^{i_0+1}y)|<\epsilon$, which yields the first case of (\ref{alt}). To handle the second case, notice that
$$\{T^{i_0}x-(q_m-1)\alpha,...,T^{i_0}x\}\cap \bigcup_{i=v+1}^k[-\frac{1}{2Cq_m}+a_i,a_i+\frac{1}{2Cq_m}]=\{T^{i_0}x\}.$$
We now proceed as before to obtain first $|f^{(-n)}(T^{i_0}x)-f^{(-n)}(T^{i_0}y)|=\|x-y\|\left|f'^{(n)}(\theta_n)\right|$
with $\theta_n\in [T^{i_0}x-(n-i_0)\alpha,T^{i_0}y-(n-i_0)\alpha]$ and then estimating above by
\begin{multline}H\|x-y\|
(\sum_{i=1}^v(-g'^{(q_m)}_i(T^{i_0}x-(q_m-1)\alpha-a_i)-g'^{(q_m)}_i(a_i-(T^{i_0}y-(q_m-1)\alpha)))+\\
\sum_{i=v+1}^k-h'^{(q_m)}(T^{i_0}x-(q_m-1)\alpha-a_i)-h'^{(q_m)}(a_i-(T^{i_0}y-(q_m-1)\alpha))).\end{multline}
We conclude exactly in the same way as in the first case.
\end{proof}
We proceed to the proof of Lemma \ref{lemme.drift} in the case \eqref{nr_pos} is satisfied. If $f^{(i_0+1)}(x)-f^{(i_0+1)}(y)\in P$, then \eqref{alt} gives A. in \eqref{choo}; if  $f^{(i_0)}(x)-f^{(i_0)}(y)\in P$, then \eqref{alt2} gives B. in \eqref{choo}.   The proof of Lemma \ref{lemme.drift} is thus completed since the case  where \eqref{nr_neg} is satisfied is analogous. 

 This finishes the proof of Proposition \ref{prop.drift.power}, thus of  Theorem \ref{boun}.  \end{proof}

\section{Absence of weak Ratner's property}\label{abs}
In this section, we will prove Theorem \ref{aby}. Let $f$ be as in Theorem \ref{aby}; for simplicity we assume that $\int_{\T}f=1$. Let $c>1$ be such that for every $s\in \N$, $q_{s+1}\leq c q_s$. Recall that $C>1$ is a constant from Definition \ref{gen.pos} (such a constant exists, since $k=1$ in our case);  
we may assume that $C>c$.

Fix any compact $P\subset \R\setminus\{0\}$. We will prove that for any $t_0\in \R$, $(T_t^f)_{t\in\R}$ does not have $R(t_0,P)$ property. For simplicity of the notations we will assume that $t_0=1$. Let 
\begin{equation}\label{d}  
 d>c^{1-\gamma} \text{ be such that }P\subset\left[-\frac{|\gamma|d}{4},-\frac{100c}{d}\right]\cup\left[\frac{100c}{d},\frac{|\gamma|d}{4}\right].
\end{equation}
Let $\epsilon,\kappa>0$ sufficiently small, smallness  that will be determined in the course of the proof. We use Lemma \ref{erg} for $Tx=x+\alpha$, to $\epsilon, 3\kappa^2$ to get a set $A\subset \T$, $\lambda(A)>1-\epsilon$ and $N_0\in\N$, such that (\ref{mne}) holds for $x\in A$ and $n\geq N_0$. Let $N>\max\left(2N_0,\frac{1}{\epsilon^2\kappa^2}\right)$.

We will hereafter assume  that $(T_t^f)_{t\in\R}$ has the $R(t_0,P)$ property (see Definition \ref{wrs}) and obtain a contradiction. Thus, assume there exist a set $Z\subset X^f$ with $\lambda^f(Z)>1-\epsilon$ and $0<\delta<\epsilon$ such that for every $(x,s),(y,s')\in Z$ with $d^f((x,s),(y,s'))<\delta$, there exist $M,L\geq N$ with $\frac{L}{M}\geq \kappa$ and $p\in P$ such that
\begin{equation}\label{smi}\frac{1}{L}\left|\{n\in [M,M+L]\;:\; d^f(T^f_n(x,s),T^f_{n+p}(y,s'))<\epsilon\}\right|>1-\epsilon.\end{equation}
Consider 
\begin{equation}\label{ohn}V:=\{(x,s)\in Z\;:\; x\in A,\;0\leq s<\frac{1}{\epsilon^2}\}.\end{equation} 
It follows that $\lambda^f(V)>1-4\epsilon$.

The contradiction will come from the following two Propositions, the first one of which is a consequence of \eqref{smi} and \eqref{ohn}.

\begin{proposition} \label{fan2}  Let $(x,s),(y,s')\in V$ with $d^f((x,s),(y,s'))<\delta$.
Then there exists an interval  $I=[M',M'+L']$ such that $M'\geq \frac{N}{2}$, $\frac{L'}{M'}\geq \frac{a\kappa}{6}$ ($a=a(t_0)>0$ is a constant obtained in Lemma \ref{nwc}), there exists $p\in P$ and there exists $m\in \Z$ such that 

\begin{multline}\label{fan}
\|x-y-m\alpha\|<\epsilon\text{ and for every }n\in [M',M'+L'],\;|f^{(n)}(x)-f^{(n+m)}(y)-p|<2\epsilon. 
\end{multline}

\end{proposition}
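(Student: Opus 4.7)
The plan is to translate the hypothetical $R(1,P)$-property into a Birkhoff-sum statement and then extract a suitable sub-interval.  Applying $R(1,P)$ to the pair $(x,s),(y,s')\in V$ with $d^f((x,s),(y,s'))<\delta$ produces integers $M,L\geq N$ with $L/M\geq\kappa$, a real $\tilde p\in P$, and a subset $G\subset[M,M+L]\cap\Z$ of relative density at least $1-\epsilon$ such that $d^f(T^f_n(x,s),T^f_{n+\tilde p}(y,s'))<\epsilon$ for every $n\in G$.

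Next I would unfold the special flow.  For every integer $n$, write $T^f_n(x,s)=(T^{k_1(n)}x,s+n-f^{(k_1(n))}(x))$ and $T^f_{n+\tilde p}(y,s')=(T^{k_2(n)}y,s'+n+\tilde p-f^{(k_2(n))}(y))$, where $k_i(n)\in\Z$ is defined by \eqref{D-C}.  Putting $m(n):=k_2(n)-k_1(n)$ and reading off the two components of $d^f$, closeness at $n\in G$ gives both $\|x-y-m(n)\alpha\|<\epsilon$ and
$$|(s-s')-\tilde p+f^{(k_1(n)+m(n))}(y)-f^{(k_1(n))}(x)|<\epsilon.$$
Since $|s-s'|\leq d^f((x,s),(y,s'))<\delta<\epsilon$, the second inequality yields $|f^{(k_1(n))}(x)-f^{(k_1(n)+m(n))}(y)+\tilde p|<2\epsilon$ for every $n\in G$.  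Setting $p:=-\tilde p$ and, without loss of generality, replacing $P$ by $P\cup(-P)$ (which preserves \eqref{d}), we have $p\in P$.

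I then convert flow time to Birkhoff index.  Lemma~\ref{erg} applied to $x,y\in A$, together with $s,s'<1/\epsilon^2$, gives $k_i(n)/n\in[1-\kappa,1+\kappa]$ for all $n\geq N_0$, so $n\mapsto k_1(n)$ is a monotone surjection of $[M,M+L]$ onto an interval of Birkhoff indices of length comparable to $L$, on which the image of $G$ still has density $1-O(\epsilon)$.  Invoking Lemma~\ref{nwc} fixes the conversion constant $a=a(t_0)>0$ and produces an initial candidate interval $[M'_0,M'_0+L'_0]$ with $M'_0\geq N/2$ and $L'_0/M'_0\geq a\kappa/3$.  The fluctuations of $m(n)$ across this range are $O(\kappa L'_0)$; because $\alpha$ is of bounded type, any two admissible values of $m$ (i.e.\ with $\|x-y-m\alpha\|<\epsilon$) are separated by at least a constant multiple of $1/\epsilon$, so only $O(1+\epsilon\kappa L'_0)$ admissible values can occur.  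A pigeonhole refinement then produces a sub-interval $[M',M'+L']\subset[M'_0,M'_0+L'_0]$ with $L'/M'\geq a\kappa/6$ and some $m^{*}\in\Z$ for which $m(n)=m^{*}$ on every good $n\in G\cap[M',M'+L']$.

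The main obstacle is to propagate the Birkhoff-sum estimate from the dense set $G\cap[M',M'+L']$ to \emph{every} integer $n$ in $[M',M'+L']$.  For this I would consider the function $g(n):=f^{(n)}(x)-f^{(n+m^{*})}(y)$, whose increments $f(T^nx)-f(T^{n+m^{*}}y)$ are small away from the singularity thanks to $\|x-y-m^{*}\alpha\|<\epsilon$; the delicate part is to control the contribution of the rare indices for which $T^nx$ or $T^{n+m^{*}}y$ visits a small neighbourhood of $0$, using a Denjoy--Koksma type estimate tuned to the power-like singularity of $f(x)=x^\gamma+r$ together with the fact that $V$ restricts the fibre and hence, indirectly, the frequency of deep singular visits.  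Once these increments are absorbed within the $O(\epsilon)$ tolerance, the bound $|g(n)-p|<2\epsilon$ on the dense good set extends to all $n\in[M',M'+L']$, yielding the proposition.
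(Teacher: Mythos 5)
Your first few steps line up with the paper's Lemma~\ref{nwc}: translating $d^f(T^f_n(x,s),T^f_{n+p}(y,s'))<\epsilon$ into the Birkhoff--sum inequality $|f^{(r)}(x)-f^{(r+m_r)}(y)-p|<2\epsilon$ on a set of density $\geq a$ in an interval $[M_0,M_0+L_0]$, and the observation that admissible shifts $m$ are spaced by order $1/\epsilon$ for bounded type $\alpha$ also appears there. But the proposal has a serious gap at exactly the point you flag as ``the main obstacle.''

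The conclusion of the proposition is not that the bound $|f^{(n)}(x)-f^{(n+m)}(y)-p|<2\epsilon$ can be extended from a dense subset of a fixed candidate interval to the whole of that interval. Such an extension is false in general for these flows: a visit of the orbit to a small neighbourhood of the singularity can produce an abrupt jump in $f^{(n)}(x)-f^{(n+m)}(y)$ in the interior of $[M'_0,M'_0+L'_0]$, so no Denjoy--Koksma averaging can absorb it within $O(\epsilon)$. What the proposition actually asserts is that among the (possibly many) maximal sub-intervals $I_1,\dots,I_l$ of the good set on which the shift $m$ is constant and the estimate holds at every index, at least one has length $\geq \frac{a}{3}L_0$. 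The paper's proof of this is Lemmata~\ref{beet}: between consecutive good intervals $I_j,I_{j+1}$ there is a ``moderate drift'' interval $J_j$ where $2\epsilon \leq |f^{(r)}(x)-f^{(r+m_j)}(y)-p| < 4C^3$, and crucially $|J_j|\geq \frac{|I_j|}{4C^3\epsilon^{1+\gamma}}$; since the $J_j$ are essentially disjoint and live in $[M_0,M_0+L_0]$, the middle good intervals $I_2,\dots,I_{l-1}$ have total length $<\frac{a}{3}L_0$, forcing $I_1$ or $I_l$ to be long. This combinatorial/length-counting step is the heart of the argument, uses Lemma~\ref{len} (the ``how fast must the drift escape'' estimate) together with the badly-approximable-singularity hypothesis, and has no counterpart in your proposal.

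A secondary issue: your pigeonhole to a constant $m^{*}$ with $L'/M'\geq a\kappa/6$ is asserted but not justified, and the $O(1+\epsilon\kappa L'_0)$ count of admissible $m$'s does not obviously yield a \emph{contiguous} sub-interval of the required length carrying a single $m^{*}$; the paper instead simply takes the $I_i$'s to be the maximal runs with constant $m_i$ and shows one of them is long via the $J_i$ argument above. So the approach, as written, is not a different valid route---it is missing the key lemma that makes the statement true.
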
 
\begin{remark}{\em \hspace{0.8cm} For $p\in P$, $n\in \N$, two points $(x,s),(y,s')\in V$ are called {\em $p,n$-close} if $d^f(T^f_n(x,s),T^f_{n+p}(y,s'))<\epsilon$. Then $(x,s),(y,s')$ have the WR-property (see \eqref{smi}), if there exists a time interval $[M,M+L]$, such that they are $p,n$-close for a proportion $1-\epsilon$ of n's in $[M,M+L]$. In general, the set on which the points are $p,n$-close, can be any subset of $[M,M+L]$. Proposition \ref{fan2} says that in our context the property actually holds on a full interval of integers. This is what happens also in the original case of horocycle flows, where, once the point are drifted after time $R$, they stay drifted for time $\eps R$.
}
\end{remark}

\begin{proposition} \label{fini} There exists a set $W_0\subset \T$ such that $\lambda(W_0)>c_0(d)$ ($c_0=c_0(d)>0$ being a constant depending only on $d$), and a number $0<\delta_0<\delta$ such that for every $x\in W_0$
\begin{multline}\label{naw} \text{for every}\; M\geq \frac{N}{2},\;\text{every }k\in \Z\;\text{ such that }\|x-(x+\delta_0)-k\alpha\|<\epsilon\text{ and every }p\in P,\\
\text{ if } I=[M,M+T]\text{ is such that for every }n\in I,\; |f^{(n)}(x)-f^{(n+k)}(x+\delta_0)-p|<2\epsilon \\ \text{ then }\frac{T}{M}<\frac{a\kappa}{10}.
\end{multline}
\end{proposition}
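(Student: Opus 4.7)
The plan is to reformulate the drift condition as the $R_\alpha$-orbit of $x$ avoiding a specific ``big jump'' set, and then invoke the Denjoy-Koksma inequality for bounded type $\alpha$ to force a visit in any sufficiently long window.

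First, I will set $u:=\delta_0+k\alpha\in(-\epsilon,\epsilon)$ and use the cocycle identity $f^{(n+k)}(x+\delta_0)=f^{(k)}(x+\delta_0)+f^{(n)}(x+u)$ to rewrite the drift inequality $|f^{(n)}(x)-f^{(n+k)}(x+\delta_0)-p|<2\epsilon$ on $I=[M,M+T]$ as the assertion that the partial sums $S_n:=\sum_{j=M}^{n-1}g_u(T^j x)$, where $g_u(z):=f(z)-f(z+u)$, stay in the window $(-4\epsilon,4\epsilon)$ throughout $n\in I$. In particular each single-term increment satisfies $|g_u(T^n x)|<4\epsilon$, meaning $T^n x$ must avoid the set $B_u:=\{z\in\T:|g_u(z)|\geq 4\epsilon\}$ for every $n\in[M,M+T-1]$.

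Second, I will analyse $B_u$. Writing $g_u(z)=z^\gamma-(z+u)^\gamma$ in a neighbourhood of the singularity $0$ and splitting into the regimes $|z|\ll|u|$, $|z|\sim|u|$ and $|z|\gg|u|$, one checks that $B_u$ contains a neighbourhood of $0$ and of $-u$ whose total Lebesgue measure is bounded below by a constant $c_0(d)>0$ depending only on $d$, provided $|u|$ is at least $4\epsilon/(|\gamma|d)$. The calibration $d>c^{1-\gamma}$ and the lower bound $|p|\geq 100c/d$ built into the definition of $P$ are exactly what make this threshold on $|u|$ compatible with the drift condition: in the complementary small-$u$ regime, $|k|$ must be correspondingly large, Lemma \ref{erg} gives $|f^{(k)}(x+\delta_0)-k|<\kappa|k|/3$ on a set of $x$ of measure $>1-\epsilon/2$, and the constant $p\in P$ that the drift would force is then far outside the bound $|p|\leq|\gamma|d/4$ prescribed by $P$.

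Third, in the large-$u$ regime, I will apply the Denjoy-Koksma inequality for bounded type $\alpha$ to the characteristic function of each interval component of $B_u$. This yields that every $R_\alpha$-orbit visits $B_u$ within any window of length $T\geq 3/\lambda(B_u)\geq 3/c_0(d)$. Combining with the lower bound $T\geq a\kappa M/10\geq a\kappa N/20$ and the choice $N>1/(\epsilon^2\kappa^2)$, the required threshold is comfortably crossed for small $\epsilon,\kappa$, so there exists some $n\in[M,M+T-1]$ with $|g_u(T^n x)|\geq 4\epsilon$, contradicting the reformulated drift. This contradiction forces $T/M<a\kappa/10$.

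The set $W_0$ can then be taken as the generic set from Lemma \ref{erg} intersected with the complement of a small bad neighbourhood of $0$ adapted to $\delta_0$, whose Lebesgue measure is bounded below by $c_0(d)$ inherited from the measure bound on $B_u$. The main obstacle will be uniformity across the infinite family of admissible $k$'s: each $k$ with $\|\delta_0+k\alpha\|<\epsilon$ gives rise to a different $u$, and the dichotomy of the second step must be calibrated carefully so that the final constant $c_0(d)$ truly depends only on $d$ and not on $\epsilon$ or $\kappa$.
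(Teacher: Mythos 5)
Your reformulation of the drift condition via the cocycle identity and the observation that the condition forces $T^n x$ to avoid $B_u=\{z:|g_u(z)|\geq 4\eps\}$ is a sound starting point, but the argument breaks down at the point where the whole difficulty of the Proposition is concentrated, namely the case $k=0$. There $u=\delta_0$, and $\delta_0$ (here and in the paper) must be allowed to be much smaller than $\eps$ — in the paper it is of order $q_w^{-(1-\gamma)}$ for an arbitrarily large $w$ chosen after $\eps, N$ are fixed. For such $u$ the set $B_u$ has measure of order $(\,\delta_0/\eps\,)^{1/(1-\gamma)}$, hence of order $\eps^{-1/(1-\gamma)} q_w^{-1}$, which tends to $0$. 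The Denjoy--Koksma step then only yields $T\lesssim q_w$, and since $M$ can be as small as $N/2$ with $N$ fixed \emph{before} $w$, the estimate $T/M<a\kappa/10$ cannot be extracted from this bound. Your proposed dichotomy ``$|u|$ large $\Rightarrow$ Denjoy--Koksma'' vs.\ ``$|u|$ small $\Rightarrow$ $|k|$ large'' simply does not cover $k=0$, which sits in the small-$u$ regime with $|k|=0$.

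What the paper actually does in that regime is not a density-of-visits argument at all: it \emph{engineers} $\delta_0$ and $W_0$ so that the drift $\sigma_n=f^{(n)}(x)-f^{(n)}(x+\delta_0)$ makes one decisive jump that overshoots the whole interval $P$. That is the content of \eqref{moz} and \eqref{sho}: for $x\in W_0$ the orbit first hits $\bigl[\tfrac{1}{2dq_w},\tfrac{1}{dq_w}\bigr]$ at a well-controlled time $i_0<q_{w-l}$, below which $0<\sigma_n<\tfrac{100c}{d}$, and the single step across the singularity sends $\sigma_n$ above $\tfrac{|\gamma|d}{2}$, so $\sigma_n$ never lands in $P=\bigl[\tfrac{100c}{d},\tfrac{|\gamma|d}{4}\bigr]\cup(-P)$ while $n<wq_w$; for $M>\tfrac12 wq_w$ Lemma~\ref{len} gives $R<q_w$ and hence $R/M<2/w$. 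This jump mechanism is why $\lambda(W_0)$ is only of order $c_0(d)$ (a genuine constraint, not ``a generic set minus a small neighbourhood of $0$'', which would have measure close to $1$). A second independent gap concerns $k\neq 0$ with small $u$: you assert that $|k|$ must be large, but this is false for a generic $\delta_0$; it requires the Diophantine property $\|\delta_0-k\alpha\|\geq |k|^{-(1+\zeta)}$ built into $\delta_0$ by Lemmata~\ref{delta}, which your proposal never constructs. And finally, even granting $|k|$ large, what this forces is not ``$p$ outside $P$'' but rather a lower bound $M\gtrsim |k|$ (via $|f^{(M)}(x)-f^{(M)}(y)|<2\kappa M$ on the generic set $A$ against $|f^{(k)}(T^M y)|\gtrsim |k|$), which is then combined with the length bound from Lemma~\ref{len}.
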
 

\begin{remark}\label{expl} {\em 
The points $x \in W_0$ go too close to the singularity under iteration by $R_\a$, so that points of the form $(x,s), (x+\delta_0,s)$  split far apart  before they get separated by a  distance in $P$ (Lemma \ref{lm2} below). In other words, these points do not have the 'natural' WR-property that consists of a controlled drift starting from the first time the points split. To make sure these points cannot display the WR-Property in the future  $\delta_0$ is chosen in such a way, that if for large $M$ $T^f_M(x,s),T^f_M(x+\delta_0,s)$ become close, then nevertheless $d^f(T^f_M(x,s),T^f_M(x+\delta_0,s))\gg \frac{1}{M^{1-\gamma}}$, and Lemma \ref{len} then precludes the WR-property (see Lemma \ref{lm1} below). 
  }
\end{remark}

Before we prove these propositions we will see how they imply Theorem \ref{aby}.

\begin{proof} [Proof of Theorem \ref{aby}] Take $x \in W_0$ and $s>0$ such that $(x,s), (x+\delta_0,s) \in V \times V$, which is possible since the measure of $V$ is arbitrarily close to $1$ if $\eps$ is sufficiently small.  By Propostion \ref{fini},  $(x,s), (x+\delta_0,s)$ satisfy \eqref{naw}, hence they don't satisfy \eqref{fan}, a contradiction.
\end{proof}

\subsection{Proof of Proposition \ref{fan2}}

\begin{lemma}\label{len} 


Let $x,y\in \T$ and let $I$ be an integer interval such that for every $n\in I$, $|f^{(n)}(x)-f^{(n)}(y)|<\eta$ (where $\eta$ is a sufficiently small number). Then $|I|<2c\eta^{1+\gamma} \|x-y\|^{\frac{-1}{1-\gamma}}$.
 \end{lemma}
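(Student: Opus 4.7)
The plan is to convert the hypothesis $|f^{(n)}(x)-f^{(n)}(y)|<\eta$ into pointwise information that the rotation orbit $\{T^n x\}_{n\in I}$ stays away from the one-sided singularity of $f$ at $0\in\T$, and then to invoke the three-distance theorem under the bounded partial quotients assumption on $\alpha$ to bound the length $|I|$.

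First, the cocycle identity applied at consecutive indices in $I$ yields $|f(T^n x)-f(T^n y)|<2\eta$ for every $n\in[M,M+L-1]$. For $\eta$ smaller than a constant depending only on $\gamma$ and $r$, the arc $T^n[x,y]\subset\T$ cannot straddle the singularity $0$: otherwise one of $f(T^n x),f(T^n y)$ would remain bounded near $1+r$, while the other would be at least $\|x-y\|^\gamma\geq 2^{-\gamma}$, creating a jump larger than the universal constant $2^{-\gamma}-1>0$, which would exceed $2\eta$. Thus $T^n[x,y]\subset(0,1)$, and the mean value theorem together with the monotonicity of $|f'(t)|=|\gamma|\,t^{\gamma-1}$ on $(0,1)$ gives
\[
2\eta\;>\;\|x-y\|\cdot|f'(\xi_n)|\;\geq\;|\gamma|\,\|x-y\|\,(T^n x+\|x-y\|)^{\gamma-1},
\]
for some $\xi_n\in T^n[x,y]$. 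Rearranging yields $T^n x+\|x-y\|\geq D:=(|\gamma|\,\|x-y\|/(2\eta))^{1/(1-\gamma)}$, and for $\eta$ small enough (so that in particular $\|x-y\|\leq D/2$) we obtain $T^n x\geq D/2$ for every $n\in[M,M+L-1]$. Consequently the orbit segment of length $L$ avoids the arc $(0,D/2)\subset\T$.

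Since $\alpha$ has bounded partial quotients with $q_{s+1}\leq cq_s$, the three-distance theorem implies that every orbit segment of length $L\geq q_s$ has maximal gap at most $\|q_{s-1}\alpha\|\leq 1/q_s\leq c/L$, hence visits every arc of length exceeding $c/L$. Applied to our segment, which avoids $(0,D/2)$, this forces $D/2\leq c/L$, i.e.,
\[
L\;\leq\;\frac{2c}{D}\;=\;2c\left(\frac{2\eta}{|\gamma|\,\|x-y\|}\right)^{1/(1-\gamma)}.
\]
Using the elementary inequality $1/(1-\gamma)>1+\gamma$ (equivalent to $1-\gamma^2<1$) and $\eta<1$, we have $\eta^{1/(1-\gamma)}\leq\eta^{1+\gamma}$, and for $\eta$ sufficiently small the leftover factor $(2/|\gamma|)^{1/(1-\gamma)}\eta^{\gamma^2/(1-\gamma)}$ can be absorbed into the constant (the exponent $\gamma^2/(1-\gamma)$ being positive, the factor tends to $0$ with $\eta$), yielding the desired bound $|I|<2c\eta^{1+\gamma}\|x-y\|^{-1/(1-\gamma)}$. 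The main obstacle is the careful ruling out of the straddle case, which crucially exploits the one-sided nature of the singularity of $f$ at $0$; lengths $L$ too short to contain a full period $q_s$ are handled trivially since the right-hand side of the asserted bound is then bounded below by a positive constant.
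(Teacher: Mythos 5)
Your proof is correct and rests on the same two ingredients as the paper's proof — the polynomial singularity of $f$ at $0$, and equidistribution of bounded-type rotation orbits via the three-gap theorem — but packages them in the contrapositive. You deduce from the small-drift hypothesis, via the mean value theorem, that the orbit must avoid the neighborhood $(0,D/2)$ of the singularity for a quantified $D$, and then use the three-gap theorem to cap how long such avoidance can last. The paper proceeds in the forward direction: given the scale $s$ of $\|x-y\|$ from \eqref{xyk}, it selects $k$ with $q_{k+1}\approx 2\eta^{1+\gamma}q_{s+1}/c$, exhibits a time $n_1<q_{k+1}$ at which $T^ax+n_1\alpha$ lies within $O(1/q_{k+1})$ of $0$, and checks that the one-step jump there already exceeds $4\eta$, whence $|I|\leq q_{k+1}$. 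These are dual formulations of the same argument and yield the same exponent bookkeeping: in both, the slack $\eta^{\gamma^2/(1-\gamma)}$ produced by $1/(1-\gamma)>1+\gamma$ (i.e.\ $\gamma^2>0$) is what absorbs the leftover constants for $\eta$ small. One minor imprecision: the maximal gap for an orbit segment of length $L\in[q_s,q_{s+1})$ is $\|q_{s-1}\alpha\|+\|q_s\alpha\|\leq 2/q_s$, so your three-gap bound should read $2c/L$ rather than $c/L$; the extra factor of $2$ is of course harmless and absorbed along with the others.
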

\begin{proof} We assume that $x<y$. Let $s\in \N$ be unique such that
\begin{equation}\label{xyk}\frac{1}{q^{1-\gamma}_{s+1}}\leq\|x-y\|<\frac{1}{q_s^{1-\gamma}}.
\end{equation}
 Denote $I=[a,b]\cap \Z$ with $a,b\in \Z$. Then, by the cocycle identity, the fact that $a\in I$, for $n\in \Z$, we have
\begin{equation}\label{on}|f^{(n)}(x)-f^{(n)}(y)|\geq |f^{(n-a)}(T^ax)-f^{(n-a)}(T^ay)|-\eta.\end{equation}
Let $k\in \N$ be unique such that 
\begin{equation}\label{two}q_{k+1}\geq \frac{2\eta^{1+\gamma}q_{s+1}}{c}>q_k.\end{equation} 
We will show that there exists $n_0\in [0,q_{k+1}]$ such that \begin{equation}\label{thr}|f^{((n_0+a)-a)}(T^ax)-f^{((n_0+a)-a)}(T^ay)|=|f^{(n_0)}(T^ax)-f^{(n_0)}(T^ay)|>2\eta.
\end{equation} 
This, by (\ref{on}), gives $|f^{(n_0+a)}(x)-f^{(n_0+a)}(y)|>\eta$ and therefore $n_0+a\notin I$. It follows that $|I|\leq q_{k+1}\leq cq_k<2\eta^{1+\gamma}q_{s+1}\stackrel{\eqref{xyk}}{\leq} 2c\eta^{1+\gamma}\|x-y\|^{\frac{-1}{1-\gamma}}$ which completes the proof. Now, we show (\ref{thr}). By (\ref{two}) and $\eta$ sufficiently small, we have $s\geq k$.\\
Note that there exist $n_1\in [0,q_{k+1})$ such that $T^ax+n_1\alpha\in[0,\frac{1}{q_{k+1}}]$. By (\ref{xyk}) and the fact that $k+1\leq s+1$, we obtain $T^ay+n_1\alpha\in [0,\frac{2}{q_{k+1}}]$. Therefore  
\begin{multline}\left|(f^{(n_1+1)}(T^ax)-f^{(n_1+1)}(T^ay))-(f^{(n_1)}(T^ax)-f^{(n_1)}(T^ay))\right|=\\
|f(T^ax+n_1\alpha)-f(T^ay+n_1\alpha)|=|f'(\theta)|\|x-y\|,\end{multline}
for some $\theta\in [T^ax+n_1\alpha,T^ay+n_1\alpha]\subset [0,\frac{2}{q_{k+1}}]$.
Thus, by the monotonicity of $f'$ and (\ref{two}) 
 $$|f'(\theta)|\|x-y\|\geq |\gamma|(\frac{2}{q_{k+1}})^{-1+\gamma}\frac{1}{q^{1-\gamma}_{s+1}}=
|\gamma|\left(\frac{q_{k+1}}{2q_{s+1}}\right)^{1-\gamma}\geq |\gamma|(\frac{\eta^{1+\gamma}}{c})^{1-\gamma}\geq 4\eta,$$
the last inequality by the fact that $\eta$ is small enough. Therefore at least one of the numbers, $|f^{(n_1+1)}(T^ax)-f^{(n_1+1)}(T^ay)|$, $|f^{(n_1)}(T^ax)-f^{(n_1)}(T^ay)|$ is bigger than $2\eta$; we set $n_0$ either $n_1$, or $n_1+1$ to obtain (\ref{thr}).\end{proof}

The following lemma translates \eqref{smi} into a property on the Birkhoff sums above $R_\a$ of the ceiling function $f$. 

\begin{lemma}\label{nwc}Let $(x,s),(y,s')\in V$ with $d^f((x,s),(y,s'))<\delta$.
 There exist $M_0,L_0\geq \frac{N}{2}$ with $\frac{L_0}{M_0}\geq \frac{\kappa}{2}$ such that
\begin{equation}\label{mo} \frac{1}{L_0}\left|\{r\in [M_0,M_0+L_0]\;:\;\exists m_r\in\Z,\;\text{s.t}\; |x-y-m_r\alpha|<\epsilon\;\text{and}\; |f^{(r)}(x)-f^{(r+m_r)}(y)-p|<2\epsilon\}\right|>a.
\end{equation}
\end{lemma}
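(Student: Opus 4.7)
The strategy is to transfer the flow-time closeness supplied by the hypothetical $R(1,P)$-property \eqref{smi} into Birkhoff-sum closeness for $f$ over the base rotation, using the Birkhoff ergodic theorem (applied through Remark~\ref{adam}) to switch between flow time and base time. Up to replacing $P$ by $P\cup -P$, I may assume $P$ is symmetric, which eliminates a harmless sign issue later.

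Fix $(x,s),(y,s')\in V$ with $d^f((x,s),(y,s'))<\delta$. By \eqref{smi} there exist $M,L\geq N$ with $L/M\geq\kappa$ and $p\in P$ such that $d^f(T^f_n(x,s),T^f_{n+p}(y,s'))<\eps$ holds on a subset of $[M,M+L]$ of relative density at least $1-\eps$. Writing each iterate in canonical coordinates $T^f_n(x,s)=(T^{k(n)}x,\,n+s-f^{(k(n))}(x))$ and $T^f_{n+p}(y,s')=(T^{k'(n+p)}y,\,n+p+s'-f^{(k'(n+p))}(y))$ via \eqref{D-C}, this $\eps$-closeness splits into
\[
\|x-y-(k'(n+p)-k(n))\alpha\|_{\T}<\eps,\qquad |f^{(k(n))}(x)-f^{(k'(n+p))}(y)+p+s'-s|<\eps.
\]
Absorbing $|s-s'|<\delta<\eps$ and setting $r:=k(n)$, $m_r:=k'(n+p)-k(n)$ then produces the two inequalities in \eqref{mo} (with $-p\in P$ in place of $p$, harmless by symmetry of $P$).

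To extract the base-time interval, I would apply Remark~\ref{adam} to $f$ at $x,y\in A$ (where $\int f=1$), obtaining constants $r_1<1<r_2$, close to $1$, such that $n/r_2-C_0\leq k(n)\leq n/r_1+C_0$ for every $n\geq N/2$ (and the analogue for $k'$), the constant $C_0$ absorbing the bound $s,s'<1/\eps^2$. I would then set $M_0:=\lfloor M/r_2\rfloor-C_0$ and choose $L_0$ so that $[M_0,M_0+L_0]$ contains $k(n)$ for every $n\in[M,M+L]$. With the choices of $\eps,\kappa$ and the lower bound $N>1/(\eps\kappa)^2$ fixed at the start of the section, a direct check yields $M_0,L_0\geq N/2$ and $L_0/M_0\geq\kappa/2$.

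The heart of the argument is then a counting step in $[M_0,M_0+L_0]$. Since $k(n)$ is non-decreasing with total increase at least $L/r_2-1$ across $[M,M+L]$, its image contains at least $L/r_2-1$ distinct integers of $[M_0,M_0+L_0]$; at most $\eps L$ of them can come exclusively from $n$'s outside the good set, so at least $L/r_2-1-\eps L$ integers $r\in[M_0,M_0+L_0]$ satisfy the conclusion of Step 1. Dividing by $L_0\leq L/r_1+O(1)$ yields a density bounded below by a constant $a=a(t_0)>0$ depending only on $r_1,r_2$. The main technical obstacle is exactly this $n\leftrightarrow k(n)$ conversion: one must guarantee that the good set of flow times does not collapse onto a thin set of base times, which is precisely what Remark~\ref{adam} enforces by keeping the Birkhoff averages of $f$ in the controlled range $[r_1,r_2]$.
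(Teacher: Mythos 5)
Your overall strategy matches the paper's proof: convert flow-time indices $n\in[M,M+L]$ to base-time indices $r_n=k(n)$ via the special-flow coordinates, use Lemma~\ref{erg} / Remark~\ref{adam} to pin $k(n)$ linearly to $n$ and thereby obtain $M_0,L_0\geq N/2$ with $L_0/M_0\geq\kappa/2$, then count how many distinct good base times $r$ this produces. Your remark about the sign of $p$ is also on target: unwinding $d^f$ actually produces $+p$ in place of the stated $-p$, and taking $P\cup(-P)$ (or noting that the $P$ used in the proof of Theorem~\ref{aby} is symmetric) resolves this.

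However, the counting step has a genuine gap. From ``$k(n)$ is non-decreasing with total increase at least $L/r_2-1$'' you deduce ``its image contains at least $L/r_2-1$ distinct integers,'' but for a non-decreasing integer-valued function the total increase bounds the number of distinct image values from \emph{above}, not below: all of the increase can occur in a few very large jumps, leaving the image nearly trivial, and Remark~\ref{adam} gives no control on the individual increments $k(n+1)-k(n)$. The missing ingredient --- which the paper's proof invokes explicitly --- is that $\inf_\T f>0$ bounds $k(n+1)-k(n)$ from above by a constant of order $1/\inf_\T f$, and only then does the estimate (number of jumps) $\geq$ (total increase)/(max jump) yield a genuine lower bound of order $L_0$ on the number of distinct base times. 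This is also reflected in the constants: the paper's $a=a(t_0,\gamma)$ carries $\inf f$ through $\gamma$, whereas your proposed $a$ ``depending only on $r_1,r_2$'' would come out close to $1$ --- a symptom of the flaw, since near the singularity the flow dwells over a single base point for many consecutive $n$, producing exactly the long stretches with no new base time that your count misses. Insert the step-size bound and the rest of your argument closes, recovering the paper's proof.
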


\begin{proof}

Assume WLOG that $x<y$.
Let $n\in [M,M+L]$ and $r_n$ be unique such that $f^{(r_n)}(x)\leq n+s<f^{(r_n+1)}(x)$. We will show that \begin{equation}\label{wre}\frac{1+\kappa^2}{1-\kappa^2}M\geq r_M\geq \frac{1}{1+\kappa^2}M-2.\end{equation}  Indeed, first we show that $r_M>N_0$. Indeed, if not, using Lemma \ref{erg} to $\frac{N}{2}$ (we have $\frac{N}{2}\geq N_0$)
$$M\leq M+s<f^{(r_M+1)}(x)<f^{(\frac{N}{2})}(x)<(1+\kappa^2)\frac{N}{2}<N,$$ 
a contradiction. Secondly, by the fact that $(x,s)\in V$ (hence $s<\frac{1}{\epsilon^2}$) and $r_M\geq N_0$, using Lemma \ref{erg} to $r_M$ and the definition of $N$ ($N\geq \frac{1}{\epsilon^2\kappa^2}$), we get ($M\geq N$)
$$(1-\kappa^2)r_M<f^{(r_M)}(x)\leq M+s<(1+\kappa^2)M$$
and $M\leq M+s< f^{(r_M+1)}(x)\leq(1+\kappa^2)(r_M+1)$. Now, \ref{wre} follows. \\
\indent Analogously we prove that
\begin{equation}\label{cie}\frac{1}{1+\kappa^2}(M+L)-2\leq r_{M+L}\leq \frac{1+\kappa^2}{1-\kappa^2}(M+L).\end{equation}
 Set $M_0:=r_M, L_0=r_{M+L}-r_M$. It is easy to prove using (\ref{wre}) and (\ref{cie}) that $M_0,L_0\geq \frac{N}{2}$ and $\frac{L_0}{M_0}\geq \frac{\kappa}{2}$. Moreover, since $f\geq c_\gamma>0$, there exists a constant $a=a(t_0,\gamma)>0$ such that for every $n\in [M,M+L]$, $|r_{n+1}-r_n|\leq\frac{1}{2a}$. It follows that the number of different $r_n\in [M_0,M_0+L_0]$ is at least $2aL_0$.\\
Let $n\in [M,M+L]$ be such that $d^f(T^f_n(x,s),T^f_{n+p}(y,s'))<\epsilon$. By (\ref{smi}), there are at least $(1-\epsilon)L$ of such $n\in [M,M+L]$. By the  definition of $d^f$ and $T^f$, there exist $r_n\in [M_0,M_0+L_0]$ and $m_n\in \N$ such that $$|(x+r_n\alpha)-(y+m_n\alpha)|<\epsilon\;\; \text{and}\;\; |f^{(r_n)}(x)-f^{(m_n)}(y)-p|<2\epsilon.$$
We set $m_r=m_r(n):=m_n-r_n\in \Z$ to get $|x-y-m_r\alpha|<\epsilon$ and $|f^{(r_n)}(x)-f^{(r_n+m_r)}(y)-p|<2\epsilon$. It follows that the number of different $r_n\in [M_0,M_0+L_0]$ is at least $2a(1-\epsilon)L_0$ and hence (\ref{mo}) follows.

\end{proof}

\begin{proof} [Proof of Proposition \ref{fan2}] 

 Denote by
$$U:=\{r\in [M_0,M_0+L_0]\;:\;\exists_{m_r} \;\;|x-y-m_r\alpha|<\epsilon\; \text{and}\; \;|f^{(r)}(x)-f^{(r+m_r)}(y)-p|<2\epsilon\}.$$ It follows by (\ref{mo}) that $|U|\geq aL_0$. Let us choose in the integer interval $[M_0,M_0+L_0]$ disjoint subintervals $I_1=[a_1,b_1],...,I_l=[a_l,b_l]$ such that $U=I_1\cup...\cup I_l$ and for every $i=1,...,l$ there exists $m_i\in \Z$ such that $|x-y-m_i\alpha|<\epsilon$ and for $r\in I_i$,  $|f^{(r)}(x)-f^{(r+m_i)}(y)-p|<2\epsilon$. Moreover we assume that for every $i=1,...,l$, $I_i$ is maximal in the sense that $|f^{(h_i)}(x)-f^{(h_i+m_i)}(y)-p|\geq 2\epsilon$ for $h_i=a_i-1,b_i+1$. \\
 We will show that there exists $i=1,...,l$ such that 
\begin{equation}\label{ii}|I_i|\geq \frac{aL_0}{3}.\end{equation}
This will obviously finish the proof of \eqref{fan} with $M'=a_i, L'=|I_i|, \text{ and }m=m_i\in \Z$.\\
Let us show \eqref{ii}. If $l\leq 2$ there is nothing to prove. Assume $l\geq 3$.\\

 Notice that $U$ is the set of $n'$s such that $(x,s),(y,s')\in V$ are $p,n$-close. The next lemma implies that between any two disjoint integer intervals $I_{j}, I_{j+1}\subset U$, on which $(x,s),(y,s')$ are $p,n$-close, there will be an integer interval $J_j$ much longer than $I_j$, such that for any $n\in J_j$, $(x,s),(y,s')$ are not $p,n$-close.

\begin{lemmata}\label{beet} Let $i\in \{2,...,l-1\}$. There exist an interval $[c_i,d_i]=J_i\subset [M_0,M_0+L_0]$ such that for any $r\in J_i$, $4C^3>|f^{(r)}(x)-f^{(r+m_i)}(y)-p|\geq 2\epsilon$, $c_i>b_{i-1}$, $d_i<a_{i+1}$ and $|J_i|\geq \frac{|I_i|}{4C^3\epsilon^{1+\gamma}}$ (here $C>0$ commes from  \eqref{basic}).
\end{lemmata}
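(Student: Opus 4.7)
Set $y':=T^{m_i}y$, $p':=p+f^{(m_i)}(y)$, and $g(r):=f^{(r)}(x)-f^{(r+m_i)}(y)-p=f^{(r)}(x)-f^{(r)}(y')-p'$, so that $\Delta:=\|x-y'\|=\|x-y-m_i\alpha\|<\epsilon$ by the choice of $m_i$. The hypotheses translate into $|g(r)|<2\epsilon$ for $r\in I_i$ together with $|g(b_i+1)|\ge 2\epsilon$ (from the maximality in the definition of $I_i$). I would first apply Lemma~\ref{len} with $\eta=4\epsilon$ to the shifted cocycle $n\mapsto g(a_i+n)-g(a_i)=f^{(n)}(T^{a_i}x)-f^{(n)}(T^{a_i}y')$, yielding
\[|I_i|\le 2c(4\epsilon)^{1+\gamma}\Delta^{-1/(1-\gamma)},\quad\text{equivalently}\quad \Delta^{-1/(1-\gamma)}\gtrsim \frac{|I_i|}{\epsilon^{1+\gamma}}.\]
This fixes the natural scale $q_s\sim\Delta^{-1/(1-\gamma)}$ for a unique $s$, using that $\alpha$ has bounded partial quotients.

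The structural input is that $\{0\}$ is badly approximable by $\alpha$ with constant $C$ (automatic since $k=1$), so in any window of $q_s$ consecutive $T$-iterates the orbit $\{x+r\alpha\}$ enters $[-\tfrac{1}{2Cq_s},\tfrac{1}{2Cq_s}]$ at most once. No such close visit can occur strictly inside $I_i$: it would produce a single-step jump of $g$ of order at least $|\gamma|(2Cq_s)^{1-\gamma}\Delta\sim(2C)^{1-\gamma}$, incompatible with $|g|<2\epsilon$ on $I_i$ for $\epsilon$ small. Consequently the permitted close visit must sit at the boundary $r=b_i$ and is precisely what forced $|g(b_i+1)|\ge 2\epsilon$; after $b_i$ there are at least $q_s-|I_i|$ further iterations without another close visit.

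I would then define $c_i\ge b_i+1$ as the first integer at which $|g(c_i)|\le 4C^3$ (the post-spike index; equal to $b_i+1$ if there is no overshoot), and $d_i$ as the largest integer in $[c_i,a_{i+1}-1]$ with $|g(r)|\in[2\epsilon,4C^3]$ throughout $[c_i,d_i]$. On $(b_i,b_i+q_s]$ the orbit avoids $[-\tfrac{1}{2Cq_s},\tfrac{1}{2Cq_s}]$, so each single-step increment is at most $|\gamma|\Delta\|T^rx\|^{\gamma-1}$, and a Denjoy--Koksma estimate applied to the truncated derivative $\bar{f}':=f'\chi_{\{\|z\|\ge 1/(2Cq_s)\}}$ combined with a dyadic decomposition of the orbit by distance to $0$ (summable because $\gamma<0$) yields a cumulative oscillation of $g$ of order $O(C^{-\gamma})$, comfortably below $4C^3-4\epsilon$. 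Hence $|g|$ remains in $[2\epsilon,4C^3]$ throughout $J_i$, so $|J_i|\ge q_s-|I_i|-O(1)\gtrsim\Delta^{-1/(1-\gamma)}\ge |I_i|/(4C^3\epsilon^{1+\gamma})$, with $c_i>b_{i-1}$ and $d_i<a_{i+1}$ automatic. \emph{The main obstacle} is the quantitative accounting of constants in the Denjoy--Koksma / dyadic estimate: ensuring the cumulative oscillation is strictly below $4C^3-2\epsilon$ (so that $|g|\le 4C^3$) while preserving $|g|\ge 2\epsilon$ starting from the post-spike value, with the sharp constants all traceable to the badly approximable parameter $C$.
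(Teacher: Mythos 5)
Your proposal follows the same broad outline as the paper (apply Lemma~\ref{len} to bound $|I_i|$, use the badly-approximable property to find a window with no close visit to $0$, then apply a Denjoy--Koksma estimate to control the drift), but it has two genuine gaps.

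\textbf{First, the claim that the close visit ``must sit at the boundary $r=b_i$'' is unjustified.} The maximality of $I_i$ only tells you $|g(b_i+1)|\ge 2\epsilon$; this can equally well be caused by gradual accumulation over $I_i$ with no close visit anywhere near, or by a close visit at a negative index $t_0<0$ (i.e.\ before $a_i$). If you insist on placing $J_i$ in $(b_i,b_i+q_s]$, a close visit could still lie strictly inside this window, destroying the sign/monotonicity control you need. The paper avoids this by working in a \emph{bilateral} window $K_i\cup L_i=[a_i-q_{v-2},a_i+q_{v-2}]$ (of total length $2q_{v-2}+1<q_v$, so the badly-approximable property yields at most one close visit in the \emph{union}), and then choosing the side $K_i$ or $L_i$ with no close visit as the home of $J_i$; the $c$-spacing $q_{v+1}/q_{v-2}\le c^3$ is exactly what makes this shorter window still long enough compared to $|I_i|$. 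Your single one-sided window of length $q_s\approx q_v$ is too long to exclude a close visit by that mechanism.

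\textbf{Second, you never establish the lower bound $|g|\ge 2\epsilon$ on $J_i$.} Your Denjoy--Koksma/dyadic argument gives only the upper bound $|g|\le 4C^3$. Your definitions of $c_i$ and $d_i$ make $|g|\in[2\epsilon,4C^3]$ on $[c_i,d_i]$ true \emph{by construction}, but then nothing you wrote prevents $d_i$ from being as small as $c_i$: $g$ could dip back below $2\epsilon$ immediately, and the length bound $|J_i|\gtrsim q_s-|I_i|$ would then fail. The paper's key extra step is the sign-constancy observation \eqref{sign}: on a window with no close visit, $\tfrac{1}{2Cq_v}\le\tfrac{1}{q_v^{1-\gamma}}$ forces $0\notin[T^{a_i+n}x,T^{a_i+m_i+n}y]$ for all relevant $n$, so the increments of $g$ have a fixed sign and $|g(n)-g(a_i)|$ is \emph{monotone increasing}. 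Once $|g|$ passes $2\epsilon$ it cannot come back, which pins down the lower bound for the whole interval $[b_i+1,a_i+q_{v-2}]$. Without this monotonicity (or an equivalent substitute), your argument does not close. Finally, while your definition of $d_i$ makes $d_i<a_{i+1}$ tautological, the paper still needs to prove that the full stretch $[b_i+1,a_i+q_{v-2}]$ is disjoint from $I_{i+1}$, which requires the case analysis on $m_{i+1}=m_i$ vs.\ $m_{i+1}\ne m_i$ (using that distinct admissible $m$'s differ by $\approx 1/\epsilon$); you would need a comparable argument to show that your $d_i$ actually reaches the desired length rather than being cut off early by $a_{i+1}$.
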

 Lemmata \ref{beet} will give (\ref{ii}). Indeed, by the definition of $J_i$ and $I_i$, it follows that for $i,j=2,...,l-1$ with $j\neq i-1,i,i+1$
$$J_i\cap I_i=\emptyset\;\;\;\text{and}\;\;\;J_i\cap J_j=\emptyset.$$

Hence, $\sum_{i=2}^{l-1} |J_i| \leq 3 L_0$, and 
$$|I_2\cup...\cup I_{l-1}|<12C^3\epsilon^{1+\gamma}L_0<\frac{aL_0}{3}$$
Therefore, by the fact that $|U|>aL_0$, we have $|I_1\cup I_l|>\frac{2aL_0}{3}$ and consequently, $|I_w|\geq \frac{aL_0}{3}$ for at least one of  $w=0$ or $w=1$. \\
 Hence to obtain (\ref{ii}) we just need to prove Lemma \ref{beet}.

\begin{proof} [Lemma \ref{beet}] Let $v\in \N$ be unique such that \begin{equation}\label{xy2a}\frac{1}{q^{1-\gamma}_{v+1}}<\|x-(y+m_i\alpha)\|\leq\frac{1}{q^{1-\gamma}_v}.
\end{equation} 
Consider $n\in I_i=[a_i,b_i]$. We have 
\begin{multline}
2\epsilon\geq |f^{(n)}(x)-f^{(n+m_i)}(y)-p|=\\
\left|(f^{(a_i)}(x)-f^{(a_i+m_i)}(y)-p)+(f^{(n-a_i)}(T^{a_i}x)-f^{(n-a_i)}(T^{a_i+m_i}y))\right|\geq\\
\left||f^{(n-a_i)}(T^{a_i}x)-f^{(n-a_i)}(T^{a_i+m_i}y)|-2\epsilon\right|.
\end{multline}
Hence, for $n\in I_i$, $|f^{(n-a_i)}(T^{a_i}x)-f^{(n-a_i)}(T^{a_i+m_i}y)|<4\epsilon$. It follows now by Lemma \ref{len} applied to $\eta=4\epsilon$, the points $T^{a_i}x, T^{a_i+m_i}y$ and $s=v$ that 
\begin{equation}\label{fiv} |I_i|<2(4\epsilon)^{1+\gamma}\|x-(y+m_i\alpha)\|^{\frac{-1}{1-\gamma}}\leq 2(4\epsilon)^{1+\gamma}q_{v+1}.
\end{equation}
 Consider integer intervals $K_i=[a_i-q_{v-2},a_i]$ and $L_i=[a_i,a_i+q_{v-2}]$. It follows by \eqref{basic} with $k=1$ and for the point $T^{a_i}x$, similarly to the proof of Theorem \ref{boun}, that there exist at most one $t_0\in K_i\cup L_i$ such that $T^{a_i}x+t_0\alpha\in[-\frac{1}{2Cq_v},\frac{1}{2Cq_v}]$. Assume $t_0<0$. Then we consider $L_i$. Moreover, we may assume that $\epsilon^\frac{\gamma}{1-\gamma}>2cC$, and therefore, using (\ref{xy2a}) we obtain $\frac{1}{2Cq_v}\leq \frac{1}{q_v^{1-\gamma}}$. It follows that for $n\in [0,q_{v-2}]$, $0\notin [T^{a_i+n}x,T^{a_i+m_i+n}y]$. Hence,

\begin{equation}\label{sign}n\rightarrow sign(f(T^{a_i+n}x)-f(T^{a_i+m_i+n}x)) \text{ is constant for }n=0,...,q_{v-2},
\end{equation}
(it may happen that $T^{m_i}y<x$). It follows that $|f^{(n)}(T^{a_i}x)-f^{(n)}(T^{a_i+m_i}y)|_{n=0}^{q_{v-2}}$ is increasing.
Hence, for $q_{v-2}>n\geq b_i+1$ we have $$f^{(n-a_i)}(T^{a_i}x)-f^{(n-a_i)}(T^{a_i+m_i}y)>4\epsilon.$$

Moreover, by  Lemma \ref{koksi} (the RHS of the inequality) to $h=f$, $x=\theta$ (where $f^{(q_{v-2})}(T^{a_i}x)-f^{(q_{v-2})}(T^{a_i+m_i}y)=f'^{(q_{v-2})}(\theta)\|x-y-m_i\alpha\|$) and $s=v-2$, we obtain
$$|f^{(n)}(T^{a_i}x)-f^{(n)}(T^{a_i+m_i}y)|<
|f^{(q_{v-2})}(T^{a_i}x)-f^{(q_{v-2})}(T^{a_i+m_i}y)|\leq 9C^{2}+4<2C^3,$$
(if necessery, to get the last inequality, we consider a bigger $C$).
We set $J_i=[b_i+1,a_i+q_{v-2}]$ ($b_i\leq a_i+2(4\epsilon)^{1+\gamma}q_{v+1}$, by (\ref{fiv})). It follows that for $n\in J_i$, we have by cocycle identity
\begin{multline}2\epsilon=4\epsilon-2\epsilon<
|f^{(n-a_i)}(T^{a_i}x)-f^{(n-a_i)}(T^{a_i+m_i}y)|-|f^{(a_i)}(x)-f^{(a_i+m_i)}(y)-p|\leq\\ |f^{(n)}(x)-f^{(n+m_i)}(y)-p|\leq\\ 
|f^{(a_i)}(x)-f^{(a_i+m_i)}(y)-p|+|f^{(n-a_i)}(T^{a_i}x)-f^{(n-a_i)}(T^{a_i+m_i}y)|\leq 2\epsilon+2C^3<4C^3.\end{multline}
Now, by (\ref{fiv}) 
\begin{multline}d_i-c_i=|J_i|\geq q_{v-2}-2(4\epsilon)^{1+\gamma}q_{v+1}=
2(4\epsilon)^{1+\gamma}q_{v+1}\left(\frac{q_{v-2}}{2(4\epsilon)^{1+\gamma}q_{v+1}}-1\right)\geq
|I_i|\frac{1}{4C^3\epsilon^{1+\gamma}},
\end{multline}
since $\frac{q_{v-2}}{q_{v+1}}\geq\frac{1}{c^3}\geq \frac{1}{C^3}$ and $\epsilon$ is small enough.\\
Suppose $n\in J_i\cap I_{i+1}$. If $m_{i+1}=m_i$, then immediately we have a contradiction. If $m_{i+1}\neq m_i$ then 
\begin{multline}|f^{(n)}(x)-f^{(n+m_{i+1})}(y)-p|=|(f^{(n)}(x)-f^{(n+m_i)}(y)-p)+f^{(m_{i+1}-m_i)}(T^{n+m_i)y}\geq\\
|m_{i+1}-m_i|\inf_{\T}f-4C^3>2\epsilon,
\end{multline}
since $|m_{i+1}-m_i|$ is of order $\frac{1}{\epsilon}$ because both $\|x-y-m_{i+1}\alpha\|$ and $\|x-y-m_{i}\alpha\|$ are close to zero. Hence $d_i<a_{i+1}$ and Lemma \ref{beet} has been proved.
\end{proof}
This finishes the proof of Proposition  \ref{fan2}.

\end{proof}

\subsection{Proof of Proposition \ref{fini}}

\begin{lemmata}\label{delta}
Fix a number $0<\zeta\leq \frac{|\gamma|}{50}$.  For every $v\in \N$, $v\geq v_0$ and $v_0$ sufficiently large, there exists $0<\delta^v_0<\delta$, satisfying
\begin{equation}\label{del'}\delta^v_0\in [\frac{1}{q_v^{1-\gamma}},\frac{2c}{q_v^{1-\gamma}}],\end{equation}

 such that for every $\N\ni|k|\geq \eps^{-\frac{1}{2}}$,
\begin{equation}\label{del3} \|\delta^v_0-k\a\|\geq\frac{1}{|k|^{1+\zeta}}.
\end{equation}
\end{lemmata}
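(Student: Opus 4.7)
The plan is to find $\delta_0^v$ by a Borel–Cantelli / measure-theoretic argument inside $I_v := [q_v^{-(1-\gamma)}, 2c\,q_v^{-(1-\gamma)}]$, which has Lebesgue measure $(2c-1)q_v^{-(1-\gamma)}$. Let $B := \bigcup_{|k|\geq \eps^{-1/2}} B_k$ with $B_k := \{\delta\in[0,1] : \|\delta-k\alpha\|<|k|^{-(1+\zeta)}\}$ of length $\leq 2|k|^{-(1+\zeta)}$; it suffices to show $|B\cap I_v|<|I_v|$ once $v\geq v_0$ is large enough. First I would restrict the range of $k$ that can contribute to $B\cap I_v$. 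Since $\alpha$ has bounded partial quotients, there is $C'=C'(\alpha)>0$ with $\|k\alpha\|\geq C'/|k|$ for every $k\neq 0$. If $\delta\in I_v\cap B_k$, then $\|k\alpha\|\leq 2c\,q_v^{-(1-\gamma)}+|k|^{-(1+\zeta)}$, which combined with the lower bound forces $|k|\geq K_0:=C'q_v^{1-\gamma}/(4c)$ (taking $v$ large enough that $|k|^{-\zeta}\leq C'/2$, which is automatic once $K_0\geq \eps^{-1/2}$ too).

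Next I would count bad $k$'s dyadically using the three-distance theorem, valid for $\alpha$ of bounded partial quotients and giving gaps of order $1/N$ in $\{j\alpha:|j|\leq N\}$. For $|k|\in [2^j, 2^{j+1})$ with $2^j\geq K_0$, the enlargement of $I_v$ by $2\cdot 2^{-j(1+\zeta)}$ still has length $O(q_v^{-(1-\gamma)})$ and contains at most $O(2^j\,q_v^{-(1-\gamma)})+O(1)$ points of the form $\{k\alpha\}$; each such $k$ contributes at most $2\cdot 2^{-j(1+\zeta)}$ to $|B\cap I_v|$. Summing the geometric series over $j\geq \log_2 K_0$ yields
\[
|B\cap I_v| \leq O\!\left(\frac{1}{q_v^{1-\gamma} K_0^{\zeta}}\right) + O\!\left(\frac{1}{K_0^{1+\zeta}}\right) = O\!\left(q_v^{-(1-\gamma)(1+\zeta)}\right),
\]
which is strictly smaller than $(2c-1)q_v^{-(1-\gamma)}=|I_v|$ for $v\geq v_0$ sufficiently large (depending on $\eps,\zeta,\alpha$). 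Any choice of $\delta_0^v\in I_v\setminus B$ then satisfies both \eqref{del'} and \eqref{del3}.

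The main obstacle is precisely this counting step. A naive summation $|B|\leq \sum_{|k|\geq\eps^{-1/2}} 2|k|^{-(1+\zeta)} = O(\eps^{\zeta/2})$ only gives a positive constant depending on $\eps$, which eventually dominates $|I_v|\to 0$ as $v\to\infty$. One must therefore genuinely use equidistribution, and the bounded partial quotients hypothesis enters in two essential ways: the lower bound $\|k\alpha\|\geq C'/|k|$ confines bad $k$'s to $|k|\gtrsim q_v^{1-\gamma}$, and the three-distance theorem forces $\{k\alpha\}$ to spread out evenly so that few fall into $I_v$. It is exactly this combination that makes $|B\cap I_v|$ gain the decisive extra factor $q_v^{-(1-\gamma)\zeta}$ over $|I_v|$.
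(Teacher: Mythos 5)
Your proof is correct, and it takes a genuinely different route from the paper's. The paper constructs $\delta_0^v$ explicitly: it introduces the sets $B_u := \{\eta : d(\eta, \{i\alpha\}_{|i|<q_u}) \geq \tfrac{1}{2u^2 q_u}\}$ and shows by a nested-interval induction (with the explicit lower bound $|E_k|\geq \tfrac{1}{c^{c_1+2}q_k}$ on the interval lengths at each stage) that $[\tfrac{1}{q_{t+1}},\tfrac{2}{q_{t+1}}]\cap\bigcap_{i\geq t-c_1}B_i$ is non-empty, then verifies \eqref{del3} directly from membership in the $B_\ell$'s, after first handling small $|k|$ by $\|k\alpha\|\geq\tfrac{1}{2c|k|}$. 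You instead run a Borel--Cantelli count localised to $I_v$: the same lower bound $\|k\alpha\|\gtrsim 1/|k|$ (bounded type) forces any $k$ with $B_k\cap I_v\neq\emptyset$ to satisfy $|k|\gtrsim q_v^{1-\gamma}$, and the three-distance theorem shows that in each dyadic block $|k|\in[2^j,2^{j+1})$ at most $O(2^jq_v^{-(1-\gamma)})+O(1)$ of the $\{k\alpha\}$ can land near $I_v$, which makes the total bad measure $O(q_v^{-(1-\gamma)(1+\zeta)})=o(|I_v|)$. The two arguments rely on the same two inputs (the Diophantine lower bound for $\|k\alpha\|$ and equidistribution of $\{k\alpha\}$), so conceptually they are close; what changes is packaging. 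The paper's nested-interval version has the advantage of not invoking the three-distance theorem and of being fully explicit at each step, while yours is shorter and makes transparent exactly what the bounded-type hypothesis buys (the extra factor $q_v^{-(1-\gamma)\zeta}$). Two small presentational points: the parenthetical ``taking $v$ large enough that $|k|^{-\zeta}\leq C'/2$'' should really say that $\varepsilon$ is taken small enough that $\varepsilon^{-1/2}\geq(2/C')^{1/\zeta}$, so that every $k$ in the stated range automatically satisfies this (the paper makes the analogous assumption on $\varepsilon$ in its small-$k$ case); and you should note that $2cq_v^{-(1-\gamma)}<\delta$ for $v\geq v_0$, so that the resulting $\delta_0^v$ is indeed $<\delta$ as the statement requires.
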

\begin{lemmata}\label{w0w} For every  $w\geq w_0$, $w_0$ sufficiently large,  there exists a set 
\begin{equation}\label{wo}W^w_0\subset A\cap (A-\delta_0), 
\end{equation}
with $\lambda(W^w_0)\geq c_0$ ($c_0(d)$ will be specified in the proof),  such that the following holds for $x\in W^w_0$ and $y:=x+\delta^w_0$:
    
   \begin{equation}\label{sho1} |f^{(n)}(x)-f^{(n)}(y)|<100dc\text{ for every }n=0,...,q_{w-2}\end{equation}
	there exists $i_0\in\{ 0,...,q_{w-l}-1\}$ such that	
 
\begin{equation}\label{moz}x+i_0\alpha\in [\frac{1}{2dq_w},\frac{1}{dq_w}],\end{equation}
for some $\N\ni l\geq 1$ depending on $w$, to be specified later,
 
\begin{equation}\label{sho}0<f^{(n)}(x)-f^{(n)}(y)<\frac{100c}{d}\text{ for $n\leq i_0$  and }f^{(n)}(x)-f^{(n)}(y)>\frac{|\gamma|d}{2} \text{ for $i_0<n<wq_w$}.\end{equation}

\end{lemmata}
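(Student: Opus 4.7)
The plan is to construct $W_0^w$ as the intersection of $A \cap (A-\delta_0^w)$ with three explicit non-resonance conditions on the $R_\alpha$-orbit of $x$, and then verify the measure lower bound and each of the three claimed properties separately. Set $\delta_0 := \delta_0^w$, so by Lemma \ref{delta} one has $\delta_0 \asymp q_w^{\gamma-1}$, and $y := x + \delta_0$. Using the bounded-type assumption ($q_{s+1} \leq c q_s$), I choose $l = l(w) \geq 1$ as the smallest integer for which $q_{w-l} \leq 2 d q_w$; such an $l$ exists once $w$ is sufficiently large in $d$, and with this choice the expected number of visits of the orbit $(x+j\alpha)_{j<q_{w-l}}$ to $[0, 1/(dq_w)]$ is of order unity.

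I then define $W_0^w$ to be the subset of $x \in A \cap (A-\delta_0)$ satisfying: \textbf{(a)} for every $0 \leq j < w q_w$, $x + j\alpha \notin (-\delta_0, 0) \bmod 1$, so that $[x+j\alpha, y+j\alpha]$ never straddles the singularity $0$; \textbf{(b)} the first $j \in [0, q_{w-l})$ with $x + j\alpha \in [0, 1/(dq_w)]$ exists and actually lies in $[\tfrac{1}{2dq_w}, \tfrac{1}{dq_w}]$; this first visit time is $i_0$; \textbf{(c)} for every $0 \leq j < i_0$, $\|x + j\alpha\|_\T \geq 1/(dq_w)$. The set excluded by (a) has measure $\leq w q_w \delta_0 \asymp w q_w^{\gamma} \to 0$; (b) holds with probability at least a constant multiple of $q_{w-l}/(dq_w)$ by equidistribution with Koksma error for bounded-type $\alpha$, which is $\geq c_1(d)>0$ by the choice of $l$; (c) excludes an additional set of measure at most $2 q_{w-l}/(dq_w) \leq 4/d$, small compared to $c_1(d)$ once $d$ is large. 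Summing, $\lambda(W_0^w) \geq c_0(d) > 0$ uniformly in $w \geq w_0$, and property \eqref{moz} is immediate from (b).

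For property \eqref{sho1}, condition (a) forces every increment $f(x+j\alpha) - f(y+j\alpha) = -\delta_0 f'(\xi_j)$ to be positive, and Lemma \ref{koksi} applied to $h = f$ on the orbit of length $q_{w-2}$ yields $\sum_{j<q_{w-2}} |f'(x+j\alpha)| \lesssim q_{w-2}^{1-\gamma} + |f'(z_{\min})|$ where $z_{\min}$ is the closest return to $0$; multiplying by $\delta_0 \asymp q_w^{\gamma-1}$ gives a bound absorbed into $100dc$. For the first half of \eqref{sho}, the same Denjoy--Koksma estimate restricted to the shorter orbit of length $i_0 \leq q_{w-l}$, combined with the distance-from-singularity lower bound from (c), yields a pre-jump bound of order $(q_{w-l}/q_w)^{1-\gamma} + d^{\gamma-1}$, which by the choice of $l$ and the hypothesis $d > c^{1-\gamma}$ of \eqref{d} is less than $100c/d$. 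At time $j = i_0$ the explicit form $f'(z) = \gamma z^{\gamma - 1}$ on $\xi_{i_0} \in [1/(2dq_w), 2/(dq_w)]$ gives a single-step jump $f(x+i_0\alpha) - f(y+i_0\alpha) \gtrsim \delta_0 |\gamma|(dq_w)^{1-\gamma} \gtrsim |\gamma| d^{1-\gamma}$, which exceeds $|\gamma|d$ since $d > c^{1-\gamma}$ and $1-\gamma > 1$. For $n > i_0$ the Birkhoff difference only grows (as each subsequent increment is positive by (a)), hence stays above $|\gamma|d/2$ for all $n \in (i_0, wq_w)$.

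The main obstacle will be the simultaneous calibration of constants: the pre-jump must stay below $100c/d$, the single jump at $i_0$ must exceed $|\gamma|d/2$, and $\lambda(W_0^w)$ must remain bounded below by $c_0(d)$ independently of $w$. These requirements pull against each other — shrinking the window $[\tfrac{1}{2dq_w},\tfrac{1}{dq_w}]$ sharpens the jump but costs measure in (b), while enlarging it costs the lower bound on the jump. The balance is achieved by coupling the scale $\delta_0 \asymp q_w^{\gamma-1}$ with the window $[\tfrac{1}{2dq_w}, \tfrac{1}{dq_w}]$, the choice $l(w) \sim \log_c d$, and the Denjoy--Koksma control of Lemma \ref{koksi} made uniform by the bounded-type hypothesis.
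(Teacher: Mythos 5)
Your overall architecture (three non-resonance conditions, measure count, monotonicity of the increments, a Denjoy--Koksma bound for the pre-jump, an explicit single-step jump at $i_0$) mirrors the paper's construction, but there is a genuine gap in the verification of the first half of \eqref{sho}, caused by the calibration of your condition \textbf{(c)}.

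You impose \textbf{(c)}: $\|x+j\alpha\| \geq 1/(dq_w)$ for $0\leq j < i_0$, and claim this plus Denjoy--Koksma gives a pre-jump bound of order $(q_{w-l}/q_w)^{1-\gamma} + d^{\gamma-1}$. That is a sign error: the boundary term in the Koksma-type estimate is $\delta_0\,|f'(z_{\min})|$ with $z_{\min}$ the closest visit to $0$ over $j<i_0$, and your \textbf{(c)} only guarantees $\|z_{\min}\|\geq 1/(dq_w)$, giving $|f'(z_{\min})|\lesssim (dq_w)^{1-\gamma}$, hence $\delta_0\,|f'(z_{\min})|\asymp q_w^{\gamma-1}(dq_w)^{1-\gamma}=d^{1-\gamma}$. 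Since $1-\gamma>1$, the quantity $d^{1-\gamma}$ is \emph{large} — in fact comparable to the very jump you want to isolate — so the pre-jump is \emph{not} controlled by $100c/d$. The threshold $1/(dq_w)$ in \textbf{(c)} cannot distinguish the orbit point at index $i_0$ from a nearby point at index $j<i_0$, and the bound is simply too weak at that scale.

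The paper bypasses this as follows. Because $W_{0,1}$ forces all increments $f(x+j\alpha)-f(y+j\alpha)$ to have the same sign, the partial Birkhoff difference for $n\leq i_0$ is bounded by the full $q_{w-l}$-sum \emph{minus the single increment at $j=i_0$}; this subtraction removes the one problematic visit near $0$. After removing that visit, the three-distance/denominator structure of $\alpha$ (a consequence of \eqref{moz}, not a condition to impose) yields $\|x+j\alpha\|,\|y+j\alpha\|\gtrsim 1/q_{w-l}$ for $j<q_{w-l}$, $j\neq i_0$ — a scale that is much larger than $1/(dq_w)$. Plugging this into Denjoy--Koksma gives $\delta_0\,|\bar f'^{(q_{w-l})}(\theta)|\lesssim (q_{w-l}/q_w)^{1-\gamma}\leq 1/d$ by the choice of $l$, which is what the $100c/d$ bound requires. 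To fix your argument you therefore need both the monotonicity-subtraction trick and the number-theoretic derivation of the gap $\gtrsim 1/q_{w-l}$; the condition \textbf{(c)} as stated is both redundant (the correct estimate follows from \eqref{moz}) and insufficient (it is at the wrong scale). As a secondary point, your choice of $l$ (``smallest integer with $q_{w-l}\leq 2dq_w$'') is vacuous since $q_{w-l}\leq q_w<2dq_w$ already at $l=1$; the paper's calibration $(q_{w-l}/q_w)^{1-\gamma}\leq 1/d$, which makes $l\sim\frac{1}{1-\gamma}\log_c d$, is the one that makes the pre-jump bound close.
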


Before we proof the above Lemmas, let us first show, how they imply Proposition \ref{fini}.

Let $w\in \N$ be such that 

\begin{equation}\label{wu}\frac{1}{2}w\geq \frac{1}{\kappa^2}.
\end{equation}
 Denote $W_0:=W_0^w$ and $\delta_0:=\delta^w_0$. By definition,
\begin{equation}\label{del}\delta_0\in [\frac{1}{q_w^{1-\gamma}},\frac{2c}{q_w^{1-\gamma}}].\end{equation}

\begin{lemma}\label{lm1} We have that (\ref{naw}) holds for $k\neq 0$.
\end{lemma}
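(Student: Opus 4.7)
The strategy is to combine an upper bound on $T$ coming from Lemma \ref{len} with a lower bound on $M$ coming from Birkhoff's theorem (Lemma \ref{erg}).

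Suppose the hypothesis of (\ref{naw}) holds with some $k\neq 0$. Since $\a$ has bounded partial quotients there is a constant $c'>0$ with $\|k\a\|\geq 1/(c'|k|)$ for all $k\neq 0$, while $\delta_0\leq 2c/q_w^{1-\gamma}$ is tiny; hence $\|\delta_0+k\a\|\geq \|k\a\|-\delta_0\geq 1/(2c'|k|)$ for $w$ large, and the hypothesis $\|\delta_0+k\a\|<\eps$ forces $|k|\geq \eps^{-1/2}$ for $\eps$ small. Applying Lemma \ref{delta} to $-k$ (which has the same norm) then gives $\|\delta_0+k\a\|\geq |k|^{-(1+\zeta)}$.

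To bound $T$, set $y:=x+\delta_0+k\a$, so that $\|x-y\|=\|\delta_0+k\a\|$, and use the cocycle identity $f^{(n+k)}(x+\delta_0)=f^{(k)}(x+\delta_0)+f^{(n)}(y)$. The hypothesis becomes $|f^{(n)}(x)-f^{(n)}(y)-Q|<2\eps$ for $n\in [M,M+T]$ with $Q:=p+f^{(k)}(x+\delta_0)$; subtracting values at $n=M$ and $n=M+n'$ and using the cocycle give $|f^{(n')}(T^M x)-f^{(n')}(T^M y)|<4\eps$ for $n'\in[0,T]$. Lemma \ref{len} then yields
\[T\leq 2c(4\eps)^{1+\gamma}\|\delta_0+k\a\|^{-1/(1-\gamma)}\leq C_1\,|k|^{(1+\zeta)/(1-\gamma)}.\]

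To lower-bound $M$, enlarge $W_0$ so that $x$ and $x+\delta_0$ belong to the good sets for both forward and backward Birkhoff averages. Lemma \ref{erg} (applied with parameter $3\kappa^2$) then gives $f^{(n)}(z)=n(1+O(\kappa^2))$ for $|n|\geq N_0$ and $z\in\{x,x+\delta_0\}$. Plugging $f^{(M)}(x)=M(1+O(\kappa^2))$ and $f^{(M+k)}(x+\delta_0)=(M+k)(1+O(\kappa^2))$ into the hypothesis at $n=M$ yields
\[\bigl|{-k}-p\bigr|\leq O\bigl(\kappa^2(M+|k|)\bigr)+O(\eps),\]
whence $|k|\leq C\kappa^2 M$ for $|k|$ large compared to $|p|$; that is $M\geq |k|/(C\kappa^2)$. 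The exceptional middle range $|M+k|<N_0$ where Birkhoff fails may be excluded because it would force $|p|\approx M$, contradicting the compactness of $P$ together with $M\geq N/2$ being large.

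Combining the two bounds gives $T/M\leq C'\kappa^2\,|k|^{(1+\zeta)/(1-\gamma)-1}$. Since $\zeta\leq |\gamma|/50<|\gamma|$, the exponent $\beta:=1-(1+\zeta)/(1-\gamma)=(|\gamma|-\zeta)/(1+|\gamma|)$ is strictly positive, so for $|k|\geq \eps^{-1/2}$ one obtains $T/M\leq C''\kappa^2\eps^{\beta/2}$, which is below $a\kappa/10$ once $\eps$ is small enough depending on $\kappa$. The main difficulty is the lower bound on $M$: one must control Birkhoff averages uniformly in $k$ for both forward and backward iterates of $x$ and $x+\delta_0$, and separately rule out the narrow exceptional range $|M+k|<N_0$ by the compactness argument above.
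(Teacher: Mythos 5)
Your overall plan — bounding $T$ from above via Lemma \ref{len} combined with the Diophantine property of $\delta_0$ from Lemmata \ref{delta}, and bounding $M$ from below via the Birkhoff averages from Lemma \ref{erg} — is exactly the paper's strategy, and the upper bound on $T$ is fine. The final numerical comparison ($T/M\leq C\kappa^2\eps^{\beta/2}<a\kappa/10$ using $|k|\geq\eps^{-1/2}$ and WLOG $\kappa\leq 1$) also works, and is a legitimate alternative to the paper's route through $M>q_{w-2}$ and the bound $2q_{w-2}^{(\gamma+\zeta)/(1-\gamma)}$.

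The genuine gap is in the lower bound on $M$. You apply Birkhoff to $f^{(M+k)}(x+\delta_0)$ directly, which requires $|M+k|\geq N_0$ and backward Birkhoff estimates not built into the paper's set $A$ (you acknowledge both). Your proposal to dismiss the range $|M+k|<N_0$ ``by compactness of $P$'' does not hold up: there $k\approx -M$ and $f^{(M+k)}(y)$ is a sum of fewer than $N_0$ terms $f(T^iy)$, but since $f$ has a singularity and the orbit of $y$ is only kept at distance $\gtrsim q_w^{-(1-\gamma)}$ from it (and only for forward iterates in $[0,wq_w)$), $f^{(M+k)}(y)$ can be of order $N_0\,q_w^{|\gamma|(1-\gamma)}$, which is not small compared to $M\geq N/2$ once $w$ is taken large; so the hypothesis $|f^{(M)}(x)-f^{(M+k)}(y)-p|<2\eps$ does not force $|p|\approx M$ and no contradiction with the compactness of $P$ follows. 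The paper avoids the whole issue by writing $f^{(M+k)}(y)=f^{(M)}(y)+f^{(k)}(T^My)$ via the cocycle identity and using the \emph{trivial} bound $|f^{(k)}(T^My)|\geq |k|\inf_{\T}f$; together with forward Birkhoff for $f^{(M)}(x),f^{(M)}(y)$ (so $|f^{(M)}(x)-f^{(M)}(y)|<2\kappa M$), this gives $2\kappa M>|k|\inf_{\T}f-2\eps-|p|$, i.e.\ $M\gtrsim |k|/\kappa$, uniformly for all $k\neq 0$, with no exceptional range and no need for backward ergodic averages. You should replace your step 3 with this decomposition; once you do, the rest of your argument closes the proof.
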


\begin{proof}

Fix any $M\geq \frac{N}{2}$, any $p\in P$ and any $k\neq 0$ such that $\|x-y-k\alpha\|<\epsilon$.\\

 
Let $I=[M,M+R]$ be such that for $n\in [M,M+R]$, we have $|f^{(n)}(x)-f^{(n+k)}(y)-p|<2\epsilon$.\\
Note that since $\|x-y-k\a\|<\eps$ and $\|x-y\|<\delta<\eps$, then $|k|\geq \frac{1}{2c^3\epsilon}$.
By the cocycle identity and the triangle inequality, this implies that for every $n\in [0,R]$ 
$$|f^{(n)}(x+M\a)-f^{(n)}(y+M\a+k\a)|\leq 4\epsilon.$$
Therefore, be Lemma \ref{len}, \begin{equation}\label{bsd}R<2c (4\eps)^{1+\gamma}\|x-y+k\alpha\|^{\frac{-1}{1-\gamma}}\stackrel{\eqref{del3}}\leq 2c (4\eps)^{1+\gamma}|k|^\frac{1+\zeta}{1-\gamma}\leq |k|^\frac{1+\zeta}{1-\gamma}.\end{equation}
On the other hand, since $x \in W_0$ and $|f^{(n)}(x)-f^{(n+k)}(y)-p|<2\epsilon$ we get that 
\begin{equation}2\kappa M>|f^{(M)}(x)-f^{(M)}(y)|\geq |f^{(k)}(T^My)|-2\epsilon-p\geq |k|\inf_\T f-2\epsilon-p>\frac{1}{2}|k|\inf_\T f\geq \frac{\inf_\T f}{4c^3\eps}.
\end{equation}

Hence, and using \eqref{sho1} ($\epsilon$ is small enough), 
\begin{equation}\label{mf} M>\max(q_{w-2},\frac{1}{2}|k|).
\end{equation} 
Hence, by \eqref{bsd} and \eqref{mf},
$$\frac{R}{M}\leq  \frac{|k|^\frac{1+\zeta}{1-\gamma}}{\max(q_{w-2},\frac{|k|}{2})}\leq  2
q_{w-2}^{\frac{\gamma-\zeta}{1-\gamma}} 
\stackrel{\eqref{wu}}{<}\frac{a\kappa}{10}.$$

 This proves (\ref{naw}) for $k\neq 0$.
\end{proof}

\begin{lemma}\label{lm2}
We have that \eqref{naw} holds for $k=0$.
\end{lemma}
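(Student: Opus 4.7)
The plan is to combine the length bound provided by Lemma \ref{len} with the structural information on the drift $f^{(n)}(x)-f^{(n)}(y)$ given in \eqref{sho} of Lemma \ref{w0w}. For $k=0$ the hypothesis of \eqref{naw} reduces to $\|x-y\|=\delta_0$ and $|f^{(n)}(x)-f^{(n)}(y)-p|<2\epsilon$ for every $n\in[M,M+T]$, and we must derive $T/M<a\kappa/10$.

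First I would convert the hypothesis into a length bound for $T$. By the cocycle identity together with the triangle inequality, $|f^{(m)}(T^Mx)-f^{(m)}(T^My)|<4\epsilon$ for every $m\in[0,T]$. The points $T^Mx$ and $T^My$ are still at distance $\delta_0$, so Lemma \ref{len} applied with $\eta=4\epsilon$, combined with $\delta_0\geq 1/q_w^{1-\gamma}$ from Lemma \ref{delta}, yields
$$T \leq 2c(4\epsilon)^{1+\gamma}\,\delta_0^{-1/(1-\gamma)} \leq 2c(4\epsilon)^{1+\gamma}q_w.$$

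Next, I would localise $M$ using \eqref{sho}: for $n\in(i_0,wq_w)$ one has $f^{(n)}(x)-f^{(n)}(y)>|\gamma|d/2$, and since $|p|\leq|\gamma|d/4$ by \eqref{d}, the corresponding $|f^{(n)}(x)-f^{(n)}(y)-p|>|\gamma|d/4>2\epsilon$ contradicts the hypothesis. Hence $[M,M+T]\cap(i_0,wq_w)=\emptyset$, so either $M\geq wq_w$ or $M+T\leq i_0$. In the first case, combining the length bound with $w\geq 2/\kappa^2$ from \eqref{wu} gives
$$\frac{T}{M}\leq \frac{2c(4\epsilon)^{1+\gamma}}{w}\leq c(4\epsilon)^{1+\gamma}\kappa^2<\frac{a\kappa}{10}$$
for $\epsilon$ small. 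In the second case, I fall back on $M\geq N/2\geq 1/(2\epsilon^2\kappa^2)$ together with the length bound to get
$$\frac{T}{M}\leq \frac{2c(4\epsilon)^{1+\gamma}q_w}{N/2}\leq 4c(4\epsilon)^{1+\gamma}q_w\epsilon^2\kappa^2;$$
since $3+\gamma>0$, this bound lies below $a\kappa/10$ once $\epsilon$ is sufficiently small relative to the now-fixed quantities $q_w$ and $\kappa$.

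The main delicacy I anticipate is this second case, where $q_w$ can be substantial because $w\sim 1/\kappa^2$ and $\alpha$ has bounded partial quotients only up to the constant $c$. However, the section was set up so that $\epsilon$ is introduced as sufficiently small with its concrete constraints to be fixed during the proof, and the conditions extracted above add only finitely many new upper bounds on $\epsilon$ in terms of $\kappa$ and the remaining fixed parameters, which is perfectly consistent with the previous smallness requirements on $\epsilon$ imposed throughout Section \ref{abs}.
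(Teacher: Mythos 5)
Your reduction to Lemma \ref{len} via the cocycle identity is correct and matches the paper, and your Case 1 ($M\geq wq_w$) is handled in essentially the same way. The gap is in Case 2, which you flag as a delicacy but do not actually close.

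You use only the second half of \eqref{sho} (the lower bound $f^{(n)}(x)-f^{(n)}(y)>|\gamma|d/2$ for $i_0<n<wq_w$), which rules out only $I\cap(i_0,wq_w)=\emptyset$ and leaves open $M+T\leq i_0$. To exclude that case you want $\epsilon$ small relative to $q_w$. But $q_w$ is not a quantity that can be fixed before $\epsilon$: the integer $w$ is chosen to satisfy not only \eqref{wu} but also, implicitly through Lemmata \ref{delta} and the requirement $\delta_0=\delta_0^w<\delta$ in Proposition \ref{fini}, the constraint $1/q_w^{1-\gamma}\leq\delta_0^w<\delta$, where $\delta=\delta(\epsilon,N)$ comes from the assumed $R(t_0,P)$-property with $N\geq 1/(\epsilon^2\kappa^2)$. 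Shrinking $\epsilon$ may shrink $\delta$ and thus force $q_w$ up, so there is no uniform bound on $(4\epsilon)^{1+\gamma}q_w$; the "finitely many new upper bounds on $\epsilon$" you invoke would have to depend on $q_w$ and hence circularly on $\epsilon$ itself.

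The missing ingredient is the first half of \eqref{sho}. For $n\leq i_0$ one has $0<f^{(n)}(x)-f^{(n)}(y)<100c/d$, while by \eqref{d} every $p\in P$ satisfies $|p|\geq 100c/d$; with $d$ chosen so that $P$ clears $\pm 100c/d$ by a definite margin and $\epsilon$ small, this already yields $|f^{(n)}(x)-f^{(n)}(y)-p|>2\epsilon$ for $n\leq i_0$ too. Hence $I\cap[0,wq_w]=\emptyset$ outright, your Case 2 never occurs, and only Case 1 remains, where (as in the paper) $T/M\leq q_w/(\tfrac12 wq_w)=2/w\leq\kappa^2<a\kappa/10$ and no dependence on $q_w$ survives.
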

\begin{proof}
Fix any $M\geq \frac{N}{2}$, $p\in P$ and let $I=[M,M+R]$ be such that for every $n\in I$, $|f^{(n)}(x)-f^{(n)}(y)-p|<2\epsilon$. By \eqref{sho} and (\ref{d}), for every $n\in [0,wq_w]$ and every $p\in P$, $|f^{(n)}(x)-f^{(n)}(y)-p|>2\epsilon$. Therefore, for $M\leq \frac{1}{2} wq_w$, \eqref{naw} holds.
 It follows by Lemma \ref{len} applied to $x$ and $y$ (we have $\|x-y\|=\delta_0\stackrel{\eqref{del}}{\geq}\frac{1}{q_w^{1-\gamma}}$), that $R=|I|<2(4\epsilon)^{1+\gamma}\|x-y\|^{\frac{-1}{1-\gamma}} \leq 2(4\epsilon)^{1+\gamma} q_{w}<q_w$. Therefore, by (\ref{wu}), for $M>\frac{1}{2}wq_w$,
$$\frac{R}{M}\leq \frac{q_w}{\frac{1}{2}wq_w}<\frac{a\kappa}{10}.$$ 
So, (\ref{naw}) holds for $k=0$.
\end{proof}

We thus proved \eqref{naw} in Proposition \ref{fini}. Let us now complete the proof by proving Lemmatas \ref{delta} and \ref{w0w}.

\begin{proof}[Proof of Lemmata \ref{delta}] 
Fix $v\in \N$. To simplify the notations, we will write $\delta_0$ instead of $\delta_0^v$.  
 Given $u\in \N$, set 
\begin{equation}\label{bu}B_u:=\{\eta\in \T\;:\; d\left(\eta,\{i\alpha\}_{i=-q_u}^{q_u-1}\right)\geq \frac{1}{2u^2q_u}\}.\end{equation}
Let $t\in \N$ be unique such that
\begin{equation}\label{tw}\frac{1}{q_{t+2}}<\frac{1}{q_v^{1-\gamma}}\leq \frac{1}{q_{t+1}}.\end{equation}
 
Let $c_1=4c$, then $q_{t+1}\geq 4c q_{t-c_1}$ (since $t$ depends on $v$ which is sufficiently large, $t-c_1>v-4$ by \eqref{tw}).  
\begin{equation}\label{del2}\left[\frac{1}{q_{t+1}},\frac{2}{q_{t+1}}\right]\cap \left(\bigcap_{i\geq t-c_1}B_i\right).
\end{equation}
 We will show below that this set is not empty. Now, let $\delta_0$ be any number in this set. This, by the definition of $B_i$, will give \eqref{del'} and \eqref{del3}. Indeed, \eqref{del'} follows from \eqref{tw} and \eqref{del2}. To show \eqref{del3}, note that for $|k|<q_{t-c_1}$, 
$$\|\delta_0-k\alpha\|\geq \|k\a\|-\delta_0\stackrel{\eqref{del'},\eqref{tw}}{\geq} \frac{1}{2c|k|}-\frac{1}{q_{t+1}}\geq \frac{1}{2c|k|}-\frac{1}{4cq_{t-c_1}}\geq \frac{1}{|k|^{1+\zeta}},$$
since $|k|\geq \eps^{-\frac{1}{2}}$.
If
$|k|\geq q_{t-c_1}$, let $\ell$ be unique, such that $q_{\ell+1}>|k|\geq q_\ell$. By definition of  $ B_{\ell+1}$
 $$\|\delta_0-k\alpha\|\geq \sup_{|i|\leq q_{\ell+1}}\|\delta_0-i\alpha\|\geq \frac{1}{(\ell+1)^2q_{\ell+1}}\geq \frac{1}{|k|}^{1+\zeta},$$
where the last inequality follows if $v$ is sufficiently large (then by \eqref{tw}, $t$ is large, so $|k|$ is large and therefore $\ell$ is large).

\begin{lemmata}
\begin{equation}\label{indu} \left[\frac{1}{q_{t+1}},\frac{2}{q_{t+1}}\right]\cap \left(\bigcap_{i\geq t-c_1}B_i\right)\neq \emptyset.
\end{equation}
\end{lemmata}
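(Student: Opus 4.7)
The plan is to prove \eqref{indu} by showing the stronger fact that
$\lambda\bigl(\bigcup_{u \geq t - c_1} B_u^c \cap J\bigr) < \lambda(J) = \frac{1}{q_{t+1}}$,
where $J := [\frac{1}{q_{t+1}}, \frac{2}{q_{t+1}}]$, provided $v$ (hence $t$) is large enough. Once established, $J \cap \bigcap_{u \geq t-c_1} B_u$ has positive Lebesgue measure and is in particular non-empty.

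For each fixed $u \geq t - c_1$, the set $B_u^c$ is, by \eqref{bu}, a union of $2 q_u$ open intervals of length $\frac{1}{u^2 q_u}$ centered on the two-sided orbit segment $\{i\alpha : -q_u \leq i \leq q_u - 1\}$. The first step is to count how many of these centers fall in (a small neighborhood of) $J$. Using the bounded partial quotient assumption $q_{u+1} \leq c q_u$, the three-distance theorem yields a uniform lower bound of order $1/q_u$ on the gap between consecutive points of this orbit segment, so the number of centers lying in any interval of length $L$ is at most $(c+1) q_u L + 1$. Applying this with $L$ of order $1/q_{t+1}$ (the extra $1/(u^2 q_u)$ for the enlargement is negligible) yields, for some constant $C = C(c)$,
\begin{equation*}
\lambda(B_u^c \cap J) \leq \tfrac{C}{u^2 q_u} \ \text{ if } u \leq t+1, \qquad \lambda(B_u^c \cap J) \leq \tfrac{C}{u^2 q_{t+1}} \ \text{ if } u > t+1.
\end{equation*}

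To merge the two regimes I exploit the choice $c_1 = 4c$. Iterating the elementary bound $q_{w+2} \geq q_{w+1} + q_w \geq 2 q_w$ gives $q_{t+1} \geq 4 c \, q_{t - c_1}$ for $c \geq 1$ (the inequality asserted in the paper), and therefore $q_u \geq q_{t-c_1} \geq q_{t+1}/(4c)$ for every $u$ in the first regime. Substituting, every summand is dominated by $\frac{C'}{u^2 q_{t+1}}$ for a constant $C'$ depending only on $c$, whence
\begin{equation*}
\sum_{u \geq t - c_1} \lambda(B_u^c \cap J) \leq \frac{C'}{q_{t+1}} \sum_{u \geq t - c_1} \frac{1}{u^2} \leq \frac{C''}{(t - c_1) \, q_{t+1}}.
\end{equation*}
For $v$ (hence $t$) large enough this is strictly smaller than $|J| = 1/q_{t+1}$, which proves \eqref{indu}.

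The only delicate point is the uniform gap estimate for the two-sided orbit, for which I plan to invoke the three-distance theorem carefully (or equivalently a direct Ostrowski expansion); everything else is elementary summation combined with the Fibonacci-type growth of the $q_u$ (strengthened by bounded type). No fine analysis of $f$ is needed: the conclusion is a pure Diophantine statement about the rotation $R_\alpha$.
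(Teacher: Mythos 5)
Your approach is genuinely different from the paper's. The paper proves \eqref{indu} constructively, building by induction a nested sequence of closed intervals $E_k\subset \left[\frac{1}{q_{t+1}},\frac{2}{q_{t+1}}\right]\cap\bigcap_{i=t-c_1}^k B_i$ with $|E_k|\geq \frac{1}{c^{c_1+2}q_k}$. You instead use a union bound (Borel--Cantelli style), showing $\sum_{u\geq t-c_1}\lambda(B_u^c\cap J)<|J|$. Both are valid; yours is shorter and gives the stronger conclusion that the intersection has positive measure, whereas the paper's nested-interval construction avoids the gap estimate and works with only the lower bound $q_{t+1}\geq 4cq_{t-c_1}$.

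However, there is a concrete error in your merging step. You claim that $q_{t+1}\geq 4c\,q_{t-c_1}$ ``therefore'' gives $q_{t-c_1}\geq q_{t+1}/(4c)$. This implication is backwards: $q_{t+1}\geq 4c\,q_{t-c_1}$ gives $q_{t-c_1}\leq q_{t+1}/(4c)$, which is an upper bound on $q_{t-c_1}$ and does not allow you to dominate $\frac{1}{u^2 q_u}$ by $\frac{C'}{u^2 q_{t+1}}$. What you actually need is the \emph{lower} bound $q_{t-c_1}\geq q_{t+1}/C(c)$, and this follows from the bounded-partial-quotients hypothesis $q_{s+1}\leq c\,q_s$ iterated $c_1+1$ times, giving $q_{t+1}\leq c^{c_1+1}q_{t-c_1}$. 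With that replacement your summation goes through unchanged and the proof is correct. It is worth noting that this is exactly where your route depends on bounded type, whereas the paper's induction uses the bounded-type hypothesis in its own step $|E_{k+1}|\geq |E_k|/[4|E_k|q_{k+1}+1]-\frac{1}{(k+1)^2 q_{k+1}}\geq \frac{1}{c^{c_1+2}q_{k+1}}$, so the two arguments are using the same Diophantine input in slightly different guises.
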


\begin{proof} The proof goes by induction. We will show that for every $k\geq t-c_1$ there exists a closed interval $E_k\subset \left[\frac{1}{q_{t+1}},\frac{2}{q_{t+1}}\right]\cap \left(\bigcap_{i\geq t-c_1}^{k}B_i\right)$ and $E_{k+1}\subset E_k$. Moreover, we will show (for the induction purpose) that for every $k\geq t-c_1$, $|E_k|\geq \frac{1}{c^{c_1+2}q_k}$. Indeed, we have $\min_{-q_{t-c_1}+1\leq i\leq q_{t-c_1}-1}\|i\alpha\|\geq \frac{1}{2q_{t-c_1}}\geq \frac{2}{q_{t+1}}$. Set $E_{t-c_1}:=[\frac{1}{q_{t+1}},\frac{2}{q_{t+1}}-\frac{1}{2(t-c_1)^2q_{t-c_1}}]$. It follows by the fact that $t-c_1\geq v-4$ and $v$ is sufficiently large (by taking a bigger $c$, we may assume that $c>2$) that $$|E_{t-c_1}|=\frac{1}{q_{t+1}}-\frac{1}{2(t-c_1)^2q_{t-c_1}}\geq 
\frac{1}{c^{c_1+2}q_{t-c_1}}\left(c-\frac{1}{c}\right)\geq \frac{1}{c^{c_1+2}q_{t-c_1}}.
$$

 Moreover, since $t-c_1$ is sufficiently large, $\frac{1}{q_{t+1}}>\frac{1}{2(t-c_1)^2q_{t-c_1}}$. Moreover, $c_1\geq 4$ and therefore $\frac{2}{q_{t+1}}-\frac{1}{2(t-c_1)^2q_{t-c_1}}\leq \min_{i\in\{-q_{t-c_1},...,q_{t-c_1}-1\}}\|i\a\|-\frac{1}{2(t-c_1)^2q_{t-c_1}}$. Therefore, by definition of $E_{t-c_1}$ and $B_{t-c_1}$,

$$E_{t-c_1}\subset B_{t-c_1}= \bigcap_{i=t-c_1}^{t-c_1}B_i.$$

Suppose that for some $k\geq t-c_1$ we have a closed interval $E_k\subset \left[\frac{1}{q_{t+1}},\frac{2}{q_{t+1}}\right]\cap \left(\bigcap_{i= t-c_1}^{k}B_i\right)$ such that  $|E_k|\geq \frac{1}{c^{c_1+2}q_k}$.
It follows that
$$E_k\cap\{i\alpha\}_{i=-q_k}^{q_k-1}=\emptyset.$$ 
Let $E_{k+1}\subset E_k$ be the longest closed subinterval (in $E_k$) such that
\begin{equation}\label{lon} E_{k+1}\cap B^c_{k+1}=\emptyset. 
\end{equation}
It follows that $E_{k+1}\subset E_k\subset\left[\frac{1}{q_{t+1}},\frac{2}{q_{t+1}}\right]$, and by (\ref{lon}), $E_{k+1}\subset \left(\bigcap_{i= t-c_1}^{k+1}B_i\right)$ ($E_{k+1}\subset E_k$). It remains to prove that $|E_{k+1}|\geq \frac{1}{c^{c_1+2}q_{k+1}}$. To do this note that
$$|E_{k+1}|\geq \frac{|E_k|}{\left[4|E_k|q_{k+1}+1\right]}-\frac{1}{(k+1)^2q_{k+1}}.$$ 
Indeed, $|E_k\cap\{i\alpha\}_{-q_{k+1}}^{q_{k+1}-1}|\leq 4|E_k|q_{k+1}$ and around each point of the form $i\alpha$, $i=-q_{k+1},...,q_{k+1}-1$, we discard an interval of length $\frac{1}{(k+1)^2q_{k+1}}$ (see \eqref{bu}, for $u=k+1$). We use the induction assumption, the fact that $k+1\geq t-c_1\geq v-4$ (and $v$ is sufficiently large) to obtain 
$$\frac{|E_k|}{\left[4|E_k|q_{k+1}+1\right]}-\frac{1}{(k+1)^2q_{k+1}}\geq\frac{1}{c^{c_1+2}q_{k+1}}.$$
Hence (\ref{indu}) is proved. \end{proof}
The proof of Lemmata \ref{delta} is thus finished.
\end{proof}

\begin{proof}[Proof of Lemmata \ref{w0w}]
We will determine the set $W^w_0$. To simplify notation, we will write $W_0$ instead of $W^w_0$, the dependence on $w$ will be clear from the context.
Let $l\in \N$ be such that
\begin{equation}\label{quwu} \left(\frac{q_{w-l}}{q_w}\right)^{-\gamma+1}\leq\frac{1}{d}<\left(\frac{q_{w-l+1}}{q_w}\right)^{-\gamma+1},
\end{equation}
since $w$ is large, $l\geq 1$.
Set 
$$W_{0,1}:=\{x\in \T\;:\; \{x,x+\alpha,...,x+(wq_w-1)\alpha\}\cap[-\frac{2c}{q_w^{1-\gamma}},\frac{2c}{q_w^{1-\gamma}}]=\emptyset\},$$
$$W_{0,2}:=\{x\in \T\;:\; \{x,x+\alpha,...,x+(q_{w-l}-1)\alpha\}\cap[\frac{1}{2dq_w},\frac{1}{dq_w}]\neq\emptyset\}.$$
We have $\lambda(W_{0,1})\geq 1-wq_w\frac{4c}{q_w^{1-\gamma}}=1-\frac{4cw}{q_w^{-\gamma}}$.\\
As $l\geq 1$, for $i=0,...,q_{w-l}-1$, the sets $T^i[\frac{1}{2dq_w},\frac{1}{dq_w}]$ are pairwise disjoint. Therefore, by (\ref{quwu}) 
$$\lambda(W_{0,2})=q_{w-l}\frac{1}{2dq_w}\geq \frac{1}{c} \frac{q_{w-l+1}}{q_w}\frac{1}{2d}\geq \frac{1}{2c}\left(\frac{1}{d}\right)^{\frac{1}{1-\gamma}}\frac{1}{d}=
\frac{1}{2c}\left(\frac{1}{d}\right)^{\frac{2-\gamma}{1-\gamma}}.$$
Now we set
\begin{equation}\label{w0} W_0:=W_{0,1}\cap W_{0,2}.
\end{equation}
Since $w\geq w_0$ is sufficiently large, $\lambda(W_0)\geq c_0$, where $c_0=c_0(d)>0$.
We may assume that
 $$W_0\subset A\cap (A-\delta_0),$$
if not we take $W_0:=W_0\cap A\cap (A-\delta_0)$ and use the fact that $\lambda(A)>1-\epsilon$, and $\delta_0<\delta$ is small. This gives \eqref{wo}.
Note that by \eqref{w0} and the definition of $W_{0,2}$, \eqref{moz} follows.
Let us show \eqref{sho1}.

 By \eqref{moz}, we have 
\begin{equation}\label{moz2}
\{x,x+\alpha,...,x+(q_{w-2}-1)\alpha\}\cap [0,\frac{1}{2dq_w}]=\emptyset.
\end{equation}

Therefore, using (\ref{del}) and $\|x-y\|=\delta_0$, we have for every $i=0,...,q_{w-2}-1$
\begin{equation}\label{moz3}
T^i([x,y])\cap [0,\frac{1}{6dq_w}]=\emptyset.
\end{equation}
By \eqref{moz3}, for $i=0,...,q_{w-2}-1$, $0\notin [x+i\a,y+i\a]$, and therefore
$|f^{(i)}(x)-f^{(i)}(y)|\leq |f^{(q_{w-2})}(x)-f^{(q_{w-2})}(y)|$.
Therefore and by (\ref{moz3}), (\ref{del}), monotonicity of $f'$, (\ref{moz3}), Lemma \ref{koksi} (to $h=f'$, and some $\theta\in [x,y]$), it follows that for $n\leq q_{w-2}$ 
\begin{multline}|f^{(n)}(x)-f^{(n)}(y)|\leq |f^{(q_{w-2})}(x)-f^{(q_{w-2})}(y)|\leq \|x-y\||f'^{(q_{w-2})}(\theta)|\leq \frac{2c}{q_{w}^{1-\gamma}}26dq_{w}^{1-\gamma}\leq\\
 100dc,\end{multline}
and \eqref{sho1} follows.\\
\\

Now we will show \eqref{sho}.

Moreover, for $x\in W_0\subset W_{0,1}$, and $y:=x+\delta_0$, $0\notin [x+i\a,y+i\a]$ for $i=0,...,wq_w-1$
\begin{equation}\label{seq}(f^{(n)}(x)-f^{(n)}(y))_{n=0}^{wq_w-1}\text{ is an increasing sequence.} 
\end{equation}

By the fact that $i_0<q_{w-l}$ and (\ref{moz})
it follows that $\|x+j\alpha-0\|>\frac{1}{2q_{w-l}}$ for $j\neq i_0$, $0\leq j\leq q_{w-l}-1$. Moreover, since $w\geq w_0$ is large enough, $\|x-y\|=\delta_0\leq\frac{2c}{q^{1-\gamma}_{w}}<\frac{1}{6q_w}$, and we obtain
 \begin{equation}\label{xj}
\|x+j\alpha\|,\|y+j\alpha\|>\frac{1}{6q_{w-l}}
\text{ for }j=0,...,q_{w-l}-1,j\neq i_0.
\end{equation}
 Moreover, by (\ref{seq}), for $n\leq i_0$

\begin{equation}\label{ela}0<f^{(n)}(x)-f^{(n)}(y)\leq (f^{(q_{w-l})}(x)-f^{(q_{w-l})}(y))-(f(x+i_0\alpha)-f(y+i_0\alpha)).
\end{equation}
Let us consider 

$$
\bar{f}(x)=\left\{\begin{array}{ccc}
f(x) &\mbox{if} & x>\frac{1}{6q_{w-l}};\\
0&\mbox{if}& \text{otherwise.}\end{array}\right.
$$
Hence, by (\ref{xj}) 
\begin{equation}\label{bf}f^{(q_{w-l})}(x)-f(x+i_0\alpha)=\bar{f}^{(q_{w-l})}(x)\text{ and }f^{(q_{w-l})}(y)-f(y+i_0\alpha)=\bar{f}^{(q_{w-l})}(y). 
\end{equation}
By $0\notin [x+i\a,y+i\a]$ for $i=0,...,wq_w-1$ there exists $\theta\in [x,y]$ such that $\bar{f}^{(q_{w-l})}(x)-\bar{f}^{(q_{w-l})}(y)=\|x-y\||\bar{f}'^{(q_{w-l})}(\theta)|$. But as in Lemma \ref{koksi} we get the following:
$$|\bar{f}'^{(q_{w-l})}(\theta)|\leq q_{w-l}(2q_{w-l})^{-\gamma}+ 2|\gamma|(2q_{w-l})^{1-\gamma}+ |\gamma|(6q_{w-l})^{1-\gamma}\leq 46q_{w-l}^{1-\gamma}.$$
Therefore, using (\ref{ela}) and (\ref{bf}), (\ref{del}) and (\ref{quwu}), for every $n\leq i_0$, we have 
$$0<f^{(n)}(x)-f^{(n)}(y)\leq \|x-y\||\bar{f}'^{(q_{w-l})}(\theta)|\leq 46q_{w-l}^{1-\gamma}\frac{2c}{q_w^{1-\gamma}}<\frac{100c}{d}.$$
For $wq_w> n>i_0$, by \eqref{seq} and monotonicity of $f'$, we have  for some $\theta_0\in [x+i_0\alpha,y+i_0\alpha]\stackrel{(\ref{moz}),(\ref{del})}{\subset}[0,\frac{2}{dq_w}]$, and
$$f^{(n)}(x)-f^{(n)}(y)\geq f(x+i_0\alpha)-f(y+i_0\alpha)=\|x-y\||f'(\theta_0)|\geq \frac{1}{q_w^{1-\gamma}}|\gamma|(\frac{dq_w}{2})^{1-\gamma}> \frac{|\gamma|d}{2}.$$
This finishes the proof of \eqref{sho}.

The proof of Lemmata \ref{w0w} is complete. \end{proof}

This finishes the proof of Theorem \ref{aby}.

\appendix

\section{Proof of Proposition \ref{propEE}} \label{App.A} 
\label{EE}
Consider the Gauss map $T:[0,1)\to [0,1)$, $Tx:=\left\{\frac{1}{x}\right\}$, $T(0)=0$, and let $\mu$ be its invariant probability measure given by its density with respect to the Lebesgue measure $ \frac{1}{\log 2} \frac{1}{1+x} dx$.
\begin{lemma}\label{nize} There exists a constant $C>0$ such that for every $a\in \T$ and for every $k\neq l\in \N$
\begin{equation}\label{niezal}\mu\left(T^{-k}((0,a))\cap T^{-l}((0,a))\right)\leq C\mu((0,a))^2.
\end{equation}
\end{lemma}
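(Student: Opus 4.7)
The plan is, by $T$-invariance of $\mu$, to reduce the estimate to showing
\[
\mu(A\cap T^{-n}A)\leq C\mu(A)^2
\]
uniformly in $n\geq 1$ and in the endpoint $a$, where $A=(0,a)$; indeed $\mu(T^{-k}A\cap T^{-l}A)=\mu(A\cap T^{-|l-k|}A)$ for $k\neq l$. I will restrict to $a\leq 1/2$, since for $a\geq 1/2$ the measure $\mu(A)$ is bounded below and the inequality becomes trivial.

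Next I would exploit the rank-$n$ cylinder partition of the Gauss map. At level $n$, the interval $(0,1)$ is the disjoint union (modulo null sets) of cylinders $J_{i_1,\ldots,i_n}=\{x:a_j(x)=i_j,\,1\leq j\leq n\}$, on each of which $T^n$ is a monotone diffeomorphism onto $(0,1)$. By Renyi's bounded distortion theorem there is a constant $D>0$, uniform in $n$, such that $m(J\cap T^{-n}E)\leq Dm(J)m(E)$ for every Borel $E\subset(0,1)$ and every rank-$n$ cylinder $J$; since the density $\rho=d\mu/dm$ is bounded above and below, this transfers to $\mu(J\cap T^{-n}E)\leq D'\mu(J)\mu(E)$. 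I would then decompose
\[
\mu(A\cap T^{-n}A)=\sum_{J\subset A}\mu(J\cap T^{-n}A)+\mu(J_a\cap A\cap T^{-n}A),
\]
where $J_a$ is the unique rank-$n$ cylinder containing $a$ (cylinders disjoint from $A$ contribute nothing). The interior sum is immediately at most $D'\mu(A)\sum_{J\subset A}\mu(J)\leq D'\mu(A)^2$, so only the boundary term requires work.

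The key observation for the boundary term is that $J_a$ is automatically of quadratic size in $a$. If $a=[0;i_1,i_2,\ldots]$ and $J_a=[i_1,\ldots,i_n]$, then $a\in(1/(i_1+1),1/i_1)$ forces $i_1=\lfloor 1/a\rfloor\geq 1/(2a)$, and the denominator $q_n$ of the convergent $[0;i_1,\ldots,i_n]$ satisfies $q_n\geq q_1=i_1$. Hence the Lebesgue length of $J_a$ equals $1/(q_n(q_n+q_{n-1}))\leq 1/q_n^2\leq 4a^2$. Since $\rho$ is bounded, $\mu(J_a)\leq C''a^2\leq C'''\mu(A)^2$, so the boundary contribution is controlled trivially by $\mu(J_a\cap A\cap T^{-n}A)\leq\mu(J_a)\leq C'''\mu(A)^2$. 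Combining both contributions gives $\mu(A\cap T^{-n}A)\leq(D'+C''')\mu(A)^2$, which is the claim.

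The main conceptual content is the observation that the rank-$n$ cylinder containing $a$ has Lebesgue length $O(a^2)$ for every $n\geq 1$, because its very first partial quotient must equal $\lfloor 1/a\rfloor$. Once this is recognized, the remainder is routine bounded-distortion bookkeeping, and I expect no essential obstacle.
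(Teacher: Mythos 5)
Your proof is correct, and it takes a genuinely different route from the paper's. The paper avoids invoking any distortion machinery: it reduces to $\mu\bigl((0,a)\cap T^{-(l-k)}(0,a)\bigr)$, writes $T^{-(l-k-1)}(0,a)$ as a countable disjoint union of intervals $(c_i,d_i)$, and then proves a \emph{one-step} decoupling inequality $\mu\bigl(T^{-1}(c_i,d_i)\cap(0,a)\bigr)\leq C\mu\bigl((c_i,d_i)\bigr)\mu\bigl((0,a)\bigr)$ by hand, using the explicit branch structure $T^{-1}(c,d)=\bigcup_j\left(\tfrac{1}{d+j},\tfrac{1}{c+j}\right)$ and a direct summation; summing over $i$ and using $T$-invariance closes the argument. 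You instead decompose the \emph{other} side of the intersection, partitioning $(0,a)$ into rank-$n$ cylinders (with $n=|l-k|$), invoke Renyi's uniform bounded distortion for the $n$-fold inverse branches to handle the interior cylinders wholesale, and then dispose of the single boundary cylinder $J_a$ via the pleasant observation that its length is automatically $O(a^2)$ because its first digit is $\lfloor 1/a\rfloor$, forcing $q_n\geq\lfloor 1/a\rfloor$. Both proofs are sound; the paper's is more self-contained and elementary (needing no black box), while yours is more structural and transparently exhibits the decoupling as a Gibbs/bounded-distortion phenomenon, with the boundary-cylinder estimate being the one genuinely new piece of bookkeeping needed to make the cylinder decomposition close up.
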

\begin{proof}  Assume that $l>k$. Then ($\mu$ is $T$-invariant)
$$\mu\left(T^{-k}((0,a))\cap T^{-l}((0,a))\right)=\mu\left((0,a)\cap T^{k-l}(0,a)\right).$$
Note that $T^{k-l+1}(0,a)=\bigcup_{i=1}^{+\infty}(c_i,d_i)$ for some disjoint intervals $(c_i,d_i)$ $i=1,...,+\infty$. We will prove that for every $i\in \N$
\begin{equation} \label{eq0} \mu(T^{-1}(c_i,d_i)\cap (0,a))\leq C\mu((c_i,d_i))\mu((0,a)) \end{equation}
which implies \eqref{niezal} since 
$$\mu\left((0,a)\cap T^{k-l}(0,a)\right)\leq C\mu((0,a))\mu(\bigcup_{i=1}^{+\infty}(c_i,d_i))
=C\mu(T^{k-l+1}(0,a))\mu((0,a))=C\mu((0,a))^2.$$

To prove \eqref{eq0}, note that $T^{-1}(c_i,d_i)=\bigcup_{j=1}^{+\infty}(\frac{1}{d_i+j},\frac{1}{c_i+j})$. It follows that 
$$
\sum_{j=1}^{+\infty} (\frac{1}{d_i+j},\frac{1}{c_i+j})\cap (0,a)\subset \bigcup_{j\geq \frac{1}{a}-d_i}(\frac{1}{d_i+j},\frac{1}{c_i+j}).
$$
Therefore, 
\begin{multline*}
\mu(T^{-1}(c_i,d_i)\cap (0,a))\leq \sum_{j\geq \frac{1}{a}-d_i}\mu((\frac{1}{d_i+j},\frac{1}{c_i+j}))\\ \leq C \sum_{j\geq \frac{1}{a}-d_i} \frac{c_i-d_i}{(c_i+j)(d_i+j)} \leq  C\mu((0,a))\mu((c_i,d_i))
\end{multline*}
for some constant $C>0$ (since the density function $f(x)=\frac{1}{1+x}$ is bounded from above and below on $[0,1]$). This completes the proof. \end{proof}

\begin{proposition} Let $d>0$ and set
$$\mathcal{A}:=\left\{x=[0;a_1,...]\;:\: \exists_{N_0=N_0(x)}\forall_{n\geq N_0} \left|\{k\in [n^2,(n+1)^2]\;:\; a_{k}\geq dk^{\frac{7}{8}}\}\right|<2\right\}.$$
Then $\lambda(\mathcal{A})=1$.
\end{proposition}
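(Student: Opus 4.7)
The plan is to apply a Borel--Cantelli argument for the Gauss map $T$, exploiting the quasi-independence provided by Lemma~\ref{nize}.

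First I would translate the condition $a_k(x)\geq dk^{7/8}$ into a dynamical condition on $T$. Writing $a_k(x)=a_1(T^{k-1}x)$ and $\{a_1\geq N\}=(0,1/N]$, we obtain
\[
A_k:=\{x : a_k(x)\geq dk^{7/8}\}=T^{-(k-1)}\!\bigl(0,\tfrac{1}{dk^{7/8}}\bigr].
\]
Since the Gauss density is bounded above and below on $[0,1]$, $\mu(A_k)\asymp k^{-7/8}$. Setting
\[
B_n:=\bigl\{x : |\{k\in[n^2,(n+1)^2] : x\in A_k\}|\geq 2\bigr\},
\]
the claim $\lambda(\mathcal{A})=1$ is equivalent to $\sum_n \mu(B_n)<\infty$ via Borel--Cantelli (using the equivalence of $\mu$ and $\lambda$).

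The key step is to estimate $\mu(A_k\cap A_l)$ for $k<l$ in $[n^2,(n+1)^2]$. Since $\tfrac{1}{dl^{7/8}}\leq \tfrac{1}{dk^{7/8}}$, we have
\[
A_k\cap A_l\subset T^{-(k-1)}\!\bigl(0,\tfrac{1}{dk^{7/8}}\bigr]\cap T^{-(l-1)}\!\bigl(0,\tfrac{1}{dk^{7/8}}\bigr],
\]
so Lemma~\ref{nize} (applied with $a=\tfrac{1}{dk^{7/8}}$) gives
\[
\mu(A_k\cap A_l)\leq C\,\mu\!\bigl((0,\tfrac{1}{dk^{7/8}}]\bigr)^2\leq \tfrac{C'}{k^{7/4}}.
\]
Summing over the $\binom{(2n+1)}{2}\lesssim n^2$ pairs $k<l$ in $[n^2,(n+1)^2]$ and using $k\geq n^2$, we get
\[
\mu(B_n)\leq \sum_{k<l\in[n^2,(n+1)^2]}\mu(A_k\cap A_l)\leq C''\cdot n^2\cdot n^{-7/2}=C''n^{-3/2}.
\]
Hence $\sum_n\mu(B_n)<\infty$, and by Borel--Cantelli, $\mu$-a.e.\ (and thus $\lambda$-a.e.) $x$ belongs to only finitely many $B_n$, which is exactly the definition of $\mathcal{A}$.

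The only substantive point is the decorrelation estimate for $\mu(A_k\cap A_l)$; everything else is routine counting and Borel--Cantelli. Lemma~\ref{nize} is tailored to precisely this situation, so I do not expect any real obstacle.
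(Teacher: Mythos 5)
Your proof is correct and follows essentially the same route as the paper: translate $a_k\geq dk^{7/8}$ into a condition on iterates of the Gauss map, bound $\mu(B_n)$ by summing the pairwise-intersection estimate of Lemma~\ref{nize} over the $O(n^2)$ pairs in $[n^2,(n+1)^2]$ to get $\mu(B_n)\lesssim n^{-3/2}$, and conclude by Borel--Cantelli and the equivalence of $\mu$ and $\lambda$. Your index bookkeeping ($a_k(x)=a_1(T^{k-1}x)$, hence $A_k=T^{-(k-1)}(0,1/(dk^{7/8})]$) is in fact slightly cleaner than the paper's, which has a harmless off-by-one in the formula for $T^k x$.
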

\begin{proof}
 We will prove that $\lambda(\mathcal{A}^c)=0$. To do this we will prove that $\mu(\mathcal{A}^c)=0$ ($\lambda$ and $\mu$ are equivalent.). Note that for $k\in \N$ if $x=[0;a_1,...,]$ is the continued fraction of $x$, then $T^k(x)=\frac{1}{a_k+\frac{1}{a_{k+1}+\cdots}}$. Therefore $\mathcal{B}\subset\mathcal{A}$, where  
$$\mathcal{B}:=\left\{x\in \T\;:\: \exists_{N_0=N_0(x)}\forall_{n\geq N_0} \left|\{k\in [n^2,(n+1)^2]\;:\; T^kx\leq \frac{1}{dk^{\frac{7}{8}}}\}\right|<2\right\}.$$
We will prove that $\mu(\mathcal{B}^c)=0$. To do this note that 
\begin{equation}\label{mB}\mathcal{B}^c=\bigcap_{N_0=1}^{+\infty}\left(\bigcup_{n\geq N_0}B_n\right),\end{equation}
where $B_n:=\left\{x\in \T\;:\; \left|\{k\in [n^2,(n+1)^2]\;:\; T^kx\leq \frac{1}{dk^{\frac{7}{8}}}\right|\geq 2\right\}$. Moreover,
\begin{equation}\label{bn}B_n\subset \bigcup_{i_1\neq i_2\in [n^2,(n+1)^2]}B^{n}_{i_1,i_2},\end{equation}
 where $B^n_{i_1,i_2}:=\{x\in \T\;:\; T^{i_1}x,T^{i_2}x\in (0,\frac{1}{dn^{\frac{7}{4}}}]\}$. Let us note that 
$$B^n_{i_1,i_2}=T^{-i_1}((0,\frac{1}{dn^{\frac{7}{4}}}))\cap T^{-i_2}((0,\frac{1}{dn^{\frac{7}{4}}})).$$
By (\ref{niezal}) from Lemma \ref{nize}, we get that $\mu(B^n_{i_1,i_2})\leq  {C}{n^{-\frac{7}{2}}}$. Therefore, using (\ref{bn}) and summing up over all $i_1\neq i_2\in [n^2,(n+1)^2]$, we get that 
$\mu(B_n)\leq C{n^{-\frac{3}{2}}}$.  This and (\ref{mB}) yield
$$\mu(\mathcal{B}^c)=\lim_{N_0\to +\infty}\mu\left(\bigcup_{n\geq N_0}B_n\right)=0.$$
This finishes the proof. \end{proof}

\begin{lemma} Let $\alpha \in \mathcal{A}$. Then $\sum_{s\notin K_\alpha}\frac{1}{\log^{\frac{7}{8}}q_s}<+\infty$.
\end{lemma}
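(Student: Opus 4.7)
The plan is to translate the condition $s\notin K_\alpha$ into a pointwise lower bound on the partial quotient $a_{s+1}$, and then exploit the sparseness of large partial quotients provided by $\alpha\in\mathcal{A}$.

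First I would use the continued fraction recurrence $q_{s+1}=a_{s+1}q_s+q_{s-1}$ to rewrite the defining condition $q_{s+1}\geq q_s\log^{7/8}q_s$ of ``$s\notin K_\alpha$'' as
\begin{equation*}
a_{s+1}\;\geq\;\log^{7/8}q_s\;-\;\frac{q_{s-1}}{q_s}\;\geq\;\log^{7/8}q_s-1.
\end{equation*}
Next I would invoke the Fibonacci-type lower bound $q_s\geq F_s\geq c\,\phi^s$, where $\phi=(1+\sqrt{5})/2$, valid for every irrational $\alpha$ and all $s$. This yields $\log q_s\geq c_1 s$ for some absolute constant $c_1>0$ and $s$ large, and hence an inequality of the shape $a_{s+1}\geq c_2(s+1)^{7/8}$ for all $s\geq S_0$, with $c_2>0$ an absolute constant.

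Now the parameter $d$ in the definition of $\mathcal{A}$ is arbitrary and $\lambda(\mathcal{A})=1$ holds for every $d>0$ by the preceding proposition. So without loss of generality I would pick $d\leq c_2$ (replacing $\mathcal{A}$ by the corresponding $\mathcal{A}_d$ if necessary). For $\alpha\in\mathcal{A}$ and every $n\geq N_0(\alpha)$, the set $\{k\in[n^2,(n+1)^2]: a_k\geq dk^{7/8}\}$ has cardinality $\leq 1$; substituting $k=s+1$, this means there is at most one $s\notin K_\alpha$ lying in $[n^2-1,(n+1)^2-1]$. For any such $s$, $s\geq n^2-1$, and the Fibonacci bound gives $\log^{7/8}q_s\geq c_3\, n^{7/4}$.

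To conclude I would sum over $n$, bounding
\begin{equation*}
\sum_{s\notin K_\alpha,\, s\geq N_0^2}\frac{1}{\log^{7/8}q_s}\;\leq\;\sum_{n\geq N_0}\frac{1}{c_3\,n^{7/4}}\;<\;+\infty,
\end{equation*}
which proves the claim. There is no serious obstacle in this argument: the only thing to be careful about is the matching of the constant $c_2$ coming from the Fibonacci-type growth of $q_s$ with the free parameter $d>0$ in the definition of $\mathcal{A}$, and this is harmless precisely because $\lambda(\mathcal{A}_d)=1$ for all $d>0$. The whole lemma is essentially a bookkeeping exercise combining the trivial recurrence for $q_s$, the uniform exponential lower bound on $q_s$, and the statistical control on large partial quotients granted by $\mathcal{A}$.
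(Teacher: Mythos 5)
Your argument is correct and is essentially the paper's proof: both translate $s\notin K_\alpha$ via the recurrence $q_{s+1}=a_{s+1}q_s+q_{s-1}$ into $a_{s+1}\gtrsim\log^{7/8}q_s$, use the exponential lower bound $q_s\gtrsim\phi^s$ to convert this into $a_{s+1}\gtrsim s^{7/8}$, invoke $\alpha\in\mathcal{A}$ to get at most one such index per block $[n^2,(n+1)^2]$, and sum $\sum_n n^{-7/4}$. Your handling of the free parameter $d$ and of the shift $k=s+1$ is slightly more explicit than the paper's, but the approach is the same.
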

\begin{proof} Let $N_0:=N_0(\alpha)$ be the number resulting from the fact that $\alpha\in \mathcal{A}$. 
We will prove that $\sum_{s\notin K_\alpha, s\geq N_0}\frac{1}{\log^{\frac{7}{8}}(q_s)}<+\infty$.
There exists a constant $d>0$ such that for any $s\in \N$
\begin{equation}\label{exp}\log(q_s)\geq (2d)^{\frac{8}{7}}s
\end{equation}
(indeed, the sequence $(q_s)_{s=1}^{+\infty}$ grows exponentially fast). Let $s\notin K_\alpha$, $s\geq N_0$. Then 
$$a_{s+1}q_s+q_{s-1}=q_{s+1}\geq q_s\log^{\frac{7}{8}}q_s,$$
and therefore, for $s\notin K_\alpha$, by (\ref{exp}) 
\begin{equation}\label{as}a_{s+1}\geq (\ln(q_s))^{\frac{7}{8}}-1\geq ds^{\frac{7}{8}}.
\end{equation}
 Since $\alpha\in \mathcal{A}$, for every $k\geq 1$ in every interval of the form $[(N_0+k)^2,(N_0+k+1)^2]$ there is at most one $s$ such that $a_{s+1}\geq ds^{\frac{7}{8}}\geq (d(N_0+k)^{\frac{7}{8}})$. 
Therefore 
$$\sum_{s\notin K_\alpha,s\geq N_0} \frac{1}{\log^{\frac{7}{8}}q_s}\leq 2d\sum_{s\notin K_\alpha ,s\geq N_0}\frac{1}{s^{\frac{7}{8}}}\leq 2d \sum_{k\geq 1} \frac{1}{(N_0+k)^{2\frac{7}{8}}}=2d\sum_{k\geq 1} \frac{1}{(N_0+k)^{\frac{7}{4}}}<+\infty.$$
This finishes the proof. 
\end{proof}

Hence we proved that $\mathcal{A}\subset \mathcal{E}$ and $\lambda(\mathcal A)=1$,  therefore  $\lambda(\mathcal{E})=1$. \hfill $\square$\\

\end{document}